\newenvironment{pfn}{\noindent{\em Proof}}{\rule{2mm}{2mm}\medskip}
\newcommand{\e}{\varepsilon}
\newcommand{\R}{\mathbb{R}}
\newcommand{\weakto}{\rightharpoonup}
\DeclareMathOperator{\supp}{supp}
\renewcommand{\a }{\alpha }
\renewcommand{\b }{\beta }
\renewcommand{\d }{\delta }
\newcommand{\g }{\gamma }
\renewcommand{\l }{\lambda}
\newcommand{\n }{\nabla }
\newcommand{\G}{\Gamma}
\renewcommand{\S}{\Sigma}
\newcommand{\N}{\mathbb{N}}
\def\bbm[#1]{\mbox{\boldmath $#1$}}
\newcommand{\beq }{\begin{equation}}
\newcommand{\eeq }{\end{equation}}
\newcommand{\dis}{\displaystyle}
\newtheorem{theorem}{Theorem}[section]
\newtheorem{lemma}[theorem]{Lemma}
\newtheorem{proposition}[theorem]{Proposition}
\newtheorem{remark}[theorem]{Remark}
\title[Conformal metrics with prescribed gaussian and geodesic curvatures]
{Conformal metrics with prescribed gaussian and geodesic curvatures}
\author{Rafael L\'{o}pez-Soriano}
  \address{Rafael L\'{o}pez-Soriano \\
    Universitat de Val\`encia\\
    Departamento de An\'alisis Matem\'atico\\
    Dr. Moliner 50 \\
    46100 Burjassot (Valencia), Spain.}
  \email{rafael.lopez-soriano@uv.es}
\author{Andrea Malchiodi}
  \address{Andrea Malchiodi \\ 
Scuola Normale Superiore\\
Piazza dei Cavalieri, 7\\
56126, Pisa, Italy.}
  \email{andrea.malchiodi@sns.it}
\author{David Ruiz}
  \address{David Ruiz \\
    Universidad de Granada\\
    Departamento de An\'alisis Matem\'atico\\
    Campus Fuentenueva\\
    18071 Granada, Spain}
  \email{daruiz@ugr.es}
\thanks{R. L.-S. and D. R. have been supported by the FEDER-MINECO Grant MTM2015-68210-P and by J. Andalucia (FQM-116). A.M. has been supported by the project {\em Geometric Variational Problems} and {\em Finanziamento a supporto della ricerca di base} from Scuola Normale Superiore and by MIUR Bando PRIN 2015 2015KB9WPT$_{001}$.  He is also member of GNAMPA as part of INdAM.     }
\keywords{Prescribed curvature problem, conformal metric, variational methods, blow-up analysis.}
\subjclass[2010]{35J20, 58J32.}
\begin{document}

\begin{abstract}
We consider the problem of prescribing the Gaussian and the geodesic curvatures of a compact surface with boundary by a conformal deformation of the 
metric. 
We derive some existence results using a variational approach, either by minimization of 
the Euler-Lagrange energy or via min-max methods. One of the main tools in our approach 
is a blow-up analysis of solutions, which in the present setting can have diverging volume. To our 
knowledge, this is the first time in which such an aspect is treated.  Key ingredients in our arguments 
are: a blow-up analysis around a sequence of points different from local maxima; the use of holomorphic domain-variations; and Morse-index estimates.

\end{abstract}

\maketitle

\section{Introduction}
\setcounter{equation}{0}

A classical problem in Geometry is the prescription of  the Gaussian curvature on a compact Riemannian surface $\Sigma$ under a conformal change of the metric, dating back to \cite{Ber, KazWar}. Denote by $\tilde{g}$ the original metric, by $g$ the conformal one, and by $e^u$ the conformal factor (that is, $g = e^u \tilde{g}$). The curvature then transforms according to the law: 

$$ - \Delta u + 2 \tilde{K}(x) = 2 K(x)e^{u},$$
where $\Delta=\Delta_{\tilde{g}}$ stands for the Laplace-Beltrami operator associated to the metric $\tilde{g}$, and $\tilde{K}$, $K$ stand for the Gaussian curvatures with respect to $\tilde{g}$ and $g$, respectively. The solvability of this equation has been studied for several decades, and it is not possible to give here a comprehensive list of references.  We refer the interested reader to Chapter 6 in the book \cite{aubin-book}.

If $\Sigma$ has a boundary, other than the Gaussian curvature in $\S$ it is natural to prescribe   also the geodesic curvature on $\partial \S$. 
Denoting by $\tilde{h}$ and $h$ the geodesic curvatures of the boundary with respect to $\tilde{g}$ and $g$ respectively, we 
are led to the  boundary value problem:

\begin{equation}\label{ecua0}
\left\{\begin{array}{ll}
\displaystyle{-\Delta u +2 \tilde{K}(x)= 2 K(x) e^u},  \qquad & \text{in $\Sigma$,}\\
\displaystyle{\frac{\partial u}{\partial n} + 2 \tilde{h}(x) = 2h(x)e^{u/2}}, \qquad  &\text{on
$\partial\Sigma $.}
\end{array}\right.
\end{equation}

In the literature there are results on some versions of the latter  problem. For example, the case $h=0$ has been treated   in \cite{chang1}, while the case $K=0$ has been considered in \cite{chang2, li-liu, liuwang}. There is also work on the blow-up analysis of solutions, see \cite{bao, francesca}, although the phenomenon is not fully understood. 

The case of constant $K$, $h$ has also been considered: in \cite{brendle} the author used a parabolic flow to obtain solutions in the limit.  Using methods from complex analysis  and the structure of Liouville equations, explicit expressions for the solutions and the exact values of the constants were determined if $\Sigma$ is a disk or an annulus, see \cite{otro, Asun}. Some classification results for the half-plane are also available in \cite{mira-galvez, li-zhu, zhang}. However, there are almost no results for 
the general situation in which both curvatures are variable functions: the following ones, are the only we are aware of at the moment. In \cite{cherrier} partial existence results are given, but some of them  include an undetermined Lagrange multiplier. A Kazdan-Warner  obstruction to existence has been found in \cite{hamza}. Very recently, a result for positive symmetric curvatures in the disk has appeared, see \cite{sergio}.  

In higher dimensions the natural analogous question regards the simultaneous prescription of the 
scalar curvature and the boundary mean curvature. The scalar-flat case with  constant mean curvature is known as the \emph{Escobar problem}, in strong relation with the Yamabe problem. In this regard, see \cite{amb, dja, escobar Annals, escobar Indiana, felli, HanLi1, HanLi2, marques} and the references therein.

Integrating \eqref{ecua0} and applying the Gauss-Bonnet theorem, one obtains

\begin{equation} \label{GB}
\int_{\Sigma}K e^u +  \oint_{\partial \Sigma}h e^{u/2}=  2\pi\chi(\Sigma),
\end{equation}
where $\chi(\S)$ is the Euler characteristic of $\S$.

In this paper we study existence and compactness of solutions of \eqref{ecua0} in the negatively-curved case, namely when $K(x)<0$. For existence, we focus on the case $\chi(\Sigma) \leq 0$. The case of the disk is intrinsically more complicated due to the non-compact effect of the group of M\"obius  maps, and it will be studied in a future work. However one of the main goals of the paper is the blow-up analysis given in Theorem \ref{teo4}, which is very general and applies to any PDE in the form \eqref{ecua0}.

It is easy to see that, via a conformal change of metric, we can always prescribe the values $h=0$, $K= sgn \chi(\Sigma)$, see Proposition  \ref{easy}. Hence, without loss of generality, we can assume that our initial metric is such that $\tilde{K}$ is constant and $\tilde{h}=0$. The problem to study  then becomes:

\begin{equation}\label{ecua}
\left\{\begin{array}{ll}
\displaystyle{-\Delta u +2 \tilde{K}= 2 K(x) e^u},  \qquad & \text{in $\Sigma$,}\\
\displaystyle{\frac{\partial u}{\partial n} = 2h(x)e^{u/2}}, \qquad  &\text{on
$\partial\Sigma $,}
\end{array}\right. 
\end{equation}
where $\tilde{K} = sgn \, \chi(\Sigma)$. 

If $K(x)<0$, problem \eqref{ecua} is the Euler-Lagrange equation of the energy functional $I: H^1(\Sigma) \to \R$, 

\begin{equation} \label{functional} I(u)= \int_{\Sigma} \left( \frac 1 2 |\nabla u|^2 + 2 \tilde{K} u + 2 |K(x)| e^u \right) 
- 4 \oint_{\partial \Sigma} h(x) e^{u/2}.\end{equation}

Observe that the area and  boundary terms are in competition, and a priori it is not clear whether $I$ is bounded from below or not. For the statement of our results it will be convenient to define the function $\mathfrak{D}: \partial \S \to \R$ as 

\begin{equation} \label{d} \mathfrak{D}(x)= \frac{h(x)}{\sqrt{|K(x)|}}.\end{equation}

Notice that $\mathfrak{D}$ is scale-invariant. As we shall see, the function $\mathfrak{D}$ will play a crucial role in the global behavior of the functional $I$, as well as in the blow-up analysis of solutions to \eqref{ecua0}.

Our first existence result deals with the case $\chi(\S)<0$.

\begin{theorem}\label{teo1} Assume that  $\tilde{K}<0$. Let $K$, $h$ be continuous functions such that $K<0$ and $\mathfrak{D}(p) < 1$ for all $p \in \partial \Sigma$. Then $I$ attains its infimum and hence problem \eqref{ecua} admits a solution. If moreover $h \leq 0$, then the solution is unique. \end{theorem}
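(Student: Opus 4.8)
The plan is to obtain the minimizer by the direct method in $H^1(\Sigma)$: first establish that $I$ is coercive and bounded below, then that it is weakly lower semicontinuous, extract a minimizing sequence converging weakly to a candidate $u$, and finally verify that $u$ solves \eqref{ecua} through its Euler--Lagrange equation together with elliptic regularity up to the boundary.

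The analytic core --- and the main obstacle --- is coercivity, which is exactly where the hypothesis $\mathfrak{D}<1$ enters. I would first split off the average, writing $u=\bar u + v$ with $\bar u = \frac{1}{|\Sigma|}\int_\Sigma u$ and $\int_\Sigma v=0$, and set $t=e^{\bar u/2}$. Using $\tilde K=-1$ this recasts the functional as
$$I(u)=\frac12\int_\Sigma|\nabla v|^2 - 4|\Sigma|\log t + 2t^2\int_\Sigma|K|e^{v} - 4t\oint_{\partial\Sigma}h\,e^{v/2}.$$
For fixed $v$ this is a function of $t>0$ governed by the competition between the quadratic term $2t^2\int_\Sigma|K|e^v$ and the linear term $-4t\oint_{\partial\Sigma}h\,e^{v/2}$; minimizing in $t$ reveals that the only potentially divergent negative contribution is controlled by $\bigl(\oint_{\partial\Sigma}h\,e^{v/2}\bigr)^2\big/\int_\Sigma|K|e^{v}$. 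Since Jensen's inequality gives $\int_\Sigma|K|e^v\ge(\min|K|)\,|\Sigma|>0$, the denominator is harmless and the task reduces to bounding the numerator.

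Here I would use the factorization $h=\mathfrak{D}\,|K|^{1/2}$ together with $d:=\sup_{\partial\Sigma}\mathfrak{D}<1$ to write $\oint_{\partial\Sigma}h\,e^{v/2}\le d\oint_{\partial\Sigma}|K|^{1/2}e^{v/2}$, and then invoke a sharp trace Moser--Trudinger inequality of the form
$$\Bigl(\oint_{\partial\Sigma}|K|^{1/2}e^{v/2}\Bigr)^2\le\Bigl(\tfrac14\int_\Sigma|\nabla v|^2+C\Bigr)\int_\Sigma|K|e^{v},\qquad \int_\Sigma v=0,$$
whose constant $\tfrac14$ is precisely the one making the threshold $\mathfrak{D}=1$ critical: substituting, the dangerous term is bounded by $\tfrac{d^2}{2}\int_\Sigma|\nabla v|^2+Cd^2$, leaving the Dirichlet energy with the strictly positive coefficient $\tfrac{1-d^2}{2}$. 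Combined with the coercive behaviour of $-4|\Sigma|\log t + 2t^2\int_\Sigma|K|e^v$ as $t\to0^+$ and $t\to+\infty$, this forces $I(u)\to+\infty$ as $\|u\|_{H^1(\Sigma)}\to\infty$, the slowly growing logarithmic correction being absorbed by the quadratic remainder. Proving this trace inequality with the sharp constant (presumably via the half-plane classification of \cite{li-zhu, zhang} localized around boundary points) is the hardest point.

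The remaining steps are standard. Weak lower semicontinuity holds because $\int_\Sigma|\nabla u|^2$ is convex, $\int_\Sigma u$ is weakly continuous by the compact embedding $H^1(\Sigma)\hookrightarrow L^2(\Sigma)$, and the exponential terms are weakly continuous: for a bounded sequence the interior and trace Moser--Trudinger inequalities give uniform $L^q$-bounds with $q>1$ on $e^{u_n}$ and on $e^{u_n/2}|_{\partial\Sigma}$, so $\int_\Sigma|K|e^{u_n}\to\int_\Sigma|K|e^u$ and $\oint_{\partial\Sigma}h\,e^{u_n/2}\to\oint_{\partial\Sigma}h\,e^{u/2}$ by Vitali's theorem. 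A minimizing sequence is then bounded by coercivity, converges weakly (up to a subsequence) to some $u$, and $I(u)\le\liminf I(u_n)$ shows $u$ is a minimizer; its first variation identifies it as a weak solution of \eqref{ecua}, upgraded to a classical one by regularity up to the boundary. Finally, when $h\le0$ the functional is strictly convex, since $2\int_\Sigma|K|e^u$ is strictly convex ($|K|>0$ and $t\mapsto e^{t}$ strictly convex), $-4\oint_{\partial\Sigma}h\,e^{u/2}=4\oint_{\partial\Sigma}|h|\,e^{u/2}$ is convex, and the remaining terms are convex or linear; hence the minimizer is the unique critical point, yielding uniqueness.
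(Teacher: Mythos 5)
Your strategy is in substance the paper's: everything hinges on a trace--type inequality bounding $\oint_{\partial \Sigma} h\, e^{u/2}$ by $\bar{\mathfrak{D}}=\sup_{\partial\Sigma}\mathfrak{D}^+$ times a combination of $\int_\Sigma |\nabla u|^2$ and $\int_\Sigma |K| e^{u}$. The multiplicative form you postulate,
\begin{equation*}
\Bigl(\oint_{\partial\Sigma}|K|^{1/2}e^{v/2}\Bigr)^2\le\Bigl(\tfrac14\int_\Sigma|\nabla v|^2+C\Bigr)\int_\Sigma|K|e^{v},
\end{equation*}
is equivalent, up to $\e$-losses, to the additive estimate the paper proves as Lemma \ref{trace}: one direction is $2\sqrt{XY}\le X+Y$, the other is applying the additive inequality to $v+c$ and optimizing over $c$. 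Your decomposition $u=\bar u+v$ and minimization in $t=e^{\bar u/2}$ is then a repackaging of the paper's direct substitution of Lemma \ref{trace} into $I$, which uses $\tilde K<0$ to conclude coercivity from $2\e |K|e^{u}+2\tilde K u\to+\infty$; the lower semicontinuity and the strict-convexity/uniqueness argument for $h\le 0$ are identical. The one genuine problem is that you leave the central inequality unproved and misjudge what it requires. You do not need the sharp constant $1/4$, hence no sharp trace Moser--Trudinger inequality and no half-plane classification: since $\partial\Sigma$ is compact and $\mathfrak{D}$ is continuous with $\mathfrak{D}<1$ pointwise, one has $\bar{\mathfrak{D}}<1$ strictly, so a version with constant $\tfrac14(1+\e)^2$ suffices, and that version has a completely elementary proof. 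Take a partition of unity $\{\phi_j\}$ of $\partial\Sigma$ with small supports, extend the outer normal to a vector field $N$ with $|N|\le 1$, write $\oint_{\partial\Sigma}\phi_j e^{u/2}$ as a divergence integral over $\Sigma$, and estimate the resulting term $\tfrac12\int_\Sigma e^{u/2}|\nabla u|\phi_j$ by Cauchy--Schwarz against $2|K|e^{u}\phi_j+\tfrac12|\nabla u|^2\phi_j$; the smallness of the supports lets you replace $h$ by $(\bar{\mathfrak{D}}+\e)\sqrt{|K|}$ locally. As written, your proof rests on an asserted inequality whose proposed justification (a sharp-constant trace inequality of that exact multiplicative form) is both harder than necessary and not something you can simply cite; supplying the elementary lossy version closes the gap and recovers the paper's argument.
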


The next theorems address the case $\chi(\S)=0$.

\begin{theorem} \label{teo2} Assume that  $\tilde{K}=0$. Let $K$, $h$ be continuous functions such that $K<0$ and

\begin{enumerate}
\item $\mathfrak{D}(p) < 1$ for all $p \in \partial \Sigma$;
\item $\oint_{\partial \Sigma} h(x)  >0$.
\end{enumerate}

Then $I$ attains its infimum and hence problem \eqref{ecua} admits a solution. \end{theorem}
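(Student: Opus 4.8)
The plan is to obtain the minimizer of $I$ in \eqref{functional} (with $\tilde K=0$) by the direct method, the subtlety being that $I$ is \emph{not} coercive on all of $H^1(\Sigma)$: along constants $u\equiv c\to-\infty$ one has $I(c)\to 0$. I would therefore first optimize out the mean value. Writing $u=\bar u+v$ with $\bar u=\frac{1}{|\Sigma|}\int_\Sigma u$ and $\int_\Sigma v=0$, and setting $P(v)=\int_\Sigma|K|e^{v}$ and $Q(v)=\oint_{\partial\Sigma}h\,e^{v/2}$, the functional reads
\[ I(\bar u+v)=\tfrac12\int_\Sigma|\nabla v|^2+2e^{\bar u}P(v)-4e^{\bar u/2}Q(v). \]
For fixed $v$ this is a quadratic in $s=e^{\bar u/2}>0$ with positive leading coefficient (as $P(v)>0$), so minimizing over $\bar u$ gives the reduced functional
\[ J(v):=\min_{\bar u\in\R}I(\bar u+v)=\tfrac12\int_\Sigma|\nabla v|^2-2\,\frac{\big(Q(v)_+\big)^2}{P(v)},\qquad Q(v)_+:=\max\{Q(v),0\}, \]
the minimum being attained at the finite value $e^{\bar u/2}=Q(v)/P(v)$ precisely when $Q(v)>0$; if $Q(v)\le0$ the infimum over $\bar u$ equals $\frac12\int_\Sigma|\nabla v|^2$ and is only approached as $\bar u\to-\infty$. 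Since the decomposition is unique, $\inf_{H^1}I=\inf_{\int v=0}J$.

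The heart of the matter is the coercivity and lower boundedness of $J$ on the zero-average space, and this is where hypothesis (1) enters. The claim is that there exist $\delta,C>0$ with
\[ \frac{\left(\oint_{\partial\Sigma}h\,e^{v/2}\right)^2}{\int_\Sigma|K|\,e^{v}}\le\frac{1-\delta}{4}\int_\Sigma|\nabla v|^2+C\qquad\text{for all }v, \]
whence $J(v)\ge\delta\int_\Sigma|\nabla v|^2/2-C'$, which by the Poincar\'e inequality on $\{\int_\Sigma v=0\}$ is coercive in $H^1(\Sigma)$. I expect this sharp Moser--Trudinger-type estimate to be the main obstacle. Writing $h=\mathfrak{D}\sqrt{|K|}$ on $\partial\Sigma$ via \eqref{d}, the extremal regime is a profile $v$ concentrating at a boundary point $p$, where the left-hand side is asymptotically $\mathfrak{D}(p)^2$ times the sharp boundary-to-bulk ratio $\big(\oint e^{v/2}\big)^2/\int e^{v}$, whose best constant against $\int_\Sigma|\nabla v|^2$ is exactly $\tfrac14$; since $\sup_{\partial\Sigma}\mathfrak{D}^2<1$ by (1), one gains the factor $(1-\delta)$, and $\mathfrak{D}\equiv1$ is the genuine threshold. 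This is the inequality already underlying Theorem~\ref{teo1}, and I would prove it by combining the interior Moser--Trudinger inequality with the trace Moser--Trudinger inequality, localizing near the boundary and using the continuity of $\mathfrak{D}$ to replace $h,K$ by their values at the concentration point up to a small error.

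Finally I would run the direct method on $J$. Hypothesis (2) gives $Q(0)=\oint_{\partial\Sigma}h>0$, hence $J(0)=-2\big(\oint_{\partial\Sigma}h\big)^2/\int_\Sigma|K|<0$, so $\inf J<0$. Take a minimizing sequence $v_n$; coercivity makes it bounded in $H^1(\Sigma)$, so up to a subsequence $v_n\rightharpoonup v_*$. The gradient term is weakly lower semicontinuous, while by the Moser--Trudinger inequality $e^{v_n}\to e^{v_*}$ in $L^1(\Sigma)$ and, by compactness of the trace operator, $e^{v_n/2}\to e^{v_*/2}$ in $L^1(\partial\Sigma)$; hence $P(v_n)\to P(v_*)>0$ and $Q(v_n)\to Q(v_*)$, so the second term of $J$ is weakly continuous and $J(v_*)\le\liminf J(v_n)=\inf J$, making $v_*$ a minimizer. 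Because any $v$ with $Q(v)\le0$ satisfies $J(v)=\frac12\int_\Sigma|\nabla v|^2\ge0>\inf J$, the minimizer must have $Q(v_*)>0$; therefore the optimal mean $\bar u_*=2\log\!\big(Q(v_*)/P(v_*)\big)$ is finite and $u_*=\bar u_*+v_*$ realizes $\inf_{H^1}I$. Being a minimizer, $u_*$ is a weak solution of \eqref{ecua}, and since $e^{u_*}\in L^p(\Sigma)$ for every $p$ by Moser--Trudinger, elliptic regularity up to the boundary upgrades it to a classical solution.
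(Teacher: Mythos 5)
Your reduction is essentially a cleaner reorganization of the paper's argument rather than a different proof. The paper does not minimize out the mean value explicitly: it uses Lemma \ref{trace} to get the lower bound $I(u)\ge \int_\Sigma\big(\tfrac{\e}{2}|\nabla u|^2+2\e |K|e^u\big)-C$, shows $\inf I<0$ by testing on constants (your computation $J(0)<0$ is the same test), and then excludes $\fint_\Sigma u_n\to\pm\infty$ along a minimizing sequence by a dichotomy: if $\fint_\Sigma u_n\to+\infty$ the volume term blows up against the lower bound, and if $\fint_\Sigma u_n\to-\infty$ the boundary term tends to zero so $\liminf I(u_n)\ge 0$, contradicting $\inf I<0$. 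Your step ``$Q(v_*)>0$, hence the optimal mean is finite'' is exactly the second horn of that dichotomy. The two routes are equivalent: optimizing Lemma \ref{trace} over $s=e^{\bar u/2}$ yields precisely your ratio inequality with constant $(\bar{\mathfrak{D}}+\e)^2/4$. The rest of your direct method (coercivity of $J$ on the zero-average subspace, weak lower semicontinuity, compactness of $v\mapsto e^v$ and of the trace $v\mapsto e^{v/2}$) is correct.

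The one genuine weak point is the central estimate $\big(\oint_{\partial\Sigma}h e^{v/2}\big)^2\le \big(\tfrac{1-\d}{4}\int_\Sigma|\nabla v|^2+C\big)\int_\Sigma|K|e^v$, which you assert but propose to prove by ``combining the interior Moser--Trudinger inequality with the trace Moser--Trudinger inequality''. That mechanism cannot work: both Moser--Trudinger inequalities provide \emph{upper} bounds on $\int_\Sigma e^v$ and $\oint_{\partial\Sigma}e^{v/2}$, whereas an upper bound on the ratio requires a \emph{lower} bound on the denominator relative to the numerator; moreover, Moser--Trudinger controls these quantities only up to factors exponential in the Dirichlet energy, while the inequality you need is \emph{linear} in $\int_\Sigma|\nabla v|^2$ --- a much stronger and structurally different statement. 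The correct proof is elementary and is exactly Lemma \ref{trace}: extend the outer normal to a vector field $N$ on $\Sigma$, write $\oint_{\partial\Sigma}\phi_j e^{u/2}=\int_\Sigma \dv\big(\phi_j e^{u/2}N\big)\le C\int_\Sigma e^{u/2}+\tfrac12\int_\Sigma\phi_j e^{u/2}|\nabla u|$, apply $2\sqrt{|K|}e^{u/2}|\nabla u|\le 2|K|e^u+\tfrac12|\nabla u|^2$, and use a partition of unity $\{\phi_j\}$ with small supports so that $h\le(\bar{\mathfrak{D}}+\e)\sqrt{|K|}$ on each patch; this is where hypothesis \emph{(1)} and the scale-invariant function $\mathfrak{D}$ of \eqref{d} enter. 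Your identification of the sharp constant $1/4$ and of $\mathfrak{D}\equiv 1$ as the threshold is correct, but you should replace the Moser--Trudinger route by this divergence-theorem argument (equivalently, invoke Lemma \ref{trace}, which you rightly recognize as the inequality underlying Theorem \ref{teo1}).
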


Compared to Theorem \ref{teo2}, our next result is concerned with the reversed case of the inequality 
for $\mathfrak{D}$ and $\oint_{\partial \Sigma} h(x)$.

\begin{theorem} \label{teo3} Assume that  $\tilde{K}=0$. Let $K$, $h$ be $C^1$ functions such that $K<0$ and

\begin{enumerate}
\item $\mathfrak{D}(p)>1$ for some $p \in \partial \Sigma$;
\item $\oint_{\partial \Sigma} h(x) <0$;
\item $\mathfrak{D}_{\tau}(p) \neq 0$ for any $p \in \partial \Sigma$ with $\mathfrak{D}(p)=1$.
\end{enumerate}

Then problem \eqref{ecua} admits a solution. \end{theorem}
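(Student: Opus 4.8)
The plan is to obtain the solution as a \emph{min-max} (mountain-pass) critical point of $I$, since under hypotheses (1)--(2) the functional is no longer bounded below and the direct minimization used in Theorems \ref{teo1}--\ref{teo2} is unavailable. The argument splits into a geometric part (constructing the min-max scheme) and an analytic part (compactness), the latter being where the blow-up analysis of Theorem \ref{teo4} enters decisively.

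First I would analyze the behaviour of $I$ along the two competing non-compact directions. On constant functions $u\equiv c$ one has, with $\tilde K=0$,
\begin{equation*}
I(c)=2e^{c}\int_{\Sigma}|K|-4e^{c/2}\oint_{\partial\Sigma}h,
\end{equation*}
so hypothesis (2), $\oint_{\partial\Sigma}h<0$, forces $I(c)>0$ for every $c$, with $I(c)\to 0^{+}$ as $c\to-\infty$ and $I(c)\to+\infty$ as $c\to+\infty$. This identifies a valley reached through very negative constants at which $I$ decays to $0$, and it is precisely the role of (2) to guarantee that the trivial direction cannot lower the energy below $0$. On the other hand, hypothesis (1), $\mathfrak{D}(p)>1$ at some $p\in\partial\Sigma$, should be used to build a family of test functions concentrating at $p$ (half-plane Liouville bubbles carried to $\Sigma$ in conformal coordinates, with an additional volume parameter) along which $I\to-\infty$; here the supercritical balance $\mathfrak{D}(p)>1$ is what makes the negative boundary contribution $-4\oint_{\partial\Sigma}h\,e^{u/2}$ dominate the positive area contribution $2\int_{\Sigma}|K|e^{u}$ and the gradient cost. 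These two computations together should yield a genuine mountain-pass geometry: a base point $u_{-}\equiv -M$ with $M$ large so that $I(u_{-})$ is small and positive, a far point $u_{B}$ (a deep bubble) with $I(u_{B})$ very negative, and --- via a sharp Moser--Trudinger--Onofri type inequality on $\Sigma$ --- a strictly positive barrier $\beta$ that every path joining them must cross. The min-max level
\begin{equation*}
c^{*}=\inf_{\gamma}\max_{t}I(\gamma(t))\ge\beta>0
\end{equation*}
is then finite and strictly above both ends.

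The main obstacle is \emph{compactness} at the level $c^{*}$, since $I$ does not satisfy the Palais--Smale condition: $c^{*}$ may a priori coincide with an energy level at which solutions blow up. To handle this I would run Struwe's monotonicity trick on a suitable monotone family $I_{\lambda}$, for instance replacing the area term $2\int_{\Sigma}|K|e^{u}$ by $2\lambda\int_{\Sigma}|K|e^{u}$, so that $\partial_{\lambda}I_{\lambda}=2\int_{\Sigma}|K|e^{u}>0$ and the mountain-pass geometry persists for $\lambda$ near $1$. Monotonicity makes $\lambda\mapsto c^{*}(\lambda)$ differentiable for a.e.\ $\lambda$, and at such $\lambda$ one extracts a \emph{bounded} Palais--Smale sequence, hence a critical point $u_{\lambda}$ solving the perturbed version of \eqref{ecua} with energy $c^{*}(\lambda)$ uniformly bounded. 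It then remains to let $\lambda\to 1$ along a sequence and to show that $\{u_{\lambda}\}$ does not blow up.

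This last exclusion of blow-up is exactly where Theorem \ref{teo4} and hypothesis (3) are used, Theorem \ref{teo4} applying to the perturbed problems since it is stated for any equation of the form \eqref{ecua0}. As $K<0$, interior concentration of $e^{u}$ is ruled out by the sign of the curvature, so any loss of compactness must occur at the boundary; by the blow-up analysis this can happen only at a point $p_{0}\in\partial\Sigma$ with $\mathfrak{D}(p_{0})=1$, the critical value at which the relevant half-plane profiles exist (solutions whose volume may diverge, the new feature stressed in the Introduction). Feeding the local bubble profile into the Pohozaev-type identities obtained from holomorphic domain-variations should produce a balancing relation forcing $\mathfrak{D}_{\tau}(p_{0})=0$; hypothesis (3) excludes this, so no boundary blow-up can occur. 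Consequently $\{u_{\lambda}\}$ is bounded in $H^{1}(\Sigma)$, converges to a critical point of the original $I$ at level $c^{*}$, and this limit is the desired solution of \eqref{ecua}. I expect the delicate points to be the construction of the positive barrier (a careful Moser--Trudinger analysis distinguishing concentration from spreading) and, above all, the quantitative exclusion of boundary blow-up through the tangential derivative of $\mathfrak{D}$.
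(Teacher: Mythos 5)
Your overall architecture (mountain-pass geometry from hypotheses (1)--(2), Struwe's monotonicity trick to get solutions of perturbed problems, then the blow-up analysis of Theorem \ref{teo4} to restore compactness) is the same as the paper's, and the geometric part is essentially right: the paper's barrier is obtained more elementarily than by Moser--Trudinger, namely on the level set $\{\oint_{\partial\Sigma}e^{u/2}=\delta\}$ via a compactness argument using $\oint_{\partial\Sigma}h<0$ (Lemma \ref{mp-geometry}), and the unbounded direction comes from explicit bubbles at a point with $\mathfrak{D}(p)>1$ (Lemma \ref{lemma:notbounded} and the Appendix), exactly as you suggest.

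However, there is a genuine gap in your compactness step. You assert that blow-up ``can happen only at a point $p_0\in\partial\Sigma$ with $\mathfrak{D}(p_0)=1$,'' but without further information Theorem \ref{teo4} gives much less. For bounded mass, part \emph{(2)} does yield a contradiction with $\chi_n\to 0$ (fine). But for unbounded mass, part \emph{(3)} only gives $\supp\sigma\subset\{p:\ \mathfrak{D}(p)\geq 1,\ \mathfrak{D}_\tau(p)=0\}$, and hypothesis (3) of the theorem removes only the points with $\mathfrak{D}=1$. The set $\{\mathfrak{D}>1,\ \mathfrak{D}_\tau=0\}$ is \emph{nonempty} under hypothesis (1) --- the maximum point of $\mathfrak{D}$ on $\partial\Sigma$ lies in it --- so infinite-mass boundary blow-up at such points cannot be excluded by parts (1)--(3) alone; Subsection 2.2 of the paper exhibits exactly this kind of concentration (on a whole boundary circle where $\mathfrak{D}\equiv h_1>1$). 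The paper closes this gap by upgrading the monotonicity trick with the Fang--Ghoussoub deformation argument so that the perturbed solutions $u_n$ have Morse index at most $1$ (Proposition \ref{p:monot}); then part \emph{(4)} of Theorem \ref{teo4} applies, and it is the classification of the Morse index of the half-plane profiles (Theorem \ref{t:app-index}: infinite-mass solutions with $\mathfrak{D}_0>1$ have infinite index) together with $\chi_n\leq 0$ that empties $S_1$, leaving only $S_0\subset\{\mathfrak{D}=1,\ \mathfrak{D}_\tau=0\}=\emptyset$ by hypothesis (3). Your proposal never produces a Morse index bound, so the exclusion of blow-up does not go through as written. (A minor additional point: after ruling out blow-up one must still prevent $u_n\to-\infty$; this follows from the min-max level being bounded away from $0$, which your positive barrier would also provide.)
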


We point out that the integration appearing in {\em (2)}, for both Theorem \ref{teo2} and Theorem \ref{teo3}, is done 
with respect to the (unique) conformal metric with minimal boundary obtained in Proposition \ref{easy}. If one 
wishes to go back to the original formulation \eqref{ecua0}, these conditions should be properly rephrased. 

We shall see also that the hypotheses of the above theorems are somehow natural. In the case of the annulus, for instance, the classification of  solutions for constant curvatures given in \cite{Asun} matches with our results. In this regard we shall also show an obstruction result for the existence of solutions, using an argument from \cite{Ros}. See Section 2 for details.

We shall show that under the assumptions of Theorem \ref{teo3} $I$ is not bounded from below. In this case we use a min-max argument to find a saddle-type solution to the problem. However the Palais-Smale property - i.e., convergence of approximate solutions - is not known to hold for the functional $I$. We bypass this problem via the well-known \emph{monotonicity trick} by Struwe; roughly speaking, we are able to prove existence if compactness of solutions to perturbed problems is guaranteed.

\medskip This motivates the study of blowing-up solutions. The question is: given a sequence $u_n$ of solutions of problems in the form \eqref{ecua0}, are they bounded from above? Assuming otherwise, we may define the \emph{singular set } as

\begin{equation} \label{singularset} S=\{p \in \Sigma: \ \exists \ x_n \to p \mbox{ such that } u_n(x_n) \to +\infty\}, \end{equation}
see \cite{breme}. Starting with \cite{breme, li-sha}, the properties of such blowing-up solutions for Liouville-type equations have been much studied in the literature. In sum, near any isolated point $p \in S$ one can rescale the solution and obtain in the limit an entire solution, and then take advantage of classification results. In this framework, a finite mass condition is always assumed, as for instance:

$$ \int_{\S} |K_n(x)| e^{u_n} < + \infty.$$ 
In the previous literature this condition is essential in many aspects, which we enumerate below.

\begin{enumerate}
\item The entire solutions of the limit problem (in the plane or the half-plane) are much more restricted in the case of finite mass. For instance, the classification of the solutions to the equation

$$ -\Delta U = 2 e^U \ \mbox{ in } \R^2,$$
dates back to Liouville (\cite{Liouville}), and form a large family of non-explicit solutions. However, under the finite mass assumption, the only solutions are given by:

$$ U(x)= 2 \log \Big ( \frac{2 \lambda}{1 + \lambda^2 |x-x_0|^2}\Big ),\ \ x_0 \in \R^2, \ \lambda >0, $$
see \cite{chen-li}.

\item  Finite total mass implies also that the set $S$ is finite, since $e^u$ behaves as a finite combination of Dirac deltas 
with weights bounded away from zero (see \cite{breme}). The use of the  Green's representation formula gives then some global information on the behavior of the solutions.

\item Since $S$ is finite, one has local maxima of $u_n$ around each of the point of $S$, and one can pass to the limit after a suitable rescaling.

\end{enumerate}

However, in our problem, blowing-up solutions may have diverging mass. Observe that \eqref{GB} allows some compensation between the area and the boundary terms, which can both diverge. We would like to emphasize that this is not a technical issue: indeed, those masses will diverge in some cases. This is a completely new phenomenon and one of the main goals of the paper is to give a complete description of it. Our main result in this respect is the following:

\begin{theorem} \label{teo4}
Let $u_n$ be a blowing-up sequence  (namely, $ \sup_\Sigma  u_n  \to +\infty$) of solutions to 
\begin{equation}\label{ecua-compact}
 \begin{cases}
 - \Delta u_n + 2 \tilde{K}_n(x) = 2 K_n(x) e^{u_n}, & \hbox{ in } \Sigma, \\
 \frac{\partial u_n}{\partial n} + 2 \tilde{h}_n(x) = 2 h_n(x) e^{u_n/2}, & \hbox{ on } \partial \Sigma, 
 \end{cases}
\end{equation}
with $\tilde{K}_n \to \tilde{K}$, $K_n \to K$, $\tilde{h}_n \to \tilde{h}$ and $h_n \to h$ in the $C^1$ sense, with $K<0$. Define the singular set 
$S$ as in  \eqref{singularset}, and 

\begin{equation}
\label{chi_n} \chi_n=\int_{\S} \tilde{K}_n + \oint_{\partial \S} \tilde{h}_n.
\end{equation}

Then the following assertions hold true:

\begin{enumerate}
\item $S \subset \{ p \in \partial \Sigma: \ \mathfrak{D}(p) \geq 1\}$. 

\item If $\int_{\Sigma} e^{u_n}$ is bounded, then there exists $m \in \N$ such that 
$$S= \{p_1, \dots p_m \} \subset  \{p\in \partial \S : \mathfrak{D}(p) > 1 \}.$$

In this case $|K_n|e^{u_n} \weakto \sum_{i=1}^m \b_i \delta_{p_i}$, $h_ne^{u_n/2} \weakto \sum_{i=1}^m (\b_i+2\pi) \delta_{p_i}$ for some suitable $\b_i > 0$. 
In particular,
$\ \chi_n \to 2 \pi m$. 

\item If $\int_{\Sigma} e^{u_n}$ is unbounded,  there exists a unit positive measure $\sigma$ on $\Sigma$ such that:
\begin{itemize}

\item[a)] $ \displaystyle \frac{|K_n|e^{u_n}}{\int_\Sigma |K_n|e^{u_n}} \weakto \sigma$, \quad $ \displaystyle \frac{h_ne^{u_n/2}}{\oint_{\partial \Sigma} h_n e^{u_n/2}} \weakto \sigma|_{\partial \Sigma}$; 

\item[b)] $ supp \ \sigma \subset \{p  \in \partial \Sigma:\ \mathfrak{D}(p)\geq 1,\ \mathfrak{D}_{\tau}(p) =0\}.$

\end{itemize}

\medskip
\item If there exists $m \in \N$ such that $ind(u_n) \leq m$ for all $n$, then $S= S_0 \cup S_1$, where:
$$S_0 \subset  \{p \in \partial \Sigma:\ \mathfrak{D}(p) = 1,\ \mathfrak{D}_{\tau}(p) =0\},$$
$$S_1 =\{p_1,\dots p_k\}  \subset  \{p\in\partial\S : \mathfrak{D}(p) > 1 \}, \ \ k \leq m.$$
If moreover $\chi_n \leq 0$, then $S_1$ is empty.

\end{enumerate}
\end{theorem}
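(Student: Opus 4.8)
The plan is to localize near each point of $S$ and, using isothermal coordinates that straighten $\partial\Sigma$, reduce \eqref{ecua-compact} to a model problem on a half-disk of $\mathbb{R}^2_+$. The structural fact driving everything is the sign $K<0$: writing $2K_ne^{u_n}=-2|K_n|e^{u_n}$, the interior nonlinearity is \emph{defocusing}, so that $-\Delta u_n=-2|K_n|e^{u_n}-2\tilde K_n\le C$, i.e. $u_n$ is $C$-almost subharmonic. For $(1)$, the sub--mean--value inequality then gives $u_n(x_0)\le\frac{1}{|B_r|}\int_{B_r}u_n+Cr^2$, and where the local area $\int_{B_r}e^{u_n}$ is bounded Jensen's inequality bounds $\frac{1}{|B_r|}\int_{B_r}u_n$ from above; hence interior concentration is impossible and, more carefully via the rescaled limit $-\Delta U=-2|K(p)|e^U$ on $\mathbb{R}^2$ (which admits no nontrivial blowing--up profile), $S\subset\partial\Sigma$. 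At a boundary point the rescaling $v_n(y)=u_n(x_n+\delta_ny)-u_n(x_n)$, $\delta_n=e^{-u_n(x_n)/2}$, yields $-\Delta U=-2|K(p)|e^U$ in $\mathbb{R}^2_+$ with $\partial_\nu U=2h(p)e^{U/2}$; a Pohozaev balance together with the classification of \cite{zhang,li-zhu,mira-galvez} shows such a profile exists only if $\mathfrak{D}(p)\ge1$. Here one must take care that $x_n$ need not be a local maximum, as emphasised in the introduction, so the selection of the scaling centre follows a Harnack/selection argument rather than maximisation.

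For $(2)$, boundedness of $\int_\Sigma e^{u_n}$ lets me run the Brezis--Merle dichotomy for the boundary concentration of $h_ne^{u_n/2}$: each concentration point carries a quantum of mass, so $S=\{p_1,\dots,p_m\}$ is finite. A local Pohozaev identity identifies the limit as a finite--area half--bubble, which forces $\mathfrak{D}(p_i)>1$ strictly and whose masses satisfy $\int 2|K(p_i)|e^{U}=2\beta_i$ and $\oint 2h(p_i)e^{U/2}=2(\beta_i+2\pi)$, the surplus $2\pi$ being the Gauss--Bonnet content of the half--plane bubble. Passing the measures to the limit yields the stated weak convergences, and integrating \eqref{ecua-compact} over $\Sigma$ gives $2\chi_n=\int_\Sigma 2K_ne^{u_n}+\oint_{\partial\Sigma}2h_ne^{u_n/2}\to-2\sum_i\beta_i+2\sum_i(\beta_i+2\pi)=4\pi m$, that is $\chi_n\to2\pi m$.

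When $\int_\Sigma e^{u_n}\to\infty$ the bubble picture collapses and I would normalise instead: set $\mu_n=|K_n|e^{u_n}/\int_\Sigma|K_n|e^{u_n}$ and extract a weak-$*$ limit $\sigma$. The comparability of interior and boundary masses forced by the equation in the concentration regions shows that $h_ne^{u_n/2}/\oint_{\partial\Sigma}h_ne^{u_n/2}\weakto\sigma|_{\partial\Sigma}$, while $(1)$ confines $\supp\,\sigma$ to $\{\mathfrak{D}\ge1\}\cap\partial\Sigma$. The delicate new ingredient is the condition $\mathfrak{D}_\tau=0$ on $\supp\,\sigma$: I would test \eqref{ecua-compact} against a tangential holomorphic domain--variation field and derive a Pohozaev--type identity whose leading contribution, after rescaling, is proportional to $\mathfrak{D}_\tau(p)$ times the (non--vanishing) local mass, forcing $\mathfrak{D}_\tau(p)=0$. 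This step is the main obstacle: with no finite limiting measure and no Green's representation available, there is no pointwise control of the profile, and the identity must be closed using only integral estimates of $h_ne^{u_n/2}$ against the variation field while tracking the boundary--curvature terms produced by the change of coordinates.

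Finally, for $(4)$ I would split $S=S_0\cup S_1$ according to whether the local mass is infinite (the borderline points $S_0\subset\{\mathfrak{D}=1,\ \mathfrak{D}_\tau=0\}$, located by part $(3)$) or finite (the half--bubbles $S_1\subset\{\mathfrak{D}>1\}$ of part $(2)$). Around each point of $S_1$ one can build a test function, concentrated at the bubble and with negative second variation; choosing these with disjoint supports for distinct points makes them linearly independent in a negativity subspace, so $\mathrm{ind}(u_n)\ge\#S_1=:k$ and hence $k\le m$. Moreover, by the local Gauss--Bonnet computation of $(2)$ each point of $S_1$ contributes $+2\pi$ to $\chi_n$; thus the hypothesis $\chi_n\le0$ is incompatible with $S_1\ne\emptyset$, giving $S_1=\emptyset$ in that case.
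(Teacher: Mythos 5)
Your overall architecture coincides with the paper's: Ekeland-type selection plus the half-plane classification for \emph{(1)}, bubbling/Pohozaev quantization for \emph{(2)}, normalized measures and holomorphic Pohozaev fields for \emph{(3)}, and Morse-index counting for \emph{(4)}. Parts \emph{(1)}--\emph{(3)} are essentially the paper's proof in sketch form (the paper selects the rescaling centre via Ekeland's principle applied to $e^{-u_n/2}$, which is what your ``selection argument'' must amount to, since local maxima need not exist). However, part \emph{(4)} contains genuine gaps.

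First, you locate $S_0\subset\{\mathfrak{D}=1,\ \mathfrak{D}_\tau=0\}$ ``by part \emph{(3)}''. This does not work. Part \emph{(3)} constrains only $\supp\sigma$, where $\sigma$ is the limit of the mass normalized by the \emph{total} mass; a point of $S$ whose local mass diverges more slowly than $\rho_n$ need not lie in $\supp\sigma$, and in any case \emph{(3)} cannot separate $\mathfrak{D}=1$ from $\mathfrak{D}>1$ on that support. The missing ingredient is the Morse-index classification of the \emph{infinite-mass} entire solutions: by Theorem \ref{t:app-index} \emph{b)}, a half-plane solution with $\mathfrak{D}_0>1$ violating \eqref{finitemass} is unstable outside every compact set, hence has infinite index; rescaling $m+1$ disjointly supported negative directions back into $\Sigma$ contradicts $ind(u_n)\le m$ (Lemma \ref{l:vol-fin-S1}). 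Only this forces every $\mathfrak{D}(p)>1$ blow-up point to carry a finite-mass M\"obius bubble, and hence forces infinite-local-mass points to have $\mathfrak{D}(p)=1$. Likewise, $\mathfrak{D}_\tau=0$ on $S_0$ requires a \emph{localized} version of the holomorphic Pohozaev argument, normalizing by the local mass $\int_{B_p^+(r)}e^{u_n}$ rather than by $\rho_n$ (Lemma \ref{l:D-final}); it is not a corollary of \emph{(3)}.

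Second, your proof that $\chi_n\le0$ forces $S_1=\emptyset$ via a global Gauss--Bonnet balance does not close. The identity $\chi_n=\oint_{\partial\Sigma}h_ne^{u_n/2}-\int_\Sigma|K_n|e^{u_n}$ is exact, but when $S_0\neq\emptyset$ the two integrals both diverge and their difference over the $S_0$ regions is an uncontrolled $\infty-\infty$ cancellation (for the one-dimensional profile \eqref{profile2} the interior and boundary densities cancel exactly at leading order), so the $+2\pi$ per point of $S_1$ cannot be isolated in the limit. The paper argues differently: the gradient estimate \eqref{est1} near a point of $S_1$ yields $\int_{B_p^+(r)}|\nabla u_n^-|^2\to+\infty$, which contradicts the bound $\int_\Sigma|\nabla u_n^-|^2=O(1)$ valid precisely when $\chi_n\le0$ (Lemma \ref{u^-} \emph{b)}). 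Some version of this $u_n^-$ energy control, or an equivalent substitute, is needed to complete your argument.
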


\

In the above statement, $\mathfrak{D}_\tau(p)$ refers to the derivative of $\mathfrak{D}$ with respect to the tangential direction $\tau$ 
along $\partial \Sigma$. Moreover, $ind(u_n)$ stands for the Morse index of the function $u_n$, namely the maximal dimension of a subspace $E \subset H^1(\Sigma)$ such that the quadratic form $ Q_n: H^1(\Sigma) \to \R$
\begin{equation} \label{quadratic} Q_n(\psi)=  \int_{\Sigma} [ |\nabla \psi|^2 + 2 |K_n(x)| e^{u_n} \psi^2] -  \oint_{\partial \Sigma} h_n e^{u_n/2}\psi^2, \end{equation}
is negative definite when restricted to $E$. 


We will also give some description on the asymptotic profiles of $u_n$ around the singular points, see Proposition \ref{p:6.3} {\em a)} and {\em b)}.

Some comments are in order:

\begin{enumerate}

\item In general we cannot ensure that the singular set $S$ is finite. Indeed we will show explicit examples, given by \cite{Asun}, in which it is not, see Subsection 2.2. This is an entirely new feature of this kind of blow-up. 



\item We are able to pass to a limit problem around any singular point $p \in S$. This is tricky since $p$ need not be isolated and we cannot guarantee the existence of local maxima around such a point. We do this by choosing carefully a sequence $x_n \to p$ with $u_n(x_n) \to +\infty$, even if $u_n(x_n)$ are not local maxima. Our main tool for that is the {\em Ekeland variational principle} (in a finite-dimensional fashion). 

We point out that this is a technical novelty even for the finite mass case and allows one to pass to a limit problem, as in \cite{li-sha}, without knowing a priori that $S$ is finite. \medskip

\item As we can see, the terms  $\int_{\Sigma} |K_n| e^{u_n}$, $\oint_{\partial \S} h_n e^{u_n/2}$ can become unbounded, but when normalized they converge to the same measure. Hence they are in strong competition, and this cancellation implies that a normalization technique is not of use here. Despite, we are able to use a Pohozaev-type identity to show that $\supp \sigma \subset \{ p \in \partial \S: \ \mathfrak{D}_\tau(p)=0 \}$. The main obstacle here is that we do not have any control on the Dirichlet energy of the solutions. We are able to bypass this problem by using {\em holomorphic maps} as test fields in the Pohozaev-type identity, see Proposition \ref{p:int-parts} for more details.

\medskip

\item The limit problem here is posed in a half-plane:

\begin{equation}\label{eclimit}
\left\{\begin{array}{ll}
\displaystyle{-\Delta v = - 2  e^v},  \qquad & \text{in $\R^2_+$,}\\
\ \displaystyle{\frac{\partial v}{\partial n} = 2 \mathfrak{D}(p) e^{v/2}}, \qquad  &\text{on
$\partial \R^2_+ $.}
\end{array}\right.
\end{equation}
In the spirit of \cite{chen-li}, solutions of the above problem with finite mass have been classified in \cite{zhang}, and they exist only if $\mathfrak{D}(p)>1.$

 However we shall need a classification of \emph{all solutions}, with bounded or unbounded mass, that has been given in \cite{mira-galvez} (in the spirit of Liouville, \cite{Liouville}). Those solutions exist only for $\mathfrak{D}(p) \geq 1$ and are linked to the family of holomorphic maps from the half-plane to disks on the hyperbolic space, so they form a much broader family of solutions. However 
in Section 4 we are able to completely characterize the stability of all such solutions, and this allows us
to exclude many blow-up profiles by means of the Morse index restriction. As a consequence the unique possible limit profile with infinite mass is the solution of \eqref{eclimit} with $\mathfrak{D}(p)=1$, namely the $1$-D function:

$$
v(s,t)= 2 \log \left( \frac{\lambda}{1 + \lambda t} \right), \quad \lambda >0.
$$

Observe that there could also be finite-mass blow-up on a finite number of points, if $\chi_n \geq 0$. It is an interesting open question whether those two types of blowing-up behavior can coexist in general.

\item Assumptions on the boundedness of the Morse index of solutions are relatively natural in the framework of minimal surfaces (we 
refer the reader e.g. to the papers \cite{fc}, \cite{ss}, \cite{sharp}, and also to \cite{farina} in a PDE setting), but are completely new in this kind of problems. 
Moreover, we can show an explicit family of blowing-up solutions in the annulus, given in \cite{Asun}, with diverging Morse index and with a different limit profile, see Subsection 2.2.

\end{enumerate}

\medskip The rest of the paper is organized as follows. In Section 2 we discuss the naturality of our assumptions, showing that they 
are in some cases sharp. Section 3 is devoted to the variational study of the functional $I$, completing the proof of Theorem \ref{teo1} and \ref{teo2}. We also combine the  monotonicity trick of Struwe (in a version by Jeanjean) with index bounds by Ghoussoub and Fang. In this way we find solutions of perturbed problems with Morse index 1, under the setting of Theorem \ref{teo3}. 
In Section 4 we recall the classification results for entire solutions in half spaces and determine their Morse index. 
The blow-up analysis of this kind of sequences is started in Section 5. Section 6 is devoted to the unbounded mass case, whereas the bounded mass or bounded Morse index case is studied in Section 7. We conclude the paper with an Appendix devoted to evaluate certain test functions on the energy functional $I$. 
 
\medskip {\bf Acknowledgements:} D.R. wishes to thank J.A. G\'{a}lvez for several discussions on his classification result given in \cite{mira-galvez}, which has been of great help in the elaboration of Section 4.

\medskip

{\bf Notation.} An open ball of center $p$ and radius $r$ will be denoted by $B_p(r)$, while $A(p; r, R)$ stands for an annulus of center $p$ and radii $0 < r < R$. We also use the notation:

$$ B_p^+(r)= \{(s,t )\in B_p(r) \subset \R^2:\ t \geq 0\};$$ $$\Gamma_p^+(r)=\{(s,t) \in \partial B_p^+(r): t=0 \}, \ \ \partial^+ B_p(r) = \partial B_p^+(r) \setminus  \Gamma_p^+(r).
$$

We shall use the symbols $o(1)$, $O(1)$ in a standard way to denote quantities that converge to $0$ or are bounded, respectively. Analogously, we will write $o(\rho)$, $O(\rho)$ to denote quantities that, divided by $\rho$, converge to $0$ or are bounded, respectively.

Given any function $f$, we denote by $f^-= \min\{f,0\}$  and $f^+= \max\{f,0\}$, so that $f= f^+ + f^-$. Moreover, many times in the paper we drop the element of area or length, that is we shall only write $\int_\S K e^u$ or $\int_{\partial \S} h e^{u/2}$; these expressions refer to the standard measure given by the metric. We also use the symbol $\fint f$ to denote the mean value of $f$, that is $$\fint_{\Sigma} f = \frac{1}{\vert \Sigma \vert}\int_\S f.$$

\

\section{Obstructions to existence and examples of blow-up} \label{s:2}
\setcounter{equation}{0}

In this section we include two types of comments related to our problem. First we show some obstructions for the existence of solutions to problem \eqref{ecua}, which somehow complement the existence results given in Theorems \ref{teo1}, \ref{teo2}, \ref{teo3}. Secondly, we will provide some explicit examples of blowing-up solutions for problem \eqref{ecua-compact} with diverging mass, in order to highlight the differences of this phenomenon with respect to the more standard finite-mass blow-up.

\subsection{Obstructions to existence}

In \cite{Asun} the problem of prescribing constant curvatures $K$, $h_1$, $h_2$ on an annulus is treated. In this case all solutions are known explicitly; in particular, the following result is proved (see \cite{Asun}, [Corollary 1, Section 3]):

\begin{theorem} \label{asun} Let $\S$ be an annulus with $K\equiv-1$ and constant geodesic curvatures  $h_1$, $h_2$ at the two boundary components. Then:

\begin{enumerate}
\item[i)]  either $h_1 +h_2>0$ and both $h_i<1$,

\item[ii)]  or $h_1 + h_2<0$ and some $h_i>1$, 

\item[iii)] or  $h_1=1$, $h_2=-1$ or vice-versa. 

\end{enumerate}

\end{theorem}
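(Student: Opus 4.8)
The plan is to pass to an explicit one-dimensional version of the problem and read off the three cases from a single conserved quantity. First I would uniformize: any conformal annulus is biholomorphic to a flat cylinder $C_T=(0,T)\times(\R/2\pi\Z)$ carrying the metric $dt^2+d\theta^2$. Writing the curvature-$(-1)$ metric as $e^{u}(dt^2+d\theta^2)$, the pair (Gauss curvature $\equiv-1$, geodesic curvatures $h_1,h_2$) translates, exactly as in \eqref{ecua}, into
\[
 \Delta u = 2 e^{u}\ \text{ in } C_T, \qquad -u'(0)=2h_1 e^{u(0)/2}, \quad u'(T)=2h_2 e^{u(T)/2},
\]
the two boundary identities reflecting that the outward normals at $t=0$ and $t=T$ are $-\partial_t$ and $+\partial_t$. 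Since the data are rotationally invariant, I would reduce to symmetric solutions $u=u(t)$ (this is where the explicit classification of \cite{Asun}, in the spirit of Liouville's representation of solutions of $\Delta u=2e^{u}$ by holomorphic developing maps, is invoked), so that the PDE becomes the ODE $u''=2e^{u}$ on $(0,T)$.

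The key observation is that this ODE carries the first integral $C:=(u')^2-4e^{u}$, constant on $[0,T]$. Setting $w=e^{u/2}>0$ and $c=C/4$ one gets $(w')^2=w^4+cw^2$, while the geodesic curvature of a level circle $\{t=\mathrm{const}\}$ with respect to the upward normal is precisely $w'/w^2$; hence
\[
 h_1^2 = 1+\frac{c}{w(0)^2}, \qquad h_2^2 = 1+\frac{c}{w(T)^2}.
\]
Because the \emph{same} constant $c$ appears at both ends, the sign of $h_i^2-1$ is the same at the two boundary components: either $|h_1|,|h_2|<1$ (when $c<0$), or $|h_1|=|h_2|=1$ (when $c=0$), or $|h_1|,|h_2|>1$ (when $c>0$). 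This rigidity is the heart of the trichotomy.

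It then remains to fix the signs of $h_1+h_2$ in each regime, which I would do from the explicit solutions. For $c=-b^2<0$ one has $w=b/\cos(b(t-t_0))$, whence $h_1=\sin(bt_0)$ and $h_2=\sin(b(T-t_0))$ with $bt_0,\,b(T-t_0)\in(-\pi/2,\pi/2)$ and $bt_0+b(T-t_0)=bT\in(0,\pi)$; the identity $\sin\alpha+\sin\beta=2\sin\frac{\alpha+\beta}{2}\cos\frac{\alpha-\beta}{2}$ gives $h_1+h_2>0$, which is case (i). For $c=0$ one has $w=1/(t_0-t)$ (or $1/(t-t_0)$), forcing $\{h_1,h_2\}=\{1,-1\}$, case (iii). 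For $c>0$ one has $w=\sqrt c/\sinh(\sqrt c(t_0-t))$ with $t_0>T$, so $h_2=\cosh(\sqrt c(t_0-T))>1$ and $h_1=-\cosh(\sqrt c\,t_0)<-1$; since $t_0>t_0-T>0$ and $\cosh$ is increasing, $h_1+h_2<0$, which is case (ii) (the reflected branch $t_0<0$ merely swaps the roles of $h_1,h_2$). Assembling the three regimes yields the stated trichotomy.

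I expect the main obstacle to be the reduction to rotationally symmetric solutions in the first step: a priori a solution on the annulus need not be invariant, so one must either establish this symmetry or, as in \cite{Asun}, bypass it by classifying \emph{all} solutions through the holomorphic developing map and checking that the constant-geodesic-curvature boundary conditions force the boundary circles to be hypercycles, horocycles or metric circles — exactly the three regimes above. Once symmetry (or the explicit classification) is in hand, the remainder is the elementary first-integral computation, with the only real care needed in the sign conventions for the two outward normals and in verifying the sum inequalities.
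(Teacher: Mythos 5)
The paper does not prove this statement at all: Theorem \ref{asun} is quoted verbatim from \cite{Asun} (Corollary 1, Section 3 there), so there is no internal proof to compare against, and your attempt has to stand on its own. On its own terms, the first-integral computation is correct and does produce the trichotomy — the identities $h_i^2=1+c/w^2$ at the two ends, and the explicit solutions $w=b/\cos(b(t-t_0))$, $w=1/(t_0-t)$, $w=\sqrt c/\sinh(\sqrt c(t_0-t))$ with the sum inequalities, all check out — \emph{but only for rotationally symmetric metrics}. That is the genuine gap, and it is not a removable one: the reduction to $u=u(t)$ is false, not merely unproved. The paper itself exhibits in Subsection 2.2 the non-radial solutions \eqref{uasun}, $u_\gamma(z)=2\log\bigl(\gamma|z|^{\gamma-1}/(h_1+\mathrm{Re}(z^\gamma))\bigr)$, which have constant boundary curvatures $h_1>1$, $h_2=-h_1r^{-\gamma}$ and genuinely depend on $\arg z$. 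So no symmetry or moving-plane argument can rescue the first step, and your ODE analysis simply does not see this family (it happens to satisfy case \emph{ii)}, but that has to be verified separately).

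You correctly identify the fallback — classify \emph{all} solutions via the Liouville developing map — but that is precisely the content of \cite{Asun} and is where all the work lies, so invoking it leaves the proof incomplete. If you want to carry it out: on the annulus the developing map $g$ lives on the universal cover and is equivariant under a single M\"obius transformation $M$ generating the monodromy; the constant-curvature boundary condition forces each boundary component to be mapped into an $M$-invariant curve of constant curvature in $\mathbb{H}^2$, and the elliptic/parabolic/hyperbolic trichotomy for $M$ (whose invariant curves are metric circles with $|h|>1$, horocycles with $|h|=1$, and hypercycles with $|h|<1$ respectively) is what replaces your conserved quantity $c$ in the non-radial case; the sum inequalities then still require the explicit normal forms. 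In short: the skeleton is right and the radial computation is a clean special case, but as written the argument proves the theorem only for the rotationally invariant solutions, which form a proper subfamily.
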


As it can be seen, Theorem \ref{teo2} is the counterpart of \emph{i)}, whereas Theorem \ref{teo3} is related to  \emph{ii)}. Case  \emph{iii)} is somehow borderline.

\bigskip The assumptions of Theorems \ref{teo1}, \ref{teo2}, \ref{teo3} are natural also in view of the following result:

\begin{theorem} \label{rosenberg}  Let $\S$ be a compact surface with boundary $\partial \S$, and denote by $h$ its geodesic curvature. Assume that $h(x) > \sqrt{|K^-(p)|}$ for all $x \in \partial \S$, $p \in \S$. Then $\Sigma$ is homeomorphic to a disc.

\end{theorem}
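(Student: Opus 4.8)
The plan is to show that the hypothesis forces $\chi(\S)>0$, which for a connected compact surface with non-empty boundary is equivalent to $\S$ being a disc: recall that such a surface has $\chi(\S)=2-2g-b\le 1$ in the orientable case (and $\chi=2-c-b\le 0$ in the non-orientable case), with $\chi=1$ if and only if $g=0$, $b=1$, i.e. the disc. (Here I assume $\S$ connected, as is standard; otherwise the conclusion should be read componentwise.) Set $M:=\sup_{\S}\sqrt{|K^-|}$. Since $\S$ is compact and $h(x)>\sqrt{|K^-(p)|}$ holds for \emph{every} pair $(x,p)$, we get $h_0:=\min_{\partial\S}h>M\ge 0$, so the boundary is uniformly strictly convex while $K\ge -M^2$ on all of $\S$.

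First I would introduce Fermi (boundary-normal) coordinates $(s,t)$, with $s$ arc length on $\partial\S$ and $t=\dist(\cdot,\partial\S)$, in which the metric reads $dt^2+\varphi(s,t)^2\,ds^2$ with $\varphi(s,0)=1$, $\varphi_t(s,0)=-h(s)$, and $K=-\varphi_{tt}/\varphi$. The geodesic curvature of the equidistant curve $\{t=\text{const}\}$ is $k(s,t)=-\varphi_t/\varphi$, and a direct computation gives the Riccati equation $k'=k^2+K\ge k^2-M^2$, with $k(s,0)=h(s)\ge h_0>M$. Comparing with the solution $\kappa$ of $\kappa'=\kappa^2-M^2$, $\kappa(0)=h_0$, which is increasing and blows up at the finite time $t_*=\int_{h_0}^{\infty}\frac{d\kappa}{\kappa^2-M^2}<\infty$, I obtain $k(s,t)\ge\kappa(t)$. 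Hence along every inward normal geodesic $k$ stays $>M\ge 0$, is increasing, and blows up to $+\infty$ at some focal distance $\rho(s)\le t_*$; correspondingly $\varphi\to 0$ there. Thus every inward normal meets a focal point within uniformly bounded distance, and all equidistant curves are strictly convex.

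Next I would convert this geometric picture into topology via the distance function $f=\dist(\cdot,\partial\S)$. Since $f$ strictly decreases towards the boundary it has no interior local minimum, and the strict convexity of every equidistant rules out interior saddle points; so $f$ behaves as a (topological) Morse function whose only interior critical points are local maxima. Morse theory for manifolds with boundary, with the collar $\partial\S\times[0,\e]$ contributing $\chi=0$ and each interior maximum contributing an index-$2$ handle, then gives $\chi(\S)=\#\{\text{local maxima of }f\}\ge 1$. Combined with $\chi(\S)\le 1$, this yields $\chi(\S)=1$, so $\S$ is a disc.

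The main obstacle is precisely this last, topological step. The function $f$ is only Lipschitz and fails to be smooth along the cut locus of $\partial\S$, so the implication ``no saddles $\Rightarrow\chi=\#$maxima'' must be justified through the correct regularity theory of distance functions, the convexity of the equidistants being the geometric input that excludes index-$1$ behaviour at cut points. An alternative is to run a Gauss--Bonnet computation on the focal-free domain $\{t<\rho(s)\}$, where $\int_\S K$ and $\oint h$ can be organised along the normal geodesics, while carefully accounting for the contribution of the cut locus (which is exactly what encodes $\chi$). By contrast, the Riccati comparison and the curvature/convexity estimates are entirely routine; the care lies in turning the uniform focusing of normal geodesics into the statement $\chi(\S)>0$.
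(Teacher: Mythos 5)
Your analytic engine is the right one --- the Riccati comparison $k' = k^2 + K \ge k^2 - M^2$ for the geodesic curvature of the equidistants, with initial datum $h>M$, is exactly the monotonicity that the paper imports from \cite[Appendix A]{Sun} --- but the step that converts this into $\chi(\S)>0$ is a genuine gap, and you have essentially flagged it yourself. The assertion that ``strict convexity of every equidistant rules out interior saddle points'' and hence $\chi(\S)=\#\{\text{local maxima of }f\}$ is not justified and is not even correctly formulated: $f=\dist(\cdot,\partial\S)$ is only Lipschitz along the cut locus of the boundary, its critical points in the Grove--Shiohama sense need not be isolated (in a rotationally symmetric annulus the ``maximum set'' is an entire circle, contributing $\chi=0$ to the count), and your focal-distance bound $t_*$ does not prevent the cut point from occurring \emph{before} the focal point along each normal geodesic. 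So the focusing estimate alone does not exclude the annulus; to do so one must compare, at a cut point, the two families of equidistants arriving from opposite sides --- which is precisely the content you would have to supply, and which amounts to redoing the paper's argument. Making the Morse-theoretic route rigorous requires the structure theory of the cut locus of $\partial\S$ together with a local convexity argument showing each superlevel set $\{f\ge t\}$ is locally an intersection of convex regions; none of this is carried out.

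The paper's proof sidesteps all of this non-smooth analysis by arguing by contradiction and localizing the Riccati comparison to a tubular neighbourhood of a \emph{single} length-minimizing curve: if $\partial\S$ is disconnected, take a geodesic $\gamma$ minimizing the distance between two boundary components; if $\partial\S$ is connected but $\S$ is not a disc, collapse $\partial\S$ to a point and minimize length among curves with endpoints on $\partial\S$ whose image in the quotient is homotopically non-trivial. Along such a minimizer there are no cut points before the far endpoint, so the parallel curves $\Lambda_l$ are smooth there, their curvature $\hat h(t)$ is strictly increasing by the comparison, and at the contact point $q$ with the boundary one gets $h(q)\le -\hat h(L)<0$, contradicting $h>0$. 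This uses only the one-dimensional ODE comparison where everything is regular, which is why the paper's route closes while yours, as written, does not. I would either adopt that localization or supply the missing cut-locus analysis before the Morse-theoretic bookkeeping can be trusted.
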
 

\begin{proof} The proof follows closely an argument by Rosenberg in \cite{Ros} (see also \cite[Appendix A]{Sun}), hence we will be sketchy. Assume by contradiction that $\Sigma$ is not a topological disk, and consider two cases: 
	
\medskip

{\bf Case 1:}  $\partial \S$ is not connected.	
	
Let $\Lambda_0$ be a connected component of $\partial \S$, and $\gamma$ be a curve minimizing the distance between $\Lambda_0$ and $\partial \S \setminus{\Lambda_0}$. Namely, $\gamma$ is a solution of the problem:

$$ Inf \{ Length(\gamma):\ \gamma:[0, L] \to \S,\ \gamma(0) \in \Lambda_0, \ \gamma(L) \in \partial \S \setminus{\Lambda_0}\}.$$

We can assume that $\gamma$ is parametrized by arc-length, and set $p = \gamma(0)$, $q=\gamma(L)$. Take a neighbourhood $O \subset \S$  of  $\gamma[0,L]$ and define $\Lambda_{l} \subset  O$ to be a parallel curve to $\Lambda_0$ at distance $l \in [0,L]$, that is:

$$ \Lambda_l = \{ x \in O: \ d(x, \Lambda_0)= l\},$$
where $d$ denotes the geodesic distance in $\S$. Using the above curvature condition,  it was shown in \cite[Appendix A]{Sun} that $\Lambda_l$ is a regular curve if $O$ is a sufficiently small neighbourhood. Define $\hat{h}(t)$ to be the geodesic curvature of the curve $\Lambda_l$ at the point $\gamma(t)$. It can be shown (see again \cite[Appendix A]{Sun}) that $\hat{h}(t)$ is an increasing function of $t$, so $\hat{h}(L) > \hat{h}(0)= h(p)$. Observe now that $\Lambda_l$ is in contact with $\partial \S \setminus \Lambda_0$, so  $h(q) \leq - \hat{h}(L)$, a contradiction.

\medskip

{\bf Case 2:} $\partial \S$ is connected.

Let us denote by $\tilde{\S}$ the compact surface obtained by identifying $\partial \S$ to a single point. If $\S$ is not a topological disk, then $\tilde{\S}$ is not homeomorphic to a sphere, hence it is not simply connected. Consider the following minimization problem:

$$ Inf \left \{ Length(\gamma):\ \gamma:[0, L] \to \S,\ \{ \gamma(0),\ \gamma(L)\} \subset \partial \S, \ [\tilde{\gamma}] \mbox{ not trivial} \right \}.$$

\begin{figure}[h]
\centering
\includegraphics[width=0.4\linewidth]{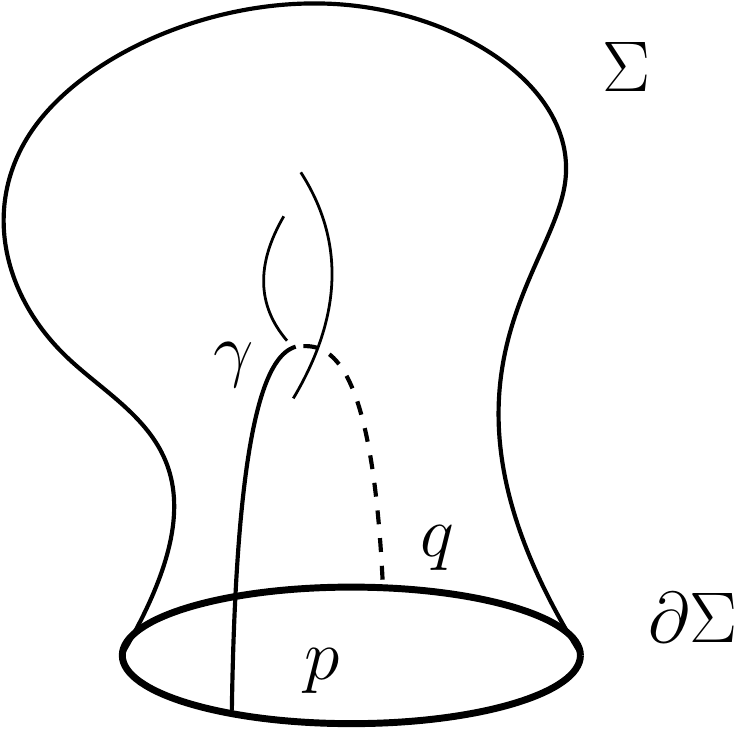}
\caption{A curve minimizing the distance in $\Sigma$ with non-trivial homotopy class in $\tilde{\S}$}
\label{fig:picture}

\end{figure}

Above, we are denoting $\tilde{\gamma}: [0,L] \to \tilde{\S}$ to be the curve obtained by applying the identification to $\gamma$, and $[\tilde{\gamma}]$ to be its homotopy class. Clearly this infimum is attained at a certain $\gamma$ (as in Figure~1), which is a local minimizer of the distance. We can now follow the arguments of Case 1 and still arrive to a contradiction.

\end{proof}

Let us point out that in  case of a disk we can have $h(x) > \sqrt{|K^-(p)|}$ for all $x \in \partial \S$, $p \in \S$. For instance, in the Poincar\'{e} disk there are balls with constant geodesic curvature $h>1$. 

\subsection{Explicit examples of infinite mass blow-up} Consider the problem:

\begin{equation}\label{ecua-asun}
\left\{\begin{array}{ll}
\displaystyle{-\Delta u = - 2 e^u},  \qquad & \text{in $A(0;r,1)$,}\\
\displaystyle{\frac{\partial u}{\partial n} + 2= 2h_1 e^{u/2}}, \qquad  &\text{on
$ |x|=1$,} \\
\displaystyle{\frac{\partial u}{\partial n} - 2/r = 2h_2 e^{u/2}}, \qquad  &\text{on
$ |x|=r$,}
\end{array}\right.
\end{equation}
where $h_1$, $h_2$ are two real constants. All explicit solutions of this problem have been classified by \cite{Asun}; here we exhibit some of them observing that they form families of blowing-up solutions with infinite mass.

As a first example (see \cite[Equation (3) and Lemma 2]{Asun}), the functions

$$ u(x) = \log \left( \frac{4}{|x|^2(\lambda + 2 \log |x|)^2} \right), \quad \mbox{ for any } \lambda \notin [0, -2 \log r], $$
are solutions of \eqref{ecua-asun}; if $\lambda<0$ then $h_1=1$ and $h_2=-1$, whereas if $\lambda> -2 \log r$, $h_1=-1$ and $h_2=1$.  Observe that if $\lambda$ tends to $0$ or $2 \pi r$, then the functions $u$ blow up at a whole component of the boundary, which is indeed the one with curvature equal to $1$. In particular, the blowing-up set $S$ is infinite here.

\bigskip Let us give a second example. Given any $h_1>1$, for any $\gamma \in \N$, there exists an explicit solution in the form (we use complex variable notation):

\begin{equation} \label{uasun} u_\gamma(z)= 2 \log \left ( \frac{ \gamma |z|^{\gamma-1}}{h_1+Re(z^{\gamma})}   \right ), \end{equation}
where $h_2 =-h_1 r^{-\gamma}$. See \cite[equation (5) and Lemma 2]{Asun}.  

Observe that the  solutions in \eqref{uasun} form a blowing-up sequence as $\gamma \to +\infty$, keeping $h_1>1$ fixed. Indeed,

$$ u_\gamma(z) \to - \infty \ \mbox{ if } |z|<1,\quad u_\gamma(z) \to +\infty  \ \mbox{ if } |z|=1.$$

Hence in this case $S= \{|z|=1\}$ and $h_1>1$; it is also easy to check that  $\sigma = \delta_{\{|z|=1\}}$. Let us compute the asymptotic profile of these solutions. Fix a point of blow-up $z=-1$, and define the rescaling:

$$ v_\gamma(z)= u_\gamma \left (1-\frac{z}{\gamma} \right ) - 2\log \gamma.$$

Hence $v$ can be written in the form:

$$ v_\gamma(z)= 2 \log \left ( \frac{ |1-\frac{z}{\gamma}|^{\gamma-1}}{h_1+Re((1-\frac{z}{\gamma})^{\gamma})}   \right ).$$

Clearly,

$$ \lim_{\gamma \to +\infty} \left (1-\frac{z}{\gamma} \right )^\gamma = e^{-z},$$
so  $v_\gamma$ converges, at least point-wise, to the function:

$$v(z)= 2 \log \left ( \frac{ |e^{-z}|}{h_1+Re(e^{-z})}   \right )= 2 \log \left ( \frac{ e^{-t}}{h_1+e^{-t} \cos s}   \right ),$$
which is defined in the half-plane $\{(s,t) \in \R^2:\ t \geq 0\}$. This is indeed a solution to the limit problem in the half-space with $K=-1 $, $h_1>1$, infinite mass and infinite Morse index, according to our results in Section 4. 

This example shows that infinite Morse index blowing up solutions exist, even in this geometric context. Moreover, the blow-up limits can be entire solutions in the half-plane with infinite Morse index. Let us point out, though, that this example is not under the conditions of Theorem \ref{teo4} since $h_2 = -h_1 r^{-\gamma}$ diverges negatively. Whether the infinite Morse index assumption is necessary or not in Theorem \ref{teo4} {\em (4)} remains as an interesting open problem.

\

\section{Study of the energy functional}
\setcounter{equation}{0}

In this section we will perform a variational study of the energy functional $I$ defined in \eqref{functional}. From this we will derive the proof of Theorems \ref{teo1}, \ref{teo2}, and also \ref{teo3}, provided that Theorem \ref{teo4} holds. This last result, which addresses the blow-up analysis, will be proved later.

As a first preliminary result, let us show that we can always prescribe zero geodesic curvature and constant Gaussian curvature. 

\begin{proposition} \label{easy} Equation \eqref{ecua0} is always solvable if $h=0$ and
$$ K = 0 \mbox{ if } \chi(\S)=0,\ \ \ \ K=-1  \mbox{ if } \chi (\S) <0, \ \ \ \ K=1  \mbox{ if } \chi(\S)=1. $$
The solution is unique up to a constant in the first  case, unique in the second case, and 
 unique up to  M\"obius transformations in the third case.

\end{proposition}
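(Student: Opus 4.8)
The plan is to treat the three cases separately, since each corresponds to a different sign of $\chi(\S)$ and hence a different target curvature. In all cases I seek $u$ solving \eqref{ecua0} with $h=0$ and $\tilde h$ the geodesic curvature of the background metric, which reduces the boundary condition to $\frac{\partial u}{\partial n} + 2\tilde h(x)=0$ on $\partial\S$. Each case will be handled by the direct method of the calculus of variations applied to the appropriate energy functional, so the common structure is: (i) write down the Euler--Lagrange functional whose critical points solve \eqref{ecua0} with the prescribed target, (ii) verify coercivity and weak lower semicontinuity to produce a minimizer, and (iii) read off uniqueness (or uniqueness up to the stated symmetry group) from convexity.

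For $\chi(\S)<0$ (target $K=-1$), the natural functional on $H^1(\S)$ is
\begin{equation*}
J(u)=\int_\S\Big(\tfrac12|\nabla u|^2+2\tilde K u+2e^u\Big)+\oint_{\partial\S}2\tilde h\,u,
\end{equation*}
whose critical points solve $-\Delta u+2\tilde K=-2e^u$ in $\S$ with $\frac{\partial u}{\partial n}+2\tilde h=0$ on $\partial\S$. The key structural fact is that the Gauss--Bonnet relation $\int_\S\tilde K+\oint_{\partial\S}\tilde h=2\pi\chi(\S)<0$ gives the linear term $2\tilde K u$ a net negative coefficient against the constant component of $u$, which (after splitting $u$ into its mean and its zero-average part and using a Poincar\'e inequality plus Jensen on the $e^u$ term) yields coercivity; weak lower semicontinuity follows from convexity of the gradient and exponential terms and compactness of the trace and Sobolev embeddings in dimension two. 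Since $J$ is strictly convex (the second variation is $\int_\S|\nabla\psi|^2+2\int_\S e^u\psi^2>0$ for $\psi\neq 0$, the boundary term having dropped out), the minimizer is the unique critical point. For $\chi(\S)=0$ (target $K=0$) the equation becomes $-\Delta u+2\tilde K=0$, a \emph{linear} Neumann-type problem whose solvability is governed exactly by the compatibility condition $\int_\S\tilde K+\oint_{\partial\S}\tilde h=0$, which is precisely Gauss--Bonnet; here the solution is unique up to an additive constant because adding a constant does not change the left-hand sides, explaining the ``up to a constant'' clause.

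For $\chi(\S)=1$ (the sphere-minus-boundary situation, target $K=1$) the functional loses its convexity and coercivity because the sign in front of $e^u$ flips, so the direct minimization argument breaks down; this is where the M\"obius group enters. The statement's ``unique up to M\"obius transformations'' signals that the relevant surface is conformally a disk (or $\S^2$ after doubling), and the appropriate route is to invoke the classical uniformization/Kazdan--Warner theory for the positively curved case rather than a naive minimization: one reduces to the round metric and the solutions form a single conformal orbit under the noncompact M\"obius group. The hardest part of the whole proposition is therefore the third case, since it cannot be settled by coercivity and one must identify the precise symmetry group responsible for the non-uniqueness. The first two cases are essentially routine applications of convexity and the direct method, with Gauss--Bonnet supplying the sign/compatibility conditions that make coercivity or solvability work; I would write those out carefully and then isolate the $\chi=1$ case as the one requiring the structure of Liouville-type equations and the conformal group action.
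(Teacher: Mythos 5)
Your proposal is correct and follows essentially the same route as the paper: minimization of the strictly convex functional $J$ with coercivity supplied by Gauss--Bonnet when $\chi(\S)<0$, reduction to a linear Neumann problem with the Gauss--Bonnet compatibility condition when $\chi(\S)=0$, and the Uniformization Theorem (conformal equivalence to the hemisphere, with the M\"obius group accounting for non-uniqueness) when $\chi(\S)=1$. No substantive differences.
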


\begin{proof}

If $\chi(\Sigma)<0$,  we need to study the (strictly convex) energy functional:

$$J(u)= \int_{\S} \left(  \frac 1 2 |\nabla u|^2 + 2 \tilde{K}u + 2 e^u \right) + 2 \oint_{\partial \S} \tilde{h} u.$$

Taking into account that $\int_{\S} \tilde{K} + \oint_{\partial \S} \tilde{h} = 2 \pi \chi(\S) <0$, it is easy to show the existence of a unique minimizer for $J$.

If $\chi(\Sigma)=0$, we just need to find a solution to:
$$
\left\{\begin{array}{ll}
\displaystyle -\Delta u +2 \tilde{K}= 0,  \qquad & \text{in $\Sigma$,}\\
\displaystyle \frac{\partial u}{\partial n} + 2 \tilde{h} = 0, \qquad  &\text{on
$\partial\Sigma $.}
\end{array}\right.
$$

This problem is linear and has a unique solution, up to addition of constants.

Finally, if $\chi(\S)=1$ then $\S$ is topologically a disk. By the Uniformization Theorem, $\S$ is conformally equivalent to the hemisphere 
so the statement follows. 
\end{proof}

With this result at hand, we can  assume that our initial metric has constant Gaussian curvature and that $\partial \S$ is geodesic. Hence our problem reduces to \eqref{ecua}. As commented in the introduction, \eqref{ecua} is the Euler-Lagrange equation of the energy functional $I$ defined in \eqref{functional}. The following estimate will be crucial in the variational study of $I$.

\begin{lemma}\label{trace}
Define $\bar{\mathfrak{D}}= \max\{ \mathfrak{D}^+(x), \ x \in \partial \S\}$. For any $\e >0$ there exists $C>0$ such that:

$$ 4  \oint_{\partial \S} h e^{u/2} \leq ( \bar{\mathfrak{D}}  + \e)  \left [  \int_{\Sigma} \frac 1 2 |\nabla u|^2  + 2 |K| e^u \right ]+C.$$
\end{lemma}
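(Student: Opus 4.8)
The plan is to reduce the statement to a sharp trace-type inequality and to prove the latter via the divergence theorem with a carefully chosen vector field. The starting observation is that on $\partial\Sigma$ one has $h\,e^{u/2}=\mathfrak{D}\sqrt{|K|}\,e^{u/2}$, and since $\sqrt{|K|}\,e^{u/2}\ge 0$ and $\bar{\mathfrak{D}}\ge 0$, pointwise $h\,e^{u/2}\le\bar{\mathfrak{D}}\sqrt{|K|}\,e^{u/2}$ (where $\mathfrak{D}\le 0$ the left side is nonpositive, while where $\mathfrak{D}>0$ one uses $\mathfrak{D}=\mathfrak{D}^+\le\bar{\mathfrak{D}}$). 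If $\bar{\mathfrak{D}}=0$ the conclusion is trivial, so assume $\bar{\mathfrak{D}}>0$; it then suffices to prove the trace inequality: for every $\e'>0$ there is $C>0$ with $4\oint_{\partial\Sigma}\sqrt{|K|}\,e^{u/2}\le(1+\e')\big[\int_\Sigma\tfrac12|\nabla u|^2+2|K|e^u\big]+C$, and finally to choose $\e'=\e/\bar{\mathfrak{D}}$.

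To prove this trace inequality I would apply the divergence theorem to $f\,X$ with $f=\sqrt{|K|}\,e^{u/2}$ and $X$ a smooth vector field on $\Sigma$ satisfying $X\cdot n=1$ on $\partial\Sigma$ and $|X|\le 1$ on all of $\Sigma$. Such an $X$ exists: extend the outward unit normal to a unit field $N$ in a tubular neighbourhood of $\partial\Sigma$ (via the gradient of the signed distance) and set $X=\chi N$ for a cutoff $\chi$ with $0\le\chi\le 1$, $\chi\equiv 1$ on $\partial\Sigma$, supported in the neighbourhood; then $|X|=\chi\le 1$ and $X\cdot n=1$ on $\partial\Sigma$. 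Since $K<0$ on the compact $\Sigma$ we have $|K|\ge c_0>0$, so $\sqrt{|K|}$ is bounded below and, for $K\in C^1$, $\nabla\sqrt{|K|}$ is bounded; the merely continuous case is recovered at the end by uniform smooth approximation of $\sqrt{|K|}$, the resulting boundary error $\eta\oint_{\partial\Sigma} e^{u/2}$ being absorbed by the same divergence argument applied to the constant-coefficient function $e^{u/2}$.

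The divergence theorem gives $\oint_{\partial\Sigma}\sqrt{|K|}e^{u/2}=\tfrac12\int_\Sigma\sqrt{|K|}e^{u/2}(X\cdot\nabla u)+\int_\Sigma e^{u/2}(X\cdot\nabla\sqrt{|K|})+\int_\Sigma\sqrt{|K|}e^{u/2}\,\mathrm{div}\,X$. Multiplying by $4$, the decisive term is $2\int_\Sigma\sqrt{|K|}e^{u/2}(X\cdot\nabla u)$, which by Young's inequality $2ab\le 2a^2+\tfrac12 b^2$ (with $a=\sqrt{|K|}e^{u/2}|X|$, $b=|\nabla u|$) and $|X|^2\le 1$ is bounded by $\int_\Sigma\big(2|K|e^u+\tfrac12|\nabla u|^2\big)$ — exactly the target bracket, with the sharp constant. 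The remaining two terms are lower order: bounded by $C\int_\Sigma e^{u/2}$, and using $e^{u/2}\le\delta\,e^u+C_\delta$ together with $e^u\le c_0^{-1}|K|e^u$ they are absorbed into $\e'\cdot 2\int_\Sigma|K|e^u+C$. Combining the pieces yields the claimed trace inequality.

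The main obstacle is obtaining the \emph{sharp} constant $\bar{\mathfrak{D}}$ rather than a larger one: this forces both the optimal choice of the Young parameter and, crucially, the construction of a vector field with $X\cdot n=1$ on $\partial\Sigma$ yet $|X|\le 1$ in the interior, so that the factor $|X|^2$ multiplying $2|K|e^u$ never exceeds $1$. A secondary technical point is the low regularity of $K$, handled by approximation as indicated; one should also verify that all constants depend only on $\e$, $K$, $h$ and $\Sigma$, and not on $u$.
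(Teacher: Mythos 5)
Your proof is correct, and it rests on the same engine as the paper's: the divergence theorem applied to a unit-length near-normal field together with the Young split $2ab\le 2a^2+\tfrac12 b^2$, which is exactly what produces the sharp bracket $\tfrac12\int_\Sigma|\nabla u|^2+2\int_\Sigma|K|e^u$. The difference is in how the constant $\bar{\mathfrak{D}}$ is extracted. The paper uses a partition of unity $\{\phi_j\}$ of $\partial\Sigma$ with small supports, bounds $h^+$ on each patch by its local supremum $\bar h_j$, and compares $\bar h_j\le(\bar{\mathfrak{D}}+\e)\sqrt{|K(x)|}$ throughout the patch by uniform continuity; the coefficient $\sqrt{|K|}$ is pulled out as a constant and never differentiated, so continuity of $K$ suffices with no further work. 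You instead perform the pointwise reduction $h\,e^{u/2}\le\bar{\mathfrak{D}}\sqrt{|K|}\,e^{u/2}$ on the boundary first and then run a single global divergence-theorem computation on $\sqrt{|K|}\,e^{u/2}X$; this removes the partition of unity entirely, at the cost of differentiating $\sqrt{|K|}$, whence your extra smoothing step when $K$ is merely continuous. Both routes are sound; yours is organizationally cleaner, the paper's is regularity-free. One small point to make explicit: the error term $\eta\oint_{\partial\Sigma}e^{u/2}$ from the approximation, once fed back through the divergence argument, contributes $\eta C\int_\Sigma|\nabla u|^2$ as well as $\eta C\int_\Sigma e^u$, so $\eta$ must be fixed small relative to $\e$ before the smooth approximant is chosen --- which is what your final sentence implicitly does, but it deserves a line.
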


\begin{proof}
Take a finite partition of  unity $\{ \phi_j\}_{j=1}^M$ of $\partial \Sigma$ such that for all $j$ one has $diam (supp \, \phi_j) < \delta$ for some fixed $\delta >0$.  We denote by $\S_j$ the support of $\phi_j$. Take a smooth vector field $N$ in $\S$ such that $N(x)=  \nu(x)$ on the boundary, $|N(x)|\leq 1$. Then, we use the divergence theorem to obtain:

\begin{align*}  \oint_{\partial \S} \phi_j e^{u/2} = \oint_{\partial \S} \phi_j  e^{u/2}  N(x) \cdot \nu(x)  \\ = \  \int_\S e^{u/2} \left [ \nabla \phi_j \cdot N  + \phi_j  div\, N  + \frac 1 2 \phi_j \nabla u \cdot N \right ]   \leq \  C \int_\S  e^{u/2} + \frac 1 2  \int_\S   e^{u/2}   |\nabla u| \phi_j.  \end{align*}

Let us set $\bar{h}_j= \sup_{x \in \S_j} \{h^+(x)\}$. If the diameter of $\S_j$ is small enough, one has that $\bar{h}_j \leq ( \bar{\mathfrak{D}}  + \e) \sqrt{|K(x)|}$ for any $x \in \S_j$. Then, we apply Schwartz's inequality:

\begin{align*} 4 \oint_{\partial \S} h e^{u/2} = 4 \sum_{j=1}^M \oint_{\partial \S}  \phi_j h e^{u/2} \leq 4 \sum_{j=1}^M \bar{h}_j \oint_{\partial \S}  \phi_j  e^{u/2} \\  \leq    C \int_\S  e^{u/2} + ( \bar{\mathfrak{D}}  + \e)    \sum_{j=1}^M 2  \int_\S \sqrt{|K|} \phi_j e^{u/2}|\nabla u|   \\ \leq  C \int_\S  e^{u/2} + ( \bar{\mathfrak{D}}  + \e) \left (   \sum_{j=1}^M 2  \int_\S |K| \phi_j e^{u}   + \frac 1 2 \int_{\S} \phi_j |\nabla u|^2  \right ) \\ = C \int_\S  e^{u/2} + ( \bar{\mathfrak{D}}  + \e)   \left ( 2  \int_\S |K| e^{u}   + \frac 1 2 \int_{\S}  |\nabla u|^2 \right ). \end{align*}

We conclude by observing that $\int_\S e^{u/2} \leq \e \int_\S e^{u} + C$ and renaming $\e$ appropriately.

\end{proof}

\begin{proof}[Proof of Theorem \ref{teo1}]

Under the assumptions of Theorem \ref{teo1}, recalling the notation from Lemma \ref{trace} we have $\bar{\mathfrak{D}}<1$, so  we can choose $\e>0 $ with $\bar{\mathfrak{D}} + \e < 1-\e$. By inserting the inequality in Lemma \ref{trace} into the definition of $I$, we obtain:

$$I(u) \geq    \int_{\Sigma} \left( \frac{\e}{2} |\nabla u|^2  + 2 \e  |K| e^u  + 2  \tilde{K} u \right) - C.$$

Since $\tilde{K}<0$,  $$\lim_{u \to \pm \infty}  2 \e  |K| e^u  + 2  \tilde{K} u = +\infty.$$ 

Then $I$ is coercive. It is also standard to check that $I$ is weakly lower semicontinuous, so that it attains its infimum.

\medskip If moreover $h \leq 0$, then it is easy to check that the functional $I$ is strictly convex. As a consequence its minimum corresponds to its unique critical point.

\end{proof}

\begin{proof}[Proof of Theorem \ref{teo2}]
Here $\tilde{K}=0$ and by Lemma \ref{trace} we just obtain:
\begin{equation} \label{piu} I(u) \geq    \int_{\Sigma} \left( \frac{\e}{2} |\nabla u|^2  + 2 \e  |K| e^u \right) - C.\end{equation}

This implies that $I$ is bounded from below but we do not have coercivity in this case. First, let us show that $\inf I <0$. Indeed, take $u=-c$, with $c>0$. Then:

$$I(-c)= e^{-c} \int_{\Sigma} 2 |K| - 4 e^{-c/2} \int_{\partial \Sigma} h,$$
which is negative for large $c$, since $\int_{\partial \Sigma} h >0$.

Take now  a minimizing sequence $u_n$ for $I$. By \eqref{piu}, $\int_{\Sigma} |\nabla u_n|^2 $ is bounded. We conclude if we show that $\fint_{\Sigma} u_n$ is bounded. 

Up to a subsequence, we can assume that 

$$ u_n - \fint_{\Sigma} u_n \weakto u_0 \quad \mbox{ in } H^1(\Sigma) . $$ As a consequence,$$e^{ u_n - \fint_{\Sigma} u_n} \to e^{u_0} \mbox{ in } L^1(\Sigma), \   e^{ (u_n - \fint_{\Sigma} u_n)/2} \to e^{u_0/2} \mbox{ in } L^1(\partial \Sigma).$$

If now $\fint_{\Sigma} u_n \to + \infty$, then

$$ \int_{\S}  |K| e^{u_n} = e^{\fint_{\Sigma} u_n}  \int_{\S}  |K| e^{u_n- \fint_{\Sigma} u_n} \to +\infty,$$
contradicting \eqref{piu}.

So, we now consider the remaining case $\fint_{\Sigma} u_n \to - \infty$. In such a case,

$$ \oint_{\partial \S}  h \, e^{u_n/2} = e^{\frac{\fint_{\Sigma} u_n}{ 2}}  \oint_{\partial \S} h \, e^{(u_n- \fint_{\Sigma} u_n)/2} \to 0.$$

By the definition of $I$ we conclude that

$$ \liminf_{n\to +\infty} I(u_n) \geq 0,$$ which is a contradiction.

\end{proof}

In what follows we show that under the assumptions of Theorem \ref{teo3}, the functional $I$ has a min-max structure. 

\begin{proposition} \label{prop-minmax} Assume that $\tilde{K}=0$, $K < 0$, $\oint_{\partial \S} h <0$ and $\mathfrak{D}(p) >1$ for some $p \in \partial \S$. Then there exist $u_0$, $u_1 \in H^1(\S)$ such that:

$$ c = \inf_\gamma  \max_t \{ I(\gamma(t)):\ t \in [0,1],\ \gamma \in \Gamma\} > \max \{ I (u_0), I(u_1)\}>0, $$

where $\Gamma = \{ \gamma: [0,1] \to H^1(\S) \mbox{ continuous}: \gamma(0)= u_0, \ \gamma(1)=u_1\}$.

\end{proposition}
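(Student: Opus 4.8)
The plan is to produce a genuine mountain-pass geometry by separating two low-energy configurations with a barrier of strictly positive height. The two ``wells'' will be, on one side, a large negative constant $u_0=-c_0$ (whose energy is small and \emph{positive}, precisely because $\oint_{\partial \Sigma} h<0$) and, on the other side, a test function $u_1$ concentrated at a point $p\in\partial\Sigma$ with $\mathfrak{D}(p)>1$, modeled on the finite-mass solutions of the limit problem \eqref{eclimit} classified in \cite{zhang}. The barrier between them will be the level set where the boundary term of $I$ changes sign. Note that $\tilde{K}=0$ here, so $I(u)=\int_{\Sigma}\bigl(\tfrac12|\nabla u|^2+2|K|e^u\bigr)-4\oint_{\partial\Sigma}h\,e^{u/2}$.

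The first and crucial step I would carry out is to introduce the constraint functional $J(u)=\oint_{\partial\Sigma}h\,e^{u/2}$, the separating set $\mathcal{S}=\{u\in H^1(\Sigma):J(u)=0\}$, and to show that $\delta_0:=\inf_{\mathcal{S}}I$ is strictly positive. On $\mathcal{S}$ the boundary term vanishes, so $I(u)=\int_{\Sigma}\bigl(\tfrac12|\nabla u|^2+2|K|e^u\bigr)\ge0$. If $I(u_n)\to 0$ along $u_n\in\mathcal{S}$, then $\int_{\Sigma}|\nabla u_n|^2\to0$ and, since $|K|\ge k_0>0$, also $\int_{\Sigma}e^{u_n}\to0$; writing $u_n=\fint_{\Sigma}u_n+w_n$, the Poincar\'e inequality gives $w_n\to0$ in $H^1(\Sigma)$ and then $\fint_{\Sigma}u_n\to-\infty$. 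By the Moser--Trudinger trace embedding $\oint_{\partial\Sigma}h\,e^{w_n/2}\to\oint_{\partial\Sigma}h<0$, whence
\[
J(u_n)=e^{\frac12\fint_{\Sigma}u_n}\oint_{\partial\Sigma}h\,e^{w_n/2}<0
\]
for large $n$, contradicting $u_n\in\mathcal{S}$. This forces $\delta_0>0$. I expect this strict positivity to be the main obstacle, since it is exactly what turns $\mathcal{S}$ into a true mountain rather than a saddle of height zero.

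Next I would position the two endpoints on opposite sides of $\mathcal{S}$ and below level $\delta_0$. For $u_0=-c_0$ one computes $I(u_0)=2e^{-c_0}\int_{\Sigma}|K|-4e^{-c_0/2}\oint_{\partial\Sigma}h>0$ and $J(u_0)=e^{-c_0/2}\oint_{\partial\Sigma}h<0$, with $I(u_0)\to0^+$; hence $c_0$ can be fixed so that $0<I(u_0)<\delta_0$. For the far endpoint I would take $u_1$ to be a rescaled bubble concentrated at $p$, where $h(p)=\mathfrak{D}(p)\sqrt{|K(p)|}>0$: the test-function estimates carried out in the Appendix give $I(u_1)\to-\infty$ as the concentration parameter diverges, while the boundary mass accumulates near $p$ where $h>0$, so $J(u_1)>0$. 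Fixing the parameter large enough yields $I(u_1)<0<I(u_0)$ together with $J(u_1)>0$.

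Finally, I would close the argument topologically. Since both $I$ and $J$ are continuous on $H^1(\Sigma)$ (again by Moser--Trudinger), for every $\gamma\in\Gamma$ the real function $t\mapsto J(\gamma(t))$ is continuous with $J(\gamma(0))<0<J(\gamma(1))$, hence it vanishes at some $t^\ast$, i.e. $\gamma(t^\ast)\in\mathcal{S}$. Therefore $\max_t I(\gamma(t))\ge I(\gamma(t^\ast))\ge\delta_0$ for every path, and taking the infimum over $\gamma$ gives $c\ge\delta_0$. Combining the previous estimates I obtain the required chain
\[
c\ \ge\ \delta_0\ >\ \max\{I(u_0),I(u_1)\}\ =\ I(u_0)\ >\ 0,
\]
which is exactly the assertion of the proposition.
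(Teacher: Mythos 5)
Your proof is correct, but it runs along a genuinely different separating hypersurface than the paper's. The paper separates $u_0$ from $u_1$ by a small level set of the \emph{unsigned} boundary mass, $\{u:\oint_{\partial\Sigma}e^{u/2}=\delta\}$, and proves in Lemma \ref{mp-geometry} that $I>\e$ there: if $\liminf I(u_n)\le 0$ with $\oint_{\partial\Sigma}e^{u_n/2}\to0$, then $\nabla u_n\to 0$ and $I(u_n)\ge -4e^{\fint_\Sigma u_n/2}\oint_{\partial\Sigma}he^{(u_n-\fint_\Sigma u_n)/2}$, whose last factor tends to $\oint_{\partial\Sigma}h<0$, a contradiction. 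You instead separate by the nodal set $\{J=0\}$ of the \emph{signed} integral $J(u)=\oint_{\partial\Sigma}he^{u/2}$, where nonnegativity of the barrier is automatic (the boundary term of $I$ vanishes there) and only the strict positivity $\delta_0>0$ requires the contradiction argument, which you carry out correctly via $\fint_\Sigma u_n\to-\infty$ and $J(u_n)<0$. What your route costs is an extra verification at the far endpoint: Lemma \ref{lemma:notbounded} only supplies $I(u_1)\to-\infty$ and $\oint_{\partial\Sigma}e^{u_1/2}\to+\infty$, which is enough to put $u_1$ on the far side of the paper's level set, whereas you additionally need $J(u_1)>0$. This does hold for the Appendix test functions --- estimate \eqref{testfunction3} shows the boundary integral concentrates near $p$, where $\mathfrak{D}>1>0$, with only an $O(1)$ contribution elsewhere by \eqref{int7} --- but it is not literally the statement of Lemma \ref{lemma:notbounded}, so you are right to invoke the test-function computations directly rather than that lemma. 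Both arguments are sound; yours makes the positivity of the mountain more transparent, while the paper's demands less information about $u_1$.
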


The proof of Proposition \ref{prop-minmax} requires two preliminary results:

\begin{lemma} \label{lemma:notbounded} If $K<0$ and $\mathfrak{D}(p)>1$ for some $p \in \partial \S$, then there exists a sequence $u_n$ such that $I(u_n) \to - \infty $ and $\oint_{\partial \S} e^{u_n/2} \to + \infty$. \end{lemma}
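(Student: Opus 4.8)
The plan is to exhibit an explicit concentrating family built from the critical one-dimensional profile of the flat limit problem, for which the boundary term beats the Dirichlet and area terms precisely because $\mathfrak{D}(p)>1$. First I would fix a point $p\in\partial\S$ with $\mathfrak{D}(p)>1$ and choose conformal coordinates $(s,t)$ on a neighborhood of $p$, mapping it onto a half-ball $B_0^+(r)\subset\RD$ with $p\mapsto 0$ and $\partial\S\mapsto\{t=0\}$, writing the metric as $e^{2\phi}(ds^2+dt^2)$. In these coordinates the Dirichlet energy is conformally invariant, while the area and boundary integrands carry the weights $e^{2\phi}$ and $e^{\phi}$; setting $K_0=|K(p)|e^{2\phi(0)}$ and $h_0=h(p)e^{\phi(0)}$, the scale invariance of $\mathfrak{D}$ gives $h_0/\sqrt{K_0}=\mathfrak{D}(p)$, which is the only place the conformal factor genuinely enters. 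As test functions I would take truncations of the $1$-D solution, namely
\[
u_\lambda(s,t)=\chi(s,t)\,\Big(2\log\frac{\lambda}{1+\lambda t}-\log K_0\Big),\qquad \lambda\to+\infty,
\]
extended by $0$ outside $B_0^+(r)$, where $\chi$ is a fixed cut-off equal to $1$ on $B_0^+(r/2)$. Note that the profile already crosses zero at $t\approx K_0^{-1/2}=O(1)$, so it is of size $O(1)$ away from the concentration layer $\{t\lesssim\lambda^{-1}\}$ and the truncation is essentially free.

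Next I would compute the three terms of $I(u_\lambda)$ to leading order. On the window $\{\chi=1\}$ the gradient and the area density both concentrate at the scale $t\sim\lambda^{-1}$ near the boundary, and a direct calculation gives that $\tfrac12\int_{\S}|\nabla u_\lambda|^2$ and $2\int_{\S}|K|e^{u_\lambda}$ are each asymptotic to the same multiple of $r\lambda$, while $4\oint_{\partial\S}h\,e^{u_\lambda/2}$ is asymptotic to that same multiple of $r\lambda$ times $h_0/\sqrt{K_0}=\mathfrak{D}(p)$. Adding them,
\[
I(u_\lambda)=c\,r\lambda\,\big(1-\mathfrak{D}(p)\big)+o(\lambda)
\]
for a positive constant $c$, so that $\mathfrak{D}(p)>1$ forces $I(u_\lambda)\to-\infty$; at the same time $\oint_{\partial\S}e^{u_\lambda/2}\sim e^{\phi(0)}\,r\,\lambda/\sqrt{K_0}\to+\infty$. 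Choosing $\lambda=\lambda_n\to+\infty$ then yields the claimed sequence $u_n=u_{\lambda_n}$. Conceptually, the shift by $-\log K_0$ encodes the balanced amplitude/scale relation that matches the Dirichlet and area contributions, and this is exactly the configuration saturating the trace inequality of Lemma~\ref{trace} at the optimal constant $1$: crossing the threshold $\mathfrak{D}=1$ turns the borderline bound into a genuine descent to $-\infty$.

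The main technical point is the error analysis of the truncation, which I would carry out to confirm all corrections are $o(\lambda)$. The lateral cut-off in $s$ contributes a gradient term of the form $\int|\partial_s\chi|^2\,u_\lambda^2$; since $\int_0^{T}\log^2\!\big(\lambda/(1+\lambda t)\big)\,dt$ stays bounded as $\lambda\to+\infty$ (the change of variables $\tau=\lambda t$ turns it into a convergent integral of $\log^2$), this term is only $O(1)$. For the boundary and area integrals one must check that the transition region $\{r/2<|s|<r\}$ does not contribute at order $\lambda$: there the integrands are of size $\lambda^{\chi(s)}$ and $\lambda^{2\chi(s)-1}$, and $\int_{r/2<|s|<r}\lambda^{\chi(s)}\,ds=O(\lambda/\log\lambda)=o(\lambda)$, so both integrals are dominated by the core window $\{\chi=1\}$ and share a common length factor, making the leading coefficient genuinely proportional to $1-\mathfrak{D}(p)$. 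The remaining bookkeeping — replacing the continuous weights $K$, $h$, $\phi$ by their values at $p$ over the shrinking layer, with controlled error — is routine. The only delicate step is thus this transition-region estimate; the leading-order balance itself is robust and produces the decisive sign $1-\mathfrak{D}(p)<0$.
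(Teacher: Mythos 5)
Your overall strategy --- an explicit concentrating family that saturates the trace inequality of Lemma~\ref{trace} and wins because $\mathfrak{D}(p)>1$ --- is the same as the paper's, but the profile is different: the paper (Appendix, Lemma~\ref{l:test}) uses the \emph{finite-mass two-dimensional bubble} $\varphi_{\mu,p}(x)=\log\frac{4\mu^2}{(\mu^2 d^2(x,q)-1)^2}$ centred at a point $q$ \emph{outside} $\Sigma$, which decays away from $p$ and is globally defined on $\Sigma$, so no cut-off is needed; you use the one-dimensional layer $2\log\frac{\lambda}{1+\lambda t}-\log K_0$, which does not decay in $s$ and therefore must be truncated laterally. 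That choice is where your argument has a genuine gap.

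The problem is in the Dirichlet energy over the transition region $\{r/2<|s|<r\}$. Writing $u_\lambda=\chi v_\lambda$, you have
\[
|\nabla u_\lambda|^2=\chi^2|\nabla v_\lambda|^2+2\chi v_\lambda\,\nabla\chi\cdot\nabla v_\lambda+v_\lambda^2|\nabla\chi|^2,
\]
and you only estimate the last term (correctly, $O(1)$) and the area/boundary integrals over the transition region (correctly, $o(\lambda)$; the cross term is $O(\log^2\lambda)$ as well). But the first term is \emph{not} negligible there: since $\int_0^\infty\frac{4\lambda^2}{(1+\lambda t)^2}\,dt=4\lambda$ concentrates at the scale $t\sim\lambda^{-1}$ where $\chi(s,t)\approx\chi(s,0)$, one gets
\[
\frac12\int_{\{r/2<|s|<r\}}\chi^2|\nabla v_\lambda|^2
=2\lambda\int_{\{r/2<|s|<r\}}\chi^2(s,0)\,ds+O(\log\lambda),
\]
a \emph{positive} contribution of order $\lambda$ which is not matched by any area or boundary gain in that region (those are $o(\lambda)$ precisely because $\chi<1$ there, as you note). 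Hence the true expansion is $I(u_\lambda)=4\lambda r\bigl(1-\mathfrak{D}(p)\bigr)+4\lambda\int_{\{r/2<|s|<r\}}\chi^2(s,0)\,ds+o(\lambda)$, and with your fixed cut-off (transition of width comparable to $r$) the bracket is negative only for $\mathfrak{D}(p)$ larger than roughly $5/4$, not for all $\mathfrak{D}(p)>1$. So the assertion ``all corrections are $o(\lambda)$'' is false as stated. The gap is repairable: take the transition region of width $\e r$ with $\e$ small compared to $\mathfrak{D}(p)-1$ (the term $\int v_\lambda^2|\nabla\chi|^2$ stays $O(1/\e)=O(1)$ for fixed $\e$), or simply use the paper's decaying bubble, which avoids truncation altogether. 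A minor additional slip: your verbal bookkeeping says the boundary term is ``the same multiple of $r\lambda$ times $\mathfrak{D}(p)$,'' which would give a coefficient $2-\mathfrak{D}(p)$; in fact the boundary term is the common multiple times $2\mathfrak{D}(p)$ (namely $4\mathfrak{D}(p)\lambda r$ against $2\lambda r+2\lambda r$), which is what produces the stated factor $1-\mathfrak{D}(p)$.
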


\begin{proof}
The proof of this result is postponed to the Appendix. \end{proof}

\begin{lemma} \label{mp-geometry} If $\tilde{K}=0$ and $\oint_{\partial \S} h<0$, then there exists $\e>0$, $\delta>0$ such that 

$$ I(u) > \e \quad \forall \, u \in H^1(\S) \quad \mbox{ with } \quad \oint_{\partial \S} e^{u/2} = \delta.$$

\end{lemma}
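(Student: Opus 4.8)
The plan is to argue by contradiction, aiming to show that the infimum of $I$ over the constraint set $\{\oint_{\partial\S}e^{u/2}=\delta\}$ is bounded below by a positive multiple of $\delta$ as $\delta\to 0^+$. So suppose the conclusion fails; then there are numbers $\delta_n=\oint_{\partial\S}e^{u_n/2}\to 0^+$ and functions $u_n\in H^1(\S)$ with $\limsup_n I(u_n)/\delta_n\le 0$, in particular $I(u_n)\le C\delta_n$ for large $n$. The first step is the elementary bound, using $h=h^++h^-$ with $h^-\le 0$,
$$-4\oint_{\partial\S}h\,e^{u_n/2}\ \ge\ -4\oint_{\partial\S}h^+e^{u_n/2}\ \ge\ -4\|h^+\|_\infty\,\delta_n.$$
Since with $\tilde K=0$ the two interior terms of $I$ are nonnegative, this forces
$$\int_\S\Big(\tfrac12|\nabla u_n|^2+2|K|e^{u_n}\Big)=I(u_n)+4\oint_{\partial\S}h\,e^{u_n/2}\le C'\,\delta_n\to 0.$$
In particular $\|\nabla u_n\|_{L^2(\S)}\to 0$, and since $|K|$ is bounded away from $0$ we also get $\int_\S e^{u_n}\to 0$.

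Next I would normalize. Writing $\bar u_n=\fint_\S u_n$ and $w_n=u_n-\bar u_n$, we have $\nabla w_n=\nabla u_n\to 0$ in $L^2$ and $\int_\S w_n=0$, so $w_n\to 0$ in $H^1(\S)$ by Poincar\'e. By the Moser--Trudinger inequality in the interior and in its trace form, this upgrades to $e^{w_n}\to 1$ in $L^1(\S)$ and $e^{w_n/2}\to 1$ in $L^1(\partial\S)$, whence $\oint_{\partial\S}h\,e^{w_n/2}\to\oint_{\partial\S}h$. From $\int_\S e^{u_n}=e^{\bar u_n}\int_\S e^{w_n}\to 0$ together with $\int_\S e^{w_n}\to|\S|$ we deduce $\bar u_n\to-\infty$, and the constraint gives
$$e^{\bar u_n/2}=\frac{\delta_n}{\oint_{\partial\S}e^{w_n/2}}=\frac{\delta_n}{|\partial\S|+o(1)}.$$

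Finally, I would compute the boundary term to leading order:
$$-4\oint_{\partial\S}h\,e^{u_n/2}=-4\,e^{\bar u_n/2}\oint_{\partial\S}h\,e^{w_n/2}=\Big(\frac{-4\oint_{\partial\S}h}{|\partial\S|}+o(1)\Big)\delta_n.$$
Since the interior terms are nonnegative, $I(u_n)\ge -4\oint_{\partial\S}h\,e^{u_n/2}$, and because $\oint_{\partial\S}h<0$ the coefficient $-4\oint_{\partial\S}h/|\partial\S|$ is strictly positive; hence $\liminf_n I(u_n)/\delta_n\ge -4\oint_{\partial\S}h/|\partial\S|>0$, contradicting $\limsup_n I(u_n)/\delta_n\le 0$. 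Undoing the contradiction, there is $\delta>0$ small so that $I(u)\ge \tfrac12\big(-4\oint_{\partial\S}h/|\partial\S|\big)\delta=:\e>0$ on the constraint set, which is the claim.

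The main obstacle is that, a priori, the boundary term $\oint_{\partial\S}h\,e^{u/2}$ can be \emph{positive} and even of size comparable to $\delta$, if $e^{u/2}$ concentrates on the portion of $\partial\S$ where $h>0$. The whole point is that such concentration would cost Dirichlet energy, which is excluded by the bound $\|\nabla u_n\|_{L^2}\to 0$ obtained in the first step; this is precisely where the (trace) Moser--Trudinger inequality is essential, since it converts $H^1$-smallness of $w_n$ into $L^1$-convergence of $e^{w_n/2}$ on $\partial\S$ and thereby lets the sign condition $\oint_{\partial\S}h<0$ dominate.
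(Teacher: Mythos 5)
Your proof is correct and follows essentially the same route as the paper's: argue by contradiction, deduce $\|\nabla u_n\|_{L^2}\to 0$ from the smallness of the constraint integral, normalize by the mean value $\fint_\Sigma u_n$, and use the compactness of $u\mapsto e^{u/2}|_{\partial\Sigma}$ together with $\oint_{\partial\Sigma}h<0$ to force the boundary term to be positive. The only difference is bookkeeping: you track the rate $I(u_n)/\delta_n$, which makes the final contradiction quantitative and lets you dispense with the paper's preliminary step showing that the constrained infimum $\alpha_\delta$ is attained.
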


\begin{proof}

First we claim that for any $\delta>0$, the infimum:

$$ \alpha_\delta= \inf \left\{ I(u): u \in H^1(\S) \mbox{ with } \oint_{\partial \S} e^{u/2} = \delta\right\}$$
is attained. Recall the definition of $I$ given in \eqref{functional}; this implies that $\alpha_\delta \neq -\infty$. Moreover, for a minimizing sequence $u_n$ the integral $\int_\Sigma |\nabla u_n|^2 $ must be bounded, otherwise $I(u_n) \to +\infty$. Up to a subsequence,

$$ u_n - \fint_{\Sigma} u_n \weakto u_0 \mbox{ in } H^1(\Sigma), $$ which implies that $$e^{ u_n - \fint_{\Sigma} u_n} \to e^{u_0} \mbox{ in } L^1(\Sigma), \   e^{ (u_n - \fint_{\Sigma} u_n)/2} \to e^{u_0/2} \mbox{ in } L^1(\partial \Sigma).$$

Moreover, 

$$ 0<\delta = \oint_{\partial \S} e^{u/2} = e^{\frac{\fint_{\Sigma} u_n}{ 2}}  \oint_{\partial \S} h e^{(u_n- \fint_{\Sigma} u_n)/2},$$
which implies that $\fint_{\Sigma} u_n $ is bounded. Hence $u_n$ is bounded in $H^1(\Sigma)$ and the conclusion follows from  standard arguments.

\medskip

We now turn our attention to the proof of Lemma \ref{mp-geometry}. By contradiction, assume that there exists $u_n \in H^1(\S)$ such that:

$$ \liminf_n I(u_n) \leq 0, \qquad \qquad  \oint_{\partial \S} e^{u_n/2} \to 0.$$ 

By the definition of $I$ it readily follows that $\int_{\Sigma } |\nabla u_n|^2 \to 0$, hence $u_n - \fint_{\Sigma} u_n \to 0$ in $H^1(\S)$. Therefore,

$$ I(u_n)= \int_{\Sigma} \frac 1 2 |\nabla u_n|^2  + 2 |K| e^{u_n} - 4 \int_{\partial \Sigma} h e^{u_n/2} \geq - 4 e^{\fint_{\Sigma} u_n/2}\int_{\partial \Sigma} h e^{(u_n/-\fint_{\Sigma} u_n)/2} ,$$
but $\int_{\partial \Sigma} h e^{(u_n/-\fint_{\Sigma} u_n)/2}  \to \int h <0$, hence $I(u_n) >0$ for sufficiently large $n$.

\end{proof}

\begin{proof}[Proof of Proposition \ref{prop-minmax}]

Fix $\e>0$, $\delta>0$ as given by Lemma \ref{mp-geometry}. Take a constant $c>0$  so large   that 

$$I(-c) = 2 \int_{\S} |K| e^{-c} - 4 \oint_{\partial \S} h e^{-c/2} \in \left( 0, \frac{\e}{2} \right), $$ and moreover $\oint_{\partial \S} e^{-c/2} < \delta$. We define $u_0=-c$. 

By Lemma \ref{lemma:notbounded}, there exists also $u_1$ with $\oint_{\partial \S} e^{u_1/2} > \delta$ satisfying  $I(u_1) <0$. 

Observe that for any $\gamma \in \Gamma$ there exists $t \in (0,1)$ such that $\oint_{\partial \S} e^{\gamma(t)/2} = \delta$. As a consequence, $c \geq \e > \max \{I(u_0),\ I(u_1)\}>0$.

\end{proof}

The main issue in order to prove Theorem \ref{teo3} is the fact that we do not know whether the Palais-Smale condition holds or not for $I$. The strategy consists in producing a sequence of solutions $u_n$ to a perturbed version of \eqref{ecua}, namely \eqref{ecua-compact} (where $\tilde{K}_n$ and 
$\tilde{h}_n$ are as in Theorem \ref{teo4}), with Morse index not exceeding $1$.  The main tool is a monotonicity argument originally attributed 
to Struwe (see \cite{str-mon}), combined with a deformation argument from 
\cite{FG}.

For $\e$ close to zero, we consider the following family of functionals 
$$
  I_\e(u) = I(u) + \e J(u), 
$$
where $I$ is as in \eqref{functional}, and 
$$
  J(u) = \int_{\Sigma} \left( |\n u|^2 + e^u - u \right). 
$$
Notice that, since 
$$
   e^u - u \to + \infty \qquad \hbox{ as } u \to \pm \infty, 
$$
$J(u)$ is coercive on $H^1(\Sigma)$. 

For a small $\e_0 > 0$ and $\e \in (0, \e_0 )$ we can reason as in the 
proof of Proposition \ref{prop-minmax}, to find 
 two elements $u_0, u_1 \in H^1(\Sigma)$ for 
which 
\begin{equation}\label{eq:min-max-delta}
  c_\e := \inf_{\g \in \G} \max_{t \in [0,1]} I_\e(\g(t)) > \max \{ I_\e(u_0), I_\e (u_1) \} + \d > \d,
\end{equation}
where $\d > 0$ is a fixed positive number and where $\G$, again, stands for the class of admissible curves 
$$
  \G = \left\{ \g \in C([0,1];H^1(\S)) \; : \; \g(0) = u_0, \g (1) = u_1  \right\}. 
$$
Under these assumptions, the function $\e \mapsto c_\e$ is monotone non-decreasing 
and  therefore $c_\e$ is a.e.  differentiable in $\e$.

Consider a value $\e$ where $c_\e$ is 
differentiable, and let $\e_n \nearrow \e$. 
It is shown in the proof of Proposition 2.1 in \cite{jj} that at such a value, 
if $\gamma_n \in \Gamma$ satisfies 
$$
  \max_{t \in [0,1]} I_{\e} (\gamma_n(t)) \leq c_{\l} + (2 + c'_\l) (\l - \l_n), 
$$
and if $t$ is such that 
$$
  I_\e(\gamma_n(t)) \geq c_\e - (\e - \e_n), 
$$
then $\|\gamma_n(t)\| \leq M$, with $M$ depending on $c'_\e$.

Given $\a > 0$, define 
\begin{equation}\label{eq:Falpha}
  F_\a = \left\{ u \in H^1(\Sigma) \; : \; \|u\| \leq M + 1 \hbox{ and } |I_\e - c_\e| \leq \a  \right\}. 
\end{equation}
The classical deformation lemma is used in \cite{jj} to prove that $I_\e$ has a critical point in $F_\a$. 

On the other hand, in \cite{FG} the following result is shown. Assume that a functional $\mathcal{I}$ on a Hilbert space 
$H$ is of class $C^2$ has a mountain-pass structure with mountain-pass level $c$, and assume there 
exists $\rho > 0$ such that $\mathcal{I}'$ and $\mathcal{I}''$ are uniformly H\"older continuous on the set 
$\{c - \rho \leq \mathcal{I} \leq c + \rho\}$. Then a Palais-Smale sequence $x_n$ at level $c$ is found, satisfying also the 
following second-order property (see Corollary 1 in \cite{FG}).

\medskip {\bf (i)} \quad If $\mathcal{I}''(x_n)[u,u] < - \frac{1}{n} \|u\|^2$ for all $u$ belonging to a subspace $E \subseteq H$, 
then dim$(E) \leq 1.$

\medskip

 Such a sequence is found starting from 
curves $f(t)$, $t \in [0,1]$, for which $\sup_{t \in [0,1]} \mathcal{I}(f(t))$ is close to $c$, and then deforming 
them properly at points for which $\mathcal{I}(f(t))$ is large enough. Such a deformation displaces the path $f(t)$ 
near its highest level only by a small amount, see formula (21) in the proof of Theorem 1.bis in 
\cite{FG}. 

In our case, level sub-supersets of the form $\{c_\e - \rho \leq I_\e \leq c_\e + \rho\}$ are unbounded, and 
therefore the above uniform H\"older continuity property is not guaranteed. However, by the 
above monotonicity argument, the deformation in \cite{FG} can be localized to the  
set $F_\a$ in \eqref{eq:Falpha}, where such uniform H\"older continuity holds true. As a consequence, we obtain 
a bounded Palais-Smale sequence for $I_\e$ at level $c_\e$ also satisfying the property {\bf (i)}. 
Recall that the maps $u \mapsto e^{u}$ and $u \mapsto e^{u/2}|_{\partial \S}$ are compact 
from $H^1(\S)$ into $L^1(\S)$ and $L^1(\partial \S)$ respectively. Therefore, passing to a 
proper subsequence, the limit of the latter Palais-Smale sequence exists and is a critical 
point of $I_\e$ with Morse index less or equal to $1$.

We can summarize the above discussion in the next proposition. 

\

\begin{proposition}\label{p:monot}
 Suppose that we are under the assumptions of Theorem \ref{teo3}. Then there exists a sequence  $\e_n\searrow 0$ and solutions $u_n$ to problem \eqref{ecua-compact}, with 
 $$
   \tilde{K}_n = \frac{- \e_n /2 }{1+2 \e_n}; \quad K_n(x) = \frac{K(x) - \e_n/2}{1+2\e_n}; 
   \quad h_n(x) = \frac{h(x)}{1+2\e_n}; \quad \tilde{h}_n=0.
 $$
Moreover $ind(u_n) \leq 1$. 
 \end{proposition}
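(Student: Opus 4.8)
The plan is to assemble the machinery developed above into a single existence statement; the only genuinely new computation is the identification of the Euler--Lagrange equation of $I_\e$ with \eqref{ecua-compact}. First I would fix $\e_0>0$ small and check that for every $\e\in(0,\e_0)$ the perturbed functional $I_\e=I+\e J$ retains the mountain-pass geometry \eqref{eq:min-max-delta}. Since $J\ge 0$ is coercive and the reference points $u_0,u_1$ produced in the proof of Proposition \ref{prop-minmax} can be chosen uniformly in $\e$, the strict inequality separating the min-max level $c_\e$ from $\max\{I_\e(u_0),I_\e(u_1)\}$ survives the addition of a sufficiently small $\e J$, giving a mountain-pass level $c_\e>\d$ for each such $\e$.

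Next, because $\e\mapsto c_\e$ is monotone non-decreasing (adding $\e J$ only raises every path), it is differentiable at almost every $\e\in(0,\e_0)$. At each such differentiability point I would invoke the monotonicity trick in the form recalled above: following \cite{jj}, the almost-optimal paths can be taken so that their high sublevels remain inside a fixed ball of radius $M+1$, which confines the analysis to the bounded set $F_\a$ of \eqref{eq:Falpha}. On $F_\a$ the Nemytskii maps $u\mapsto e^u$ and $u\mapsto e^{u/2}|_{\partial\S}$, together with their derivatives, are uniformly H\"older continuous, so the Ghoussoub--Fang deformation of \cite{FG} can be carried out there and yields a bounded Palais--Smale sequence at level $c_\e$ which additionally satisfies the second-order property {\bf (i)}. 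The compactness of these exponential maps from $H^1(\S)$ into $L^1(\S)$ and $L^1(\partial\S)$ then lets me extract a subsequence converging strongly enough to pass to the limit in $I_\e'=0$ and in {\bf (i)}; the limit $u_\e$ is a critical point of $I_\e$ with $ind(u_\e)\le 1$.

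Choosing any sequence $\e_n\searrow 0$ of differentiability points and setting $u_n=u_{\e_n}$, it remains only to verify that critical points of $I_\e$ solve \eqref{ecua-compact} with the asserted coefficients. Writing $I_\e$ out with $\tilde K=0$ collects the Dirichlet term into coefficient $\tfrac12+\e$ and the exponential term into $2|K|+\e$, so the first variation, after integration by parts, produces $-(1+2\e)\Delta u+(2|K|+\e)e^u-\e=0$ in $\S$ and $(1+2\e)\,\partial_n u-2h\,e^{u/2}=0$ on $\partial\S$. Dividing by $1+2\e$ and using $|K|=-K$ gives exactly $\tilde K_n=\frac{-\e_n/2}{1+2\e_n}$, $K_n=\frac{K-\e_n/2}{1+2\e_n}$, $h_n=\frac{h}{1+2\e_n}$ and $\tilde h_n=0$, which is the claim.

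I expect the main obstacle to lie in the two steps relying on \cite{FG}: first, localizing the deformation to $F_\a$ so that the required uniform H\"older continuity of $I_\e'$ and $I_\e''$ is genuinely available despite the unboundedness of the global sublevels $\{c_\e-\rho\le I_\e\le c_\e+\rho\}$; and second, transferring the second-order information {\bf (i)} to the limit, so that the Morse-index bound $ind(u_n)\le 1$ is preserved. Everything else (the mountain-pass geometry, the monotonicity, and the Euler--Lagrange computation) is either established above or routine.
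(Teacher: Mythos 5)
Your proposal is correct and follows essentially the same route as the paper: the same perturbation $I_\e = I + \e J$, the same uniform mountain-pass geometry, Jeanjean's monotonicity trick to confine almost-optimal paths to the bounded set $F_\a$, the Fang--Ghoussoub deformation localized there to produce a bounded Palais--Smale sequence with the second-order property, and compactness of the exponential traces to pass to the limit. Your Euler--Lagrange computation correctly recovers the stated coefficients $\tilde{K}_n$, $K_n$, $h_n$, $\tilde{h}_n$, a step the paper leaves implicit.
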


\


\medskip

\begin{pfn}{\em \, of Theorem \ref{teo3}}
We consider the sequence $u_n$ given by Proposition \ref{p:monot}, with $\e_n \searrow 0$. Observe in particular that $\chi_n \leq 0$. By Theorem \ref{teo4} {\em (4)} and our assumptions on the function 
$\mathfrak{D}$, which has no critical points in $\{\mathfrak{D} = 1\}$, $u_n$ must be uniformly bounded from above. 

By these upper bounds on $u_n$, the right-hand sides of \eqref{ecua-compact} are uniformly bounded. 
By standard elliptic regularity theory, a subsequence of $u_n - \fint_\S u_n$ converges 
in $C^1(\Sigma)$ to some function $w$.

If we assume by contradiction that $\inf_\Sigma u_n \to - \infty$, then  $\fint_\S u_n \to -\infty$. However this would imply 

$$ e^{u_n} = e^{u_n - \fint_\S u_n} e^{ \fint_\S u_n} \to 0,$$
uniformly on $\S$. As a consequence, $w$ is harmonic in $\Sigma$ and satisfies homogeneous
Neumann boundary conditions. Since $\int_{\Sigma} w = 0$, we must have $w \equiv 0$ 
and therefore, recalling the notation from the beginning of this section, $I_{\e_n}(u_n) \to 0$. 
This is however in contradiction to the fact that $u_n$ is a min-max sequence, with min-max 
value satisfying \eqref{eq:min-max-delta}. 
%
%
%
%
%
%
%
%
\end{pfn}

\

\section{Solutions of the limit problem in  the half-space and their Morse index}\label{app-A}\setcounter{equation}{0}

In this section we study the Morse index of  solutions to the limit problem:

\begin{equation}\label{equaplane0}
\left\{\begin{array}{ll}
\displaystyle{-\Delta v = 2 K_0 e^v},  \qquad & \text{in $\mathbb{R}^2_+$,}\\
\displaystyle{\frac{\partial v}{\partial n} = 2h_0 e^{v/2}}, \qquad  &\text{on
$\partial \mathbb{R}^2_+$,}
\end{array}\right.
\end{equation}
where $K_0<0$, $h_0$ are constants, and $\R^2_+ =\{ (s,t) \in \R^2:\ t \geq 0\}$. 
The solutions of this problem have been classified in \cite{mira-galvez, zhang}, and they are as follows:

\begin{theorem} Define:  $\mathfrak{D_0} = \frac{h_0}{\sqrt{|K_0|}}.$ The following assertions hold true:

\begin{enumerate}
\item If $\mathfrak{D_0}<1$ then \eqref{equaplane0} does not admit any solution.

\item If $\mathfrak{D_0}=1$ the only solutions of \eqref{equaplane0} are given by:

\begin{equation} \label{profile2} v_\lambda(s,t)= 2 \log \Big ( \frac{\lambda}{1 + \lambda t} \Big ) - \log |K_0|, \ \lambda >0. \end{equation}

\item If $\mathfrak{D_0}>1$ there exists a  locally univalent holomorphic map $g$ from $\mathbb{R}^2_+$ to a disk of geodesic curvature $\mathfrak{D_0}$ in the Poincar\'{e} disk $\mathbb{H}^2$ such that

$$v(z)= 2 \log \left( \frac{2|g'(z)|}{1-|g(z)|^2} \right)  - \log |K_0|.$$

We recall that $\mathbb{H}^2$ is the unit disk in $\mathbb{C}$ equipped with the metric $\frac{4}{(1-|z|^2)^2 } \, dz$. 

\medskip Moreover, $g$ is a M\"{o}bius map if and only if

\begin{equation} \label{finitemass} \mbox{ either } \int_{\R^2_+} e^v < + \infty \ \mbox{ or }  \oint_{\partial \R^2_+} e^{v/2} < +\infty.\end{equation}
In such case $v$ can be written as:

\begin{equation}\label{bubble} v_\lambda(s,t)= 2 \log \left( \frac{2 \lambda}{(s-s_0)^2 + (t+t_0)^2 - \lambda^2}  \right)  - \log |K_0|,
\end{equation}
where $\lambda >0$, $s_0 \in \R$, $t_0 = \mathfrak{D_0} \lambda$. Moreover,

$$ \int_{\R^2} |K_0| e^{v_\lambda} = \beta , \ \ \oint_{\partial \R^2_+} h_0 e^{v_\lambda/2}= \beta + 2 \pi,$$

with 

\begin{equation}\label{eq:beta}
\beta=2\pi \left (\frac{h_0}{\sqrt{h_0^2+K_0}} - 1 \right ).
\end{equation}

\end{enumerate}

\end{theorem}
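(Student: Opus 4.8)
The plan is to normalize the equation to unit negative curvature and then read it as a geometric problem about hyperbolic metrics, for which a Liouville-type representation via developing maps is available. First I would set $\tilde v = v + \log|K_0|$, which turns \eqref{equaplane0} into $-\Delta \tilde v = -2 e^{\tilde v}$ in $\R^2_+$ together with $\partial_n \tilde v = 2\mathfrak{D}_0 e^{\tilde v/2}$ on $\partial \R^2_+$. The conformal metric $e^{\tilde v}|dz|^2$ then has constant Gaussian curvature $-1$, while the boundary condition says exactly that the straight line $\partial \R^2_+$ has constant geodesic curvature $\mathfrak{D}_0$ in this metric. Thus solutions are in one-to-one correspondence with hyperbolic metrics on the closed half-plane whose boundary is a curve of constant geodesic curvature $\mathfrak{D}_0$.

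Since $\R^2_+$ is simply connected, such a hyperbolic metric admits a globally defined developing map, i.e. a locally univalent holomorphic $g : \R^2_+ \to \mathbb{H}^2$ (with values in the unit disk) which is a local isometry onto the Poincar\'e disk; pulling back the Poincar\'e metric gives precisely $\tilde v = 2\log\big(2|g'|/(1-|g|^2)\big)$, which is the asserted formula after subtracting $\log|K_0|$. The constancy of the geodesic curvature of $\partial \R^2_+$ forces $g$ to map the real axis into a curve of constant geodesic curvature $\mathfrak{D}_0$ in $\mathbb{H}^2$. I would now invoke the hyperbolic trichotomy: such curves are geodesic circles when $\mathfrak{D}_0 > 1$, horocycles when $\mathfrak{D}_0 = 1$, and equidistant (hypercycle) curves when $\mathfrak{D}_0 < 1$. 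Extending $g$ by Schwarz reflection across $\partial \R^2_+$ and tracking the monodromy is what distinguishes the three cases: in the equidistant case $\mathfrak{D}_0<1$ the reflected surface cannot close up into a hyperbolic metric on the whole plane with $|g|<1$, giving the nonexistence in (1); in the horocyclic case $\mathfrak{D}_0=1$ the rigidity of maps into a horodisk with boundary on the horocycle pins down $g$ up to the residual symmetries, producing exactly the one-parameter family \eqref{profile2}; in the circle case $\mathfrak{D}_0>1$ one obtains $g:\R^2_+\to D$, with $D$ the geodesic disk of geodesic curvature $\mathfrak{D}_0$, a proper holomorphic map onto $D$. This geometric classification is the content of \cite{mira-galvez, zhang}, and I would quote it rather than redo it; the main conceptual obstacle is precisely this global control of the developing map and the monodromy after reflection.

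For the dichotomy in (3) I would argue that $g$ is a M\"obius transformation precisely when it is injective, i.e. of degree one onto $D$. A non-injective $g$ (as in the explicit $z^\gamma$ family of Subsection 2.2) realizes the hyperbolic area of $D$ with multiplicity, so $\int_{\R^2_+}|K_0|e^{v}$, which equals the total area of the image of $g$ counted with multiplicity, is finite if and only if the map is a single sheet; this gives the equivalence with \eqref{finitemass}. In the M\"obius case I would write $g$ explicitly as the M\"obius map sending $\R^2_+$ onto the round geodesic disk $D$ and substitute into the representation formula; the roundness of $D$ is what produces the rational bubble \eqref{bubble}, with $\lambda, s_0, t_0$ encoding center and radius and with $t_0=\mathfrak{D}_0\lambda$ reflecting the prescribed boundary curvature.

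Finally, the mass identities follow from Gauss--Bonnet on $D=g(\R^2_+)$ rather than from direct integration. Since $g$ is a conformal isometry in the finite-mass case, $\int_{\R^2}|K_0|e^{v_\lambda}$ equals the hyperbolic area $A$ of $D$ and $\oint_{\partial\R^2_+}h_0 e^{v_\lambda/2}$ equals $\mathfrak{D}_0$ times the hyperbolic length $L$ of $\partial D$; Gauss--Bonnet on the topological disk $D$ reads $-A+\mathfrak{D}_0 L=2\pi\chi(D)=2\pi$, which is exactly $\oint_{\partial\R^2_+}h_0 e^{v_\lambda/2}=\beta+2\pi$ with $\beta=A=\int_{\R^2}|K_0|e^{v_\lambda}$. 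To obtain the closed form \eqref{eq:beta} I would use the standard hyperbolic-disk relations $A=2\pi(\cosh r-1)$, $L=2\pi\sinh r$ and $\mathfrak{D}_0=\coth r$ for the geodesic radius $r$; eliminating $r$ gives $A=2\pi\big(\mathfrak{D}_0/\sqrt{\mathfrak{D}_0^2-1}-1\big)$, and substituting $\mathfrak{D}_0=h_0/\sqrt{|K_0|}$ with $|K_0|=-K_0$ yields $\beta=2\pi\big(h_0/\sqrt{h_0^2+K_0}-1\big)$. The only genuinely hard part is the geometric classification of the developing map in the three curvature regimes; once that is granted, the representation formula and the Gauss--Bonnet bookkeeping are routine.
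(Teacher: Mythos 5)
The paper offers no proof of this theorem: it is stated as a quoted classification result from \cite{mira-galvez, zhang}, and your sketch reconstructs exactly the argument underlying those references (normalization to curvature $-1$, the Liouville/developing-map representation, and the trichotomy of constant geodesic curvature curves in $\mathbb{H}^2$) while, like the paper, deferring the genuinely hard monodromy/classification step to the cited works. Your Gauss--Bonnet derivation of the mass identities and of \eqref{eq:beta} via $A=2\pi(\cosh r-1)$, $L=2\pi\sinh r$, $\mathfrak{D}_0=\coth r$ is correct and consistent with the stated formulas.
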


\bigskip We will next compute the Morse index of the above solutions. By the change of variable $v= v+ \log (|K_0|)$, we can assume $K_0=-1$ and pass to the equivalent problem:

\begin{equation}\label{equaplane}
\left\{\begin{array}{ll}
\displaystyle{-\Delta v = -2 e^v},  \qquad & \text{in $\mathbb{R}^2_+$,}\\
\displaystyle{\frac{\partial v}{\partial n} = 2 \mathfrak{D_0} e^{v/2}}, \qquad  &\text{on
	$\partial \mathbb{R}^2_+$.}
\end{array}\right.
\end{equation}
We will be concerned with the study of the quadratic form:

\begin{equation} \label{quadratic0}
Q(\psi)=  \int_{\mathbb{R}^2_+} |\nabla \psi|^2 dV_g + 2 \int_{\mathbb{R}^2_+} e^{v} \psi^2 \, dV_g  - \mathfrak{D_0} \oint_{\partial \mathbb{R}^2_+} e^{v/2} \psi^2 \, dy_g,
\end{equation}
where $\psi\in C^\infty_0(\mathbb{R}^2_+)$, the set of test functions with compact support (not necessarily zero on the boundary). We define the Morse index of a solution $v$ of \eqref{equaplane} as:

$$ind(v)= \sup \{ dim(E): \ E \subset \ C_0^{\infty}(\R^2_+) \ \mbox{ vector space, } Q(\psi) < 0 \ \forall \psi \in E \}.$$

We understand that $ind(v) = +\infty$ if the above set is not bounded from above.

\begin{theorem}\label{t:app-index}   Let $v$ be a solution of  problem \eqref{equaplane}. Then:

\begin{enumerate}

\item[a)] If $\mathfrak{D_0} =1$, then $ind(v)=0$, namely $v$ is stable.

\item[b)] If $\mathfrak{D_0} >1$, then:

If \eqref{finitemass} is satisfied, then $ind(v)=1$. Otherwise, $ind(v)= +\infty$.

\end{enumerate}

\end{theorem}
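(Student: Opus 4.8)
The plan is to exploit the fact that the quadratic form $Q$ in \eqref{quadratic0} is \emph{geometric}, and hence conformally invariant: in dimension two the Dirichlet integral does not depend on the conformal factor, while $\int e^v\psi^2$ and $\oint e^{v/2}\psi^2$ are nothing but the area- and length-integrals of $\psi^2$ with respect to the metric $g_v=e^v|dz|^2$. By the classification theorem, $g_v$ is exactly the pull-back $g^*g_{\mathbb{H}^2}$ of the hyperbolic metric under the map $g$. Whenever $g$ is univalent I would therefore transplant the whole problem to the model geodesic disk $D\subset\mathbb{H}^2$ of geodesic curvature $\mathfrak{D_0}$, on which
\[
Q_D(\psi)=\int_D\big(|\nabla\psi|^2+2\psi^2\big)\,dA_{\mathbb{H}^2}-\mathfrak{D_0}\oint_{\partial D}\psi^2\,d\ell,
\]
the interior operator being $-\Delta_{\mathbb{H}^2}+2$ with Robin condition $\partial_n\psi=\mathfrak{D_0}\psi$. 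Since a Möbius $g$ sends $\overline{\mathbb{R}^2_+}\cup\{\infty\}$ onto $\overline D$, compactly supported test fields on $\mathbb{R}^2_+$ correspond to functions vanishing near the single boundary point $P=g(\infty)$; as $P$ has zero $H^1$-capacity, cutting off near $P$ changes $Q$ by an arbitrarily small amount, so the Morse index computed over $C_0^\infty(\mathbb{R}^2_+)$ equals the index of $Q_D$ over $H^1(D)$.

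For part \emph{a)} ($\mathfrak{D_0}=1$) I would avoid the disk and argue directly with a positive Jacobi field. Differentiating the explicit family \eqref{profile2} in $\lambda$ yields $\Phi=\partial_\lambda v_\lambda=\tfrac{2}{\lambda(1+\lambda t)}>0$, which solves the linearized problem $-\Delta\Phi+2e^v\Phi=0$ in $\mathbb{R}^2_+$ with $\partial_n\Phi=e^{v/2}\Phi$ on the boundary (a direct check). Then for $\psi\in C_0^\infty(\mathbb{R}^2_+)$ the ground-state substitution $\psi=\Phi w$, together with one integration by parts in which the interior and boundary terms are absorbed exactly by the two equations satisfied by $\Phi$, gives $Q(\psi)=\int_{\mathbb{R}^2_+}\Phi^2|\nabla w|^2\ge 0$. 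Hence $v$ is stable and $ind(v)=0$.

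For the finite-mass case of part \emph{b)} I would work on $D$, which is a geodesic disk of radius $R$ with $\coth R=\mathfrak{D_0}>1$, and separate variables in geodesic polar coordinates, where $-\Delta_{\mathbb{H}^2}=-\tfrac1{\sinh r}\partial_r(\sinh r\,\partial_r)-\tfrac1{\sinh^2 r}\partial_\theta^2$. In the radial ($k=0$) sector one has $(-\Delta_{\mathbb{H}^2}+2)\cosh r=0$, and integrating by parts gives $Q_D(\cosh r)=\oint_{\partial D}\cosh R\,(\sinh R-\coth R\cosh R)=-2\pi\cosh R<0$, so $ind(v)\ge 1$. To show there is exactly one negative direction I would use Sturm theory in the Robin parameter: the eigenvalues $\mu_n(\beta)$ of the radial operator with condition $\partial_r u=\beta u$ are non-increasing in $\beta$; the unique solution of $(-\Delta_{\mathbb{H}^2}+2)u=0$ that is regular at $r=0$ is $\cosh r$, which is nodeless, so the value $\mu=0$ can only be a \emph{ground} state, forcing $\mu_1(\beta)>0$ for every $\beta$. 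At $\beta=\mathfrak{D_0}=\coth R$ this yields $\mu_0<0<\mu_1$, i.e. one negative radial eigenvalue. In the $k=1$ sector the functions $\sinh r\cos\theta,\ \sinh r\sin\theta$ solve the linearized problem with the correct Robin condition (they are the Jacobi fields generated by moving the center of $D$), and since $\sinh r>0$ they are nodeless ground states, so $\mu_0^{(1)}=0$ and there is no negative eigenvalue; for $k\ge 2$ the strictly larger potential $k^2/\sinh^2 r$ makes the form positive definite. Summing the sectors gives $ind(v)=1$.

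The remaining case, infinite mass with $\mathfrak{D_0}>1$, is where I expect the real difficulty. Here $g$ is locally univalent but not Möbius, and $\int_{\mathbb{R}^2_+}e^v=\int_{\mathbb{R}^2_+}g^*dA_{\mathbb{H}^2}=+\infty$ while $D$ has finite area, so by the area formula $g$ covers $D$ infinitely many times for a.e.\ target point. The plan is to produce infinitely many pairwise disjoint open sets $U_j\subset\mathbb{R}^2_+$, each meeting $\partial\mathbb{R}^2_+$ and mapped by $g$ univalently onto a fixed large sub-disk $D'\Subset D$ containing the support of a cut-off of the negative test field $\cosh r$ from the previous step; pulling that field back by $g|_{U_j}^{-1}$ and using the conformal invariance of $Q$ gives functions $\psi_j$ with disjoint supports and $Q(\psi_j)<0$, whose span is an infinite-dimensional negative subspace, so $ind(v)=+\infty$. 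The main obstacle is exactly the construction of infinitely many disjoint univalent sheets over $D'$: local univalence only gives small sheets, and one must promote these to sheets covering a definite portion of $D$ by analytic continuation of $g^{-1}$, controlling that the continuation stays in $\mathbb{R}^2_+$ and remains univalent. I would handle this by relying on the explicit structure of these maps provided by the classification in \cite{mira-galvez} (the same mechanism visible in the winding example of Subsection~2.2, where the boundary of $\mathbb{R}^2_+$ wraps infinitely often around $\partial D$), which organizes the infinite cover into the required disjoint fundamental pieces.
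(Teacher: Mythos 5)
Parts \emph{a)} and the finite-mass half of \emph{b)} are correct and essentially coincide with the paper's argument. For \emph{a)} you and the paper both exhibit the positive Jacobi field $\partial_\lambda v_\lambda \propto (1+t)^{-1}$; the paper cites the standard ``positive solution of the linearization implies stability'' fact, while you spell out the ground-state substitution $\psi=\Phi w$ --- same content. For the finite-mass case, your geodesic-polar analysis on the disk is the paper's computation in disguise: your $\cosh r$ is exactly the paper's radial eigenfunction $\gamma(x)=\frac{1+|x|^2}{1-|x|^2}$, your mode-$1$ Jacobi fields $\sinh r\,\cos\theta,\ \sinh r\,\sin\theta$ are the paper's kernel elements $x_i/(1-|x|^2)$, and the transfer of the index between $\mathbb{R}^2_+$ and the disk via conformal invariance plus a cut-off (you use the zero capacity of the point $g(\infty)$, the paper uses integrability of $e^v$, $e^{v/2}$ and the boundedness of $\gamma\circ g$) is the same mechanism. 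Your Sturm-type argument that $\mu=0$ can only occur as a radial ground state is a slightly more careful version of the paper's ``Fourier decomposition'' remark.

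The infinite-mass case is where your proposal genuinely diverges from the paper, and it contains an unfilled gap that you yourself flag: the existence of infinitely many pairwise disjoint sets $U_j\subset\mathbb{R}^2_+$ mapped \emph{univalently} by $g$ onto a fixed large subdisk $D'\Subset D$. Local univalence only produces sheets of uncontrolled size; $g$ restricted to $g^{-1}(D')$ need not be proper, hence need not be a covering map of $D'$, so the analytic continuation of $g^{-1}$ along paths in $D'$ can leave $\mathbb{R}^2_+$ or fail to close up into a full univalent sheet. Deferring this to ``the explicit structure of the classification'' is not a proof, and the difficulty is real (also, infinite total area only gives that the multiplicity function is non-integrable, not that a.e.\ point is covered infinitely often). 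The paper avoids covering theory entirely: it takes $\psi=\gamma\circ g\geq 1$, which solves the linearized equation with $\mathfrak{D_0}$ replaced by $1/\mathfrak{D_0}$ in the Robin condition, multiplies that equation by $\phi^2\psi$ for a logarithmic annular cut-off $\phi$, and obtains
$$Q(\phi\psi)=\int_{\R^2_+}\psi^2|\nabla\phi|^2-\Bigl(\mathfrak{D_0}-\tfrac{1}{\mathfrak{D_0}}\Bigr)\oint_{\partial\R^2_+}e^{v/2}(\phi\psi)^2.$$
The first term is uniformly bounded while the second diverges because $\oint_{\partial\R^2_+}e^{v/2}=+\infty$ and $\psi\geq 1$, giving instability outside every compact set and hence infinite index. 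You should either adopt this test-function identity or actually construct the disjoint sheets; as written, the infinite-index conclusion is not established.
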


\begin{proof}

In case a) the solution $v$ is given by \eqref{profile2}. Let us consider the linearized problem:

$$
\left\{ \begin{array}{ll}
\displaystyle{-\Delta \psi + 2 \frac{1}{(1+t)^2} \psi= 0},  \qquad & \text{in } \R^2_+,\\
\displaystyle{\frac{\partial \psi}{\partial n}} =   \psi , \qquad  &\text{on }
\partial \R^2_+.

\end{array}\right.
$$

An explicit solution is $\psi(s,t)= \psi(t)= \frac{1}{t+1}$, which is a positive function. This implies stability (see for instance \cite{dupaigne}, Section 1.2) .

In case b), let $g$ be given by the above classification. By composing with a symmetry of $\mathbb{H}^2$ we can assume, without loss of generality, that $g(\R^2_+)$ is contained in the disk $D_R \subset \mathbb{H}^2$ centred at $0$. Here $R$ denotes its euclidean radius, that satisfies  $R= \mathfrak{D_0} - \sqrt{\mathfrak{D_0}^2-1}<1$. In such a case, the Gaussian and geodesic curvatures of $D_R$ in $\mathbb{H}^2$ translate to $\mathbb{R}_+^2$ via $g$. 

In what follows we shall write $\rho(s)= \frac{4}{(1-s^2)^2}$. In order to study stability  we pass to the disk $D_R$ and study the following quadratic form:

$$
Q_R(\psi)= \int_{D_R} |\nabla \psi|^2 dz + 2 \int_{D_R} \psi^2 \, \rho(|z|) dz - \mathfrak{D_0}  \oint_{\partial D_R} \psi^2 \sqrt{\rho(R)}  dz,
$$
for $\psi \in C^{\infty}(\overline{D_R})$. We claim that $Q_R$ has Morse index 1. Its associated linear operator can be written as

\begin{equation}\label{equadisc1}
\left\{\begin{array}{ll}
\displaystyle{-\Delta \psi + 2 \rho(|z|) \psi= 0},  \qquad & \text{on $D_R$,}\\
\displaystyle{\frac{\partial \psi}{\partial n}} = \mathfrak{D_0}  \sqrt{\rho(R)} \gamma , \qquad  &\text{on }
\partial D_R.
\end{array}\right.
\end{equation}

One can easily check that the functions:

$$ \frac{x_1}{1-|x|^2},\ \qquad \quad   \frac{x_2}{1-|x|^2},$$ 
satisfy \eqref{equadisc1}. These elements in the kernel are of course related to the invariances of our problem. It is easy to observe, by using Fourier decomposition, that those functions correspond to the second mode expansion. As a consequence, there is a unique negative eigenvalue with radially symmetric eigenfunction. This eigenfunction is indeed explicit:

$$ \gamma(x)= \frac{1+|x|^2}{1-|x|^2}.$$ 

Observe that this function is bounded in $D_R$, for $R \in (0,1)$. Moreover, $\gamma$ solves the  boundary value problem: 

$$
\left\{\begin{array}{ll}
\displaystyle{-\Delta \psi + 2 \rho(|z|) \psi= 0},  \qquad & \text{on $D_R$,}\\
\displaystyle{\frac{\partial \psi}{\partial n}} = \frac{1}{\mathfrak{D_0}}  \sqrt{\rho(R)} \gamma , \qquad  &\text{on }
\partial D_R.
\end{array}\right.
$$

Observe that this equation is very similar to \eqref{equadisc1} , but with $\mathfrak{D_0}$ replaced by $\frac{1}{\mathfrak{D_0}}$. Since $\mathfrak{D_0}>1$, we have that $Q_R(\gamma)<0$. This finishes the proof of the claim.

\bigskip Define now $\psi= \gamma \circ g$. Clearly, $\psi$ solves:

\begin{equation}\label{linear plane}
\left\{ \begin{array}{ll}
\displaystyle{-\Delta \psi + 2 e^v \psi= 0},  \qquad & \text{in } \R^2_+,\\
\displaystyle{\frac{\partial \psi}{\partial n}} = \frac{1}{\mathfrak{D_0}} e^{v/2}  \psi , \qquad  &\text{on }
\partial \R^2_+.

\end{array}\right.
\end{equation}

Let us first consider the case of finite mass, that is, assume that \eqref{finitemass} holds. Using the invertibility of the M\"obius map $g$, we can relate the second variation 
in $D_R$ to that in $\R^2_+$, which implies that $Q(\psi) < 0$. 

We observe now that, by definition, $\psi$ is uniformly bounded in $\R^2_+$; since $e^v \in L^1(\R^2_+)$ and $e^{v/2} \in L^1 (\partial \R^2_+)$ and since 
$\psi$ has a limit at infinity, we can find a compactly-supported function $\tilde{\psi}$ 
such that, still $Q(\tilde{\psi})<0$. As a consequence, $ind(v) \geq 1$. 

If by contradiction we had the strict inequality, we would 
be able to find a two-dimensional space $E_2$ spanned by two (smooth) compactly-supported functions 
such that the above quadratic form would be negative-definite on $E_2$. Considering then 
the two-dimensional space $\hat{E}_2$ of functions on the disk $D_R$ defined by 
$$
  \hat{E}_2 = \left\{ \phi \circ g^{-1} \; : \; \phi \in E_2  \right\},
$$
we would have that $Q_R(\hat{\phi}) < 0$ for all $\hat{\phi} \in \hat{E}_2$. This would 
contradict the fact that the index of $Q_R$ is precisely $1$. Therefore, we proved that 
if \eqref{finitemass} holds then $ind(v) = 1$. 

\medskip

Assume now that $\oint_{\partial \R^2_+} e^{v/2} = +\infty$: we will show that the solution $v$ is not stable outside any compact set. This implies in particular that the Morse index is infinite.  

We multiply \eqref{linear plane} by $\phi^2 \psi$ and integrate, where $\phi$ is a conveniently chosen cut-off function:

$$ 0 = \int_{\R^2_+} (-\Delta \psi + 2 e^v \psi) \phi^2 \psi = \int_{\R^2_+}  \nabla \psi \cdot \nabla (\phi^2 \psi) + 2 e^v \psi^2 \phi^2  - \oint_{\partial \R^2_+} \frac{1}{\mathfrak{D_0}} e^{v/2} \psi^2 \phi^2.$$

Taking into account that $|\nabla (\phi \psi)|^2= \nabla \psi \cdot \nabla (\phi^2\psi) + \psi^2 |\nabla \phi|^2$, we have:

\begin{equation} \label{crucial} \int_{\R^2_+} \psi^2 |\nabla \phi|^2 = \int_{\R^2_+}  |\nabla (\phi \psi)|^2 + 2 e^v (\psi \phi)^2  - \oint_{\partial \R^2_+} \frac{1}{\mathfrak{D_0}}  e^{v/2} (\psi \phi)^2.\end{equation}

Observe that the  above right-hand side is similar to the expression of $Q(\phi \psi)$ given in \eqref{quadratic0}, but with $\frac{1}{\mathfrak{D_0}} $ instead of $\mathfrak{D_0}$.

Indeed, given a fixed $M_0>0$ and $M$ large enough, take $\phi= \phi_M$ a non-negative cut-off such function such that

$$ \phi=0 \mbox{ in }B_0(M_0), \phi = 1 \mbox{ in } A(0; 2 M_0, M), \ \phi =0 \mbox{ in } B_0(2M)^c,  $$$$ \int_{\R^2} |\nabla \phi_M|^2 \leq C, \quad  \mbox{ independently of  }M.$$

Recall now \eqref{crucial}, and let us estimate:

$$ \int_{\R^2_+} \psi^2 |\nabla \phi|^2 \leq C, $$

However $\psi \geq 1$, and so 

 $$ \oint_{\partial \R^2_+}  e^{v/2} (\psi \phi)^2 \geq  \oint_{\partial \R^2_+ \cap A(0; 2M_0, M)}  e^{v/2} .$$

Since $\oint_{\partial \R^2_+}  e^{v/2} = +\infty$ and $M_0$ is fixed, the above term diverges as $M \to +\infty$. Hence we can choose $M$ so that $Q(\phi \psi)<0$. Since $M_0$ is arbitrary we obtain instability outside any compact set, as claimed.
\end{proof}

\

\section{Blow-up analysis. General properties} \label{s:bu-gen}
\setcounter{equation}{0}

The goal of the rest of the paper is to prove 
Theorem \ref{teo4}. In this section we focus on some general 
properties of blowing-up sequences of solutions to \eqref{ecua-compact}.
In particular we will derive the proof of Theorem \ref{teo4}, {\em (1)}.

\

\begin{proposition}\label{blowanal} The singular set $S$ defined in \eqref{singularset} satisfies  $$S \subset \left\{ p \in \partial \S \,\, : \, \mathfrak{D}(p)\geq 1 \right\}.$$
\end{proposition}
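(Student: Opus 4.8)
The plan is to argue by contradiction through a rescaling (blow-up) analysis, reducing near any $p\in S$ to the limit problem \eqref{equaplane0} and invoking its classification from Section \ref{app-A}. The starting observation is that, since $K<0$, the interior equation in \eqref{ecua-compact} gives $2K_n(x)e^{u_n}\le 0$, so that $-\Delta u_n = 2K_n e^{u_n} - 2\tilde K_n \le C$; equivalently $\Delta u_n \ge -C$ locally, i.e. in a coordinate chart $u_n$ is subharmonic up to a bounded smooth correction. Fix $p\in S$ and a sequence $y_n\to p$ with $u_n(y_n)\to+\infty$. From it I would extract a blow-up center $x_n\to p$ together with the scale $\mu_n = e^{-u_n(x_n)/2}\to 0$, and set
$$ v_n(y) = u_n(x_n+\mu_n y) - u_n(x_n). $$
Working in conformal (isothermal) coordinates flattening $\partial\Sigma$ near $p$, the equation keeps the same form with bounded modified coefficients, and the scaling turns the boundary condition into $\partial_\nu v_n = 2 h_n(x_n+\mu_n\cdot)e^{v_n/2} - 2\tilde h_n\,\mu_n$, while $\tilde K_n$ is multiplied by $\mu_n^2\to 0$. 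Since $\mathfrak{D}$ defined in \eqref{d} is scale invariant, the relevant limiting parameter is exactly $\mathfrak{D}(p)$.

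The delicate point, and the main obstacle, is the selection of $x_n$: because $p$ need not be isolated in $S$ and $u_n$ need not possess local maxima accumulating at $p$, I cannot simply take $x_n$ to be a maximum point. Instead I would choose $x_n$ as an \emph{approximate} maximizer of a suitable concentration quantity — this is precisely where Ekeland's variational principle (in a finite-dimensional fashion) enters — so as to guarantee $v_n\le o(1)$ on compact sets together with $v_n(0)=0$. With such a choice the rescaled right-hand sides are locally bounded (as $e^{v_n}\le C$ and $\mu_n\to 0$), so elliptic estimates yield $v_n\to v$ in $C^1_{loc}$, with $v$ a solution of a limit problem satisfying $v(0)=0$ and $v\le 0$.

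Finally I would analyze the limit according to the position of $x_n$ relative to $\partial\Sigma$, measured by $d_n/\mu_n$ with $d_n=\dist(x_n,\partial\Sigma)$. If $d_n/\mu_n\to+\infty$ the rescaled domains exhaust $\R^2$ and $v$ solves $-\Delta v = 2K(p)e^v$ in $\R^2$, i.e. $\Delta v = 2|K(p)|e^v>0$; since $v\le v(0)=0$, the origin is a global maximum, forcing $\Delta v(0)\le 0$, a contradiction. This excludes interior blow-up and, when $p\in\partial\Sigma$, the possibility $d_n/\mu_n\to+\infty$, so that $S\subset\partial\Sigma$. If instead $d_n/\mu_n$ stays bounded, the limit domain is (after translation) a half-plane and $v$ is a nontrivial solution of \eqref{equaplane0} with $K_0=K(p)$, $h_0=h(p)$, hence with parameter $\mathfrak{D}_0=\mathfrak{D}(p)$; by the classification recalled in Section \ref{app-A}, such a solution exists only if $\mathfrak{D}(p)\ge 1$. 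Combining the two cases gives $S\subset\{p\in\partial\Sigma:\ \mathfrak{D}(p)\ge 1\}$, as claimed.
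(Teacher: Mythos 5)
Your proposal is correct and follows essentially the same route as the paper: Ekeland's variational principle applied to $e^{-u_n/2}$ to select blow-up centers $x_n$ enjoying the local upper bound $v_n\le o(1)$, a dichotomy on $d(x_n,\partial\Sigma)/\mu_n$, Harnack-type estimates to pass to a limit profile, and the classification of half-plane solutions from Section~4 to force $\mathfrak{D}(p)\ge 1$. The only (harmless) deviation is in excluding the whole-plane limit: you use the maximum principle at the origin (where $v\le 0=v(0)$ forces $\Delta v(0)\le 0$, contradicting $\Delta v=2|K(p)|e^{v}>0$), whereas the paper invokes the classical nonexistence of entire solutions of $-\Delta v=2K(p)e^{v}$ with $K(p)<0$; both arguments are valid.
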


The spirit of the proof is simple: if $p \in S$, then one can rescale around $p$ to obtain a solution on the half-plane, and this is possible only if $\mathfrak{D}(p)\geq 1$, as recalled in the previous section. The non-trivial point here is to be able to rescale and pass to a limit problem even if $p$ is not isolated in $S$. We show that this is possible by choosing carefully a sequence $x_n \in \S$ (not necessarily local maxima) such that  $x_n \to p$ and $u_n(x_n) \to +\infty$. Let us point out that this is a technical novelty even for the classical problem considered in \cite{breme, li-sha}: one can pass to the limit around a singular point \emph{without knowing if the singular set is finite}. 

For this purpose we shall use  Ekeland's variational principle, which we recall below:

\begin{theorem}[see Chapter 5 in \cite{str-book}]\label{Ekel}
Let $(X,d)$ be a complete metric space and consider a function $\varphi:X\to(-\infty,+\infty]$ that is lower semi-continuous, bounded from below and not identical to $+\infty$. Let $\varepsilon>0$ and $\lambda>0$ be given and let $x\in X$ be such that $\varphi(x)\leq\inf_{X}\varphi+\varepsilon$. Then there exists $x_{\varepsilon}\in X$ such that
\begin{enumerate}
\item $\displaystyle{\varphi(x_{\varepsilon})\leq \varphi(x)}$,
\item $\displaystyle{d(x_{\varepsilon},x)\leq \lambda}$,
\item $\displaystyle{\varphi(x_{\varepsilon})<\varphi(z)+\varepsilon \frac{1}{\lambda} d(x_{\varepsilon},z)}$ for every $z\neq x_{\varepsilon}$. 
\end{enumerate}
\end{theorem}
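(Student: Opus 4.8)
The plan is to deduce the statement from a minimality argument for a partial order on $X$ that directly encodes conclusion (3). Set $\delta = \varepsilon/\lambda$ and define, for $y,z \in X$, the relation $y \preceq z$ if and only if $\varphi(y) + \delta\, d(y,z) \le \varphi(z)$. First I would check that $\preceq$ is indeed a partial order: reflexivity is immediate; antisymmetry follows by adding the two defining inequalities for $y\preceq z$ and $z\preceq y$, which forces $d(y,z)=0$; and transitivity uses the triangle inequality for $d$. Observe that $y \preceq z$ forces $\varphi(y) \le \varphi(z)$, so descending in this order never increases the functional. The point $x_\varepsilon$ will be produced as a $\preceq$-minimal element lying below the given $x$.

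Next I would construct $x_\varepsilon$ iteratively. Put $x_0 = x$ and, given $x_n$, consider the lower section $S_n = \{ z \in X : z \preceq x_n\}$. Each $S_n$ is nonempty (it contains $x_n$) and closed, since $z \mapsto \varphi(z) + \delta\, d(z,x_n)$ is lower semicontinuous, being the sum of the l.s.c. $\varphi$ and the continuous $d(\cdot,x_n)$, and $S_n$ is one of its sublevel sets. Choosing $x_{n+1} \in S_n$ with $\varphi(x_{n+1}) \le \inf_{S_n}\varphi + 2^{-n}$ (possible because $\varphi$ is bounded below, so $\inf_{S_n}\varphi$ is finite) makes the sequence of sections nested, $S_{n+1}\subseteq S_n$, by transitivity. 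Moreover, for $z \in S_{n+1}$ one has $z \preceq x_{n+1}$, whence $\delta\, d(z,x_{n+1}) \le \varphi(x_{n+1}) - \varphi(z) \le \varphi(x_{n+1}) - \inf_{S_n}\varphi \le 2^{-n}$; thus $\mathrm{diam}(S_{n+1}) \to 0$. Completeness of $X$ then yields $\bigcap_n S_n = \{x_\varepsilon\}$ for a single point $x_\varepsilon$.

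Finally I would verify the three conclusions. Since $x_\varepsilon \in S_0$, we have $x_\varepsilon \preceq x$, giving at once $\varphi(x_\varepsilon) \le \varphi(x)$ (conclusion (1)); combining $\delta\, d(x_\varepsilon,x) \le \varphi(x)-\varphi(x_\varepsilon)$ with $\varphi(x) \le \inf_X \varphi + \varepsilon$ and $\varphi(x_\varepsilon)\ge \inf_X\varphi$ gives $d(x_\varepsilon,x) \le \varepsilon/\delta = \lambda$ (conclusion (2)). For conclusion (3), suppose some $z \neq x_\varepsilon$ satisfied $\varphi(z)+\delta\, d(z,x_\varepsilon) \le \varphi(x_\varepsilon)$, i.e. $z \preceq x_\varepsilon$. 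Because $x_\varepsilon \in S_n$, i.e. $x_\varepsilon \preceq x_n$, for every $n$, transitivity would give $z \preceq x_n$, so $z \in \bigcap_n S_n = \{x_\varepsilon\}$, contradicting $z \ne x_\varepsilon$. Hence $\varphi(x_\varepsilon) < \varphi(z) + (\varepsilon/\lambda)\, d(x_\varepsilon,z)$ for every $z \ne x_\varepsilon$.

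The partial-order axioms and the arithmetic in conclusions (1)--(2) are routine; the heart of the argument is the construction in the second step, where lower semicontinuity guarantees closedness of the sections $S_n$ and the greedy choice of approximate minimizers forces the diameters to shrink, so that completeness produces a single limit point. The one point requiring genuine care is establishing that this point is $\preceq$-minimal, i.e. that nothing lies strictly below it, since this is exactly what yields the strict inequality in conclusion (3); that minimality is secured precisely by the fact that $x_\varepsilon$ belongs to every section $S_n$.
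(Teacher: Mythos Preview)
Your proof is correct and follows the standard partial-order argument for Ekeland's variational principle. Note, however, that the paper does not actually prove this theorem: it is stated as a known tool with a reference to Struwe's book, so there is no proof in the paper to compare against. Your argument is precisely the classical one that appears in most textbook treatments, and every step is sound.
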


\medskip

\begin{proof}

Let $p$ be a point in $S$. By conformal invariance, we can pass from a neighborhood of $p$ to a domain $B \subset \R^2$, where $B$ denotes $B_0(r) \subset \R^2$, if $p \in int(\S)$, or $B_0^+(r)$, if $p \in \partial \S$. We can choose this conformal map so that $p$ is mapped to $0$.

Let us take a sequence $y_n$  in $B$ such that $y_n\to 0$ and $u_n(y_n)\to+\infty$, and define
$$
\varepsilon_n=e^{-\frac{u_n(y_n)}{2}}.
$$

We apply Theorem~\ref{Ekel} taking $\displaystyle{\varphi=e^{-\frac{u_n}{2}}}$ and $\lambda=\sqrt{\varepsilon_n}$: then there exists a sequence $x_n \in B$ such that

\begin{enumerate}
\item[a)] $u_n(y_n)\leq u_n(x_n)$,
\item[b)] $\displaystyle{d(x_n,y_n)\leq \sqrt{\varepsilon_n}}$,
\item[c)] $\displaystyle{e^{-\frac{u_n(x_n)}{2}}<e^{-\frac{u_n(z)}{2}}+ \sqrt{\varepsilon_n} d(x_n,z)}$ for every $z\neq x_n$. 
\end{enumerate}

As a consequence of a) and b) above, $x_n\to 0$ and $u_n(x_n)\to+\infty$. The idea is that the new sequence $x_n$ is convenient for rescaling and passing to a limit problem.

Now, set:
\beq\label{deltaeps}
\delta_n=e^{\frac{-u_n(x_n)}{2}}\to 0, \  B_n= B_{x_n}( r/2) \cap B,
\eeq
and
%


\beq\label{scalesolution}
v_n(x)=u_n(\delta_n x+x_n)+2\log \delta_n, 
\eeq
which is defined in $\tilde{B}_n=  \frac{1}{\delta_n} B_n$.
Clearly, $v_n(0)=0$. We claim that, given any $R>0$, $\e>0$, 

\beq \label{bound9} v_n(x) \leq \e \ \ \forall \, x  \in \tilde{B}_n,\ |x| < R, \eeq
for sufficiently large $n$. Indeed, we use c) in the choice of the sequence $x_n$ and recall the definition of $\delta_n$; we conclude that if $|z-x_n| < R \delta_n$, then 

$$  (1- \sqrt{\e_n} R )\displaystyle{e^{-\frac{u_n(x_n)}{2}} < e^{-\frac{u_n(z)}{2}}}.$$
	
From this we get:

$$ u_n(z) < u_n(x_n) - 2 \log(1- \sqrt{\e_n} R), \ \mbox{ if } |z-x_n|< R \delta_n.$$

And this implies \eqref{bound9}.

\

In what follows we distinguish two cases:

\

\textit{\underline{Case 1}} $p \in \partial \S$ and, up to a subsequence: 
$$
d(x_n, \Gamma_0^+(r))=O(\delta_n) \quad \mbox{ as } n\to+\infty.
$$


Passing to a subsequence we can assume that $\frac{d(x_n, \Gamma_0^+(r))}{\delta_n } \to t_0 \geq 0$. Then, the function $v_n$ solves 
\begin{equation}\label{ecuablow}
\left\{\begin{array}{ll}
\displaystyle{ -\Delta v_n + 2 \delta_n^2 \tilde{K}_n(\delta_n x+x_n) = 2 K_n(\delta_n x+x_n) e^{v_n}},  \qquad & \text{in } \tilde{B}_n,\\
\displaystyle{\frac{\partial v_n}{\partial n} + 2 \delta_n \tilde{h}_n(\delta_n x + x_n) = 2h_n(\delta_n x+x_n)e^{v_n/2}}, \qquad  &\text{on } \tilde{\Gamma}_n,
\end{array}\right.
\end{equation}
where $\tilde{\Gamma}_n$ is the straight portion of $\partial \tilde{B}_n$.

Taking \eqref{bound9} into account, by Harnack type inequalities (see Lemma A.2 in \cite{JostWZZ}), $v_n$ is uniformly bounded in $L^{\infty}_{loc}(\R \times (-t_0,+\infty))$. Therefore, up to subsequence,
\beq\label{C2conver}
v_n\to v \quad \mbox{ in } C^2_{loc}(\R \times (-t_0,+\infty)), 
\eeq
which is a solution of the equation

\begin{equation}\label{ecualimit1}
\left\{\begin{array}{ll}
\displaystyle{-\Delta v = 2 K(0) e^v},  \qquad & \text{in } \R \times (-t_0,+\infty),\\
\displaystyle{\frac{\partial v}{\partial n} = 2h(0)e^{v/2}}, \qquad  &\text{if }t=-t_0.
\end{array}\right.
\end{equation}

If $K(0)<0$ and $h(0)>0$, the latter problem  admits solutions only if $\mathfrak{D}(0) \geq 1$, see Section 4. 

\
\

\textit{\underline{Case 2}}
$$
\frac{d(x_n,\Gamma_0^+(r))}{\delta_n} \to+\infty, \quad \mbox{ as } n\to+\infty.
$$

In this situation the rescaled domains $\tilde{B}_n$ invade all of $\R^2$. Hence, 
reasoning as before, up to subsequence we have 
$$
v_n\to v \quad \mbox{ uniformly in } C^2_{loc}(\mathbb{R}^2), 
$$
which is a solution of the equation
\begin{equation}\label{ecualimit2}
\displaystyle{-\Delta v = 2 K(0) e^{v}},  \qquad \text{in $\mathbb{R}^2$.}
\end{equation}

If $K(0)<0$, is it well known, via Liouville's formula, that problem \eqref{ecualimit2} does not admit any solution.

\end{proof}

\begin{remark} \label{bordo}

In the above proof, in Case 1, we arrive to a solution of \eqref{ecualimit1}. Moreover $v(0)=0$ and, by \eqref{bound9}, $v(x) \leq 0 $ for all $x \in \R \times [-t_0,+\infty)]$. By the maximum principle this implies that actually $t_0=0$. That is, one obtains the more 
precise conclusion $d(x_n, \Gamma_0^+(r)) = o (\delta_n)$.

\end{remark}

We now state and prove a couple of lemmas for later use. 

\begin{lemma} \label{u^-}There holds:

\begin{enumerate}

\item[a)] For any $q \in (1,2)$, $\int_\S |\nabla u_n^-|^q < O(1)$. 

\item[b)] If, moreover, $\chi_n \leq 0$, then $\int_\S |\nabla u_n^-|^2 < O(1)$.

\item[c)] For any $K \subset \S$ compact set with $K \cap S = \emptyset$, we have that:

$$ \sup_{K} u_n - \inf_K u_n = O(1) , \ \ \int_K |\nabla u_n|^2 = O(1).$$

\end{enumerate}

\end{lemma}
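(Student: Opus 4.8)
The three statements are local-elliptic in nature and all rest on one structural fact: since $K<0$, the densities $e^{u_n}|u_n^-|$ on $\S$ and $e^{u_n/2}|u_n^-|$ on $\partial\S$ are \emph{pointwise bounded}, because $s\mapsto|s|e^{s}$ and $s\mapsto|s|e^{s/2}$ are bounded on $(-\infty,0]$. Hence, on the set where $u_n<0$, the nonlinear terms $2K_ne^{u_n}$ and $2h_ne^{u_n/2}$ contribute only bounded densities, and the only genuinely dangerous contributions come from the linear terms carrying $\tilde K_n$ and $\tilde h_n$. For \emph{a)} the plan is the Boccardo--Gallou\"et truncation argument. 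Given a level $k>0$, I would test \eqref{ecua-compact} against $\varphi_k=\max\{u_n^-,-k\}\in H^1(\S)\cap L^\infty(\S)$, whose gradient is supported in $\{-k<u_n<0\}$, to get
$$\int_{\{-k<u_n<0\}}|\n u_n|^2 = \oint_{\partial\S}(2h_ne^{u_n/2}-2\tilde h_n)\varphi_k -2\int_\S\tilde K_n\varphi_k +2\int_\S K_n e^{u_n}\varphi_k.$$
By the boundedness above the two nonlinear integrals are $O(1)$ uniformly in $k$ and $n$, while $|\varphi_k|\le k$ forces the two linear integrals to be $O(k)$; thus $\int_{\{-k<u_n<0\}}|\n u_n|^2\le C(1+k)$ with $C$ independent of $n$. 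This is precisely the level-set energy bound from which the standard Boccardo--Gallou\"et interpolation, together with the $2$D Sobolev embedding to control $|\{u_n<-k\}|$, yields $\int_\S|\n u_n^-|^q=O(1)$ for every $q\in(1,2)$.

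For \emph{b)} the aim is to remove the factor $k$ and reach $q=2$, which is impossible from the level-set bound alone, so the new ingredient must be the sign hypothesis $\chi_n\le 0$. Keeping $\varphi_k$ but splitting $\varphi_k=(\varphi_k-\fint_\S\varphi_k)+\fint_\S\varphi_k$, I would rewrite the two linear terms as
$$-2\int_\S\tilde K_n\Big(\varphi_k-\fint_\S\varphi_k\Big)-2\oint_{\partial\S}\tilde h_n\Big(\varphi_k-\fint_\S\varphi_k\Big)-2\Big(\fint_\S\varphi_k\Big)\chi_n,$$
recalling $\chi_n=\int_\S\tilde K_n+\oint_{\partial\S}\tilde h_n$ from \eqref{chi_n}. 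Since $\varphi_k\le 0$ gives $\fint_\S\varphi_k\le 0$, the last term is $\le 0$ exactly when $\chi_n\le 0$, so it can be discarded. The first two are handled by Cauchy--Schwarz together with the Poincar\'e--Wirtinger and trace inequalities applied to the zero-average function $\varphi_k-\fint_\S\varphi_k$, bounding them by $C\|\n\varphi_k\|_{L^2(\S)}$. With the $O(1)$ nonlinear terms this gives $\|\n\varphi_k\|_2^2\le C\|\n\varphi_k\|_2+O(1)$ uniformly in $k,n$, hence $\|\n\varphi_k\|_2=O(1)$ by Young's inequality; letting $k\to\infty$ and using monotone convergence of $\n\varphi_k$ to $\n u_n^-$ yields \emph{b)}.

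For \emph{c)} I would argue by Harnack. Since $K\cap S=\emptyset$ and $S$ is as in \eqref{singularset}, each point of $K$ has a neighborhood on which $\sup_n u_n<+\infty$; covering $K$ by finitely many such sets produces a connected open $\Omega$ with $K\subset\subset\Omega$, $\overline\Omega\cap S=\emptyset$ and $u_n\le M=O(1)$ on $\Omega$. Consequently $v_n:=\sup_\Omega u_n-u_n\ge 0$ on $\Omega$, and because $u_n\le M$ the right-hand sides of \eqref{ecua-compact} are uniformly bounded in $L^\infty(\Omega)$, so $v_n$ solves an equation with bounded interior and Neumann data. The normalization by $\sup_\Omega u_n$ is the crucial point: it forces $\inf_\Omega v_n=0$ to be attained in $\Omega$, so that the (possibly boundary) Harnack inequality of Lemma A.2 in \cite{JostWZZ}, chained over a finite cover of $\Omega$ by balls, gives $\sup_K v_n=O(1)$ regardless of whether $u_n$ stays bounded or drifts to $-\infty$; this is exactly $\sup_K u_n-\inf_K u_n=O(1)$. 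Finally, on $\Omega$ one has $0\le v_n\le C$ with bounded data, so a Caccioppoli estimate with a cut-off supported in $\Omega$ and equal to $1$ on $K$ gives $\int_K|\n u_n|^2=\int_K|\n v_n|^2=O(1)$.

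The main obstacle throughout is the complete lack of a priori control on $\int_\S u_n^-$ and on the size of $u_n$ on $K$: the solutions may blow down on large sets, which is what rules out a naive choice of test function in \emph{a)}--\emph{b)} and a naive comparison argument in \emph{c)}. The three devices above — truncation for \emph{a)}, the cancellation forced by the sign $\chi_n\le 0$ for \emph{b)}, and the normalization by $\sup_\Omega u_n$ before invoking Harnack for \emph{c)} — are precisely what is needed to circumvent it.
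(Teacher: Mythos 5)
Your part \emph{b)} is correct and is essentially the paper's own argument: test the equation against $u_n^-$ (or a truncation of it), split off the mean, and use $\chi_n\le 0$ together with $\fint_\S u_n^-\le 0$ to discard the term $-2\chi_n\fint_\S u_n^-$; your variant, which absorbs the oscillation terms via Poincar\'e--Wirtinger and the trace inequality instead of quoting \emph{a)}, works and is even self-contained. Parts \emph{a)} and \emph{c)}, however, each have a genuine gap, and it is the same phenomenon both times: the solutions may drift to $-\infty$ on sets of fixed positive measure (this really occurs here; see the examples of Section 2.2, and Lemma \ref{key}, where $u_n\to-\infty$ on $\partial B_p(r)\cap\S$), and nothing in your argument excludes it.

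In \emph{a)}, the testing identity and the bound $\int_{\{-k<u_n<0\}}|\nabla u_n|^2\le C(1+k)$ are fine, but the concluding step is not. The Boccardo--Gallou\"et interpolation needs $|\{u_n<-k\}|$ to decay in $k$, uniformly in $n$; in the Dirichlet setting this comes from the Sobolev inequality applied to the truncation, which vanishes on the boundary. Here $\varphi_k=\max\{u_n^-,-k\}$ vanishes nowhere, there is no normalization of $u_n$, and $|\{u_n<-k\}|$ can stay bounded away from zero for all $k\le c_n$ with $c_n\to+\infty$, so the Sobolev embedding gives no control. The paper avoids this by truncating \emph{the equation} rather than the test function: it sets $w_n=w(u_n)$ with $w$ concave, $C^2$, equal to the identity on $(-\infty,0]$ and constant on $[1,+\infty)$, checks by integration that $-\Delta w_n$ is bounded in $L^1(\S)$ with $L^\infty$ Neumann data, and then invokes the linear $W^{1,q}$ estimate for $L^1$ data, which controls $\nabla w_n=\nabla u_n^-$ on $\{u_n<0\}$ and is insensitive to additive constants.

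In \emph{c)}, the Harnack chain applied to $v_n=\sup_\Omega u_n-u_n\ge 0$ gives $\sup_K v_n\le C\left(\inf_K v_n+1\right)$ with $C>1$, and this bounds the oscillation only if $\inf_K v_n=\sup_\Omega u_n-\sup_K u_n$ is itself bounded. It need not be: the maximum of $u_n$ over $\Omega$ can be attained arbitrarily close to $\partial\Omega$ (toward the singular set) while $u_n\to-\infty$ on $K$, and enlarging $\Omega$ only pushes the problem outward. This is precisely why the paper remarks that \emph{c)} cannot be obtained by the usual representation or comparison arguments and instead deduces it from \emph{a)}: since $u_n\le C$ near $K$, the $W^{1,q}$ bound of \emph{a)} applies to $u_n$ itself there, Poincar\'e--Sobolev bounds $u_n-\fint u_n$ in every $L^{q'}$, and the two-sided local $L^\infty$ estimates (Theorems 8.17 and 8.32 in \cite{gilbarg}, and their Neumann analogues) then give the oscillation and gradient bounds.
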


Estimates like \emph{c)} are commonly used in Liouville type problems, and  are usually proved by using a Green's representation argument. However, in our case this is not possible unless the total mass is finite. Here we give a different proof, which is based on estimates for $u_n^-$ and on local regularity arguments. 

The estimates of $u_n^-$ follow the idea of \cite{dmlsr}, where the Kato inequality is used. The problem is that here it is not so clear which boundary condition is satisfied by $u_n^-$, hence we use a smooth approximation of the function $u \mapsto u^-$.

\begin{proof} Define $w: \R \to \R$ a $C^2$ function such that:
$$ \left \{ \begin{array}{ll} w(u)=u & u \leq 0, \\ w(u)=1 & u \geq 1, \\ w'(u) \geq 0 & u \in \R, \\ w''(u) \leq 0 & u \in \R. \end{array} \right.$$

Then the function $w_n=w(u_n)$ satisfies

\begin{equation} \label{w(u)} \left \{ \begin{array}{ll}\dis  -\Delta w_n = -w''(u_n)|\nabla u_n|^2 - 2 w'(u_n) (\tilde{K}_n + |K_n| e^{u_n}), & x \in \Sigma, \\ \dis \  \frac{\partial w_n}{\partial \nu} + 2 w'(u_n) \tilde{h}_n = 2 w'(u_n) h_n e^{u_n/2}, & x \in \partial \S. \end{array} \right. \end{equation}

Just by integrating we find that
$$
0 \leq  \int_\S -w''(u_n)|\nabla u_n|^2 \leq O(1).
$$

As a consequence, the right-hand side of the first equation in \eqref{w(u)} is bounded in $L^1$, whereas the boundary data are bounded in $L^\infty$. By elliptic regularity estimates, $\int_\S |\nabla w_n|^q < O(1)$ for any $q \in (1,2)$. This implies \emph{a)}.

\medskip We now prove \emph{b)}. We multiply \eqref{ecua-compact} by $u_n^-$ and integrate, to get 
\begin{align*} \int_\S  |\nabla u_n^-|^2 + 2 u_n^- (\tilde{K}_n + |K_n| e^{u_n})]   =  2 \oint_{\partial \S}  u_n^- (h_n e^{u_n/2} - \tilde{h}_n). \end{align*}

Clearly the functions $u_n^- e^{u_n}$, $u_n^- e^{u_n/2}$ are bounded in $L^\infty$. Hence, 
$$  \int_\S  |\nabla u_n^-|^2  = -2 \left [ \int_{\Sigma}   \tilde{K}_n u_n^- + \oint_{\partial \S}  \tilde{h}_n u_n^- \right ] + O(1).$$  We are able to estimate the right hand side by the assumption on $\chi_n$. We split $ u_n^- = (u_n^- - \fint_\S u_n^-) +  \fint_\S u_n^-$, and, making use of \emph{a)}, we obtain:
$$ \left | \int_\S    \tilde{K}_n(u_n^- - \fint_\S u_n^-) \right | \leq O(1), \ \ \left | \oint_{\partial \S}   \tilde{h}_n(u_n^- - \fint_\S u_n^-) \right | \leq O(1).$$
Moreover,
$$
-2  \left [ \int_\S \tilde{K}_n \fint_\S u_n^- + \oint_{\partial \S} \tilde{h}_n \fint_\S u_n^-\right  ] = -2 \chi_n \fint_\S u_n^- \leq 0, 
$$
yielding the assertion. 

Finally, let us show \emph{c)}. We can assume that $K = \overline{B}_p(r)$ where $B_p(4 r) \cap S = \emptyset$ with $p\in\Sigma$ and:

\begin{enumerate}
	\item either $B_p(4 r) \subset int(\S)$,
	\item or $p \in \partial \S$.
	
\end{enumerate}

Via a conformal map we can pass to a problem in $B_0(4r) \subset \R^2$ or $B_0^+(4r) \subset \R^2_+$.

Observe that in any case $u_n \leq C$ in $B_p(2r)$ for some $C>0$. Recall that as in a), we have that the function $v_n = \min \{u_n , C \}$  satisfies  $\int_{\S} |\nabla v_n|^q = O(1)$ for all $q \in [1,2)$. As a consequence, the function $\tilde{u}_n= u_n - \fint_{B_p(2r)} u_n$ is bounded in $L^{q'}(B_p(2r))$ for any $q'>1$.

In case (1) we have that $\tilde{u}_n$ solves:
	
$$
\begin{array}{ll}
-\Delta \tilde{u}_n = f_n,  \qquad & \text{in }B_0(2r),
	\end{array}
$$
with $f_n$ bounded in $L^{\infty}$. We conclude then by local regularity estimates (see for instance \cite[Theorem 8.17 and Theorem 8.32]{gilbarg}).

In case 2 we are led to the problem:

$$
\left\{\begin{array}{ll}
\displaystyle{-\Delta \tilde{u}_n = f_n,}  \qquad & \text{in $B_0^+(2r)$,}\\
\displaystyle{\frac{\partial \tilde{u}_n}{\partial n} =g_n}, \qquad  &\text{on $\Gamma_0^+(2r)$,}
\end{array}\right.
$$
with $f_n$, $g_n$ bounded in $L^{\infty}$. We conclude by local regularity estimates for the Neumann problem (see for instance \cite[Theorem 5.36 and Lemma 5.51]{lieberman}).

\end{proof}

\

%
%
%
%
%

The following lemma gives a Pohozaev type identity, depending on an arbitrary field $F$, which we state and prove for the sake of completeness.

\begin{lemma} \label{lema4} Let $u$ be a solution of 
$$ -\Delta u +2 \tilde{K}= 2 K(x) e^u,  \qquad  \text{in }\Sigma.$$
Then, given any vector field $F: \Sigma \to T \Sigma$, there holds:
\begin{align*}
 \oint_{\partial \Sigma} [4 K(x) e^u ( F\cdot \nu) + 2 (\nabla u \cdot \nu) (\nabla u \cdot F) - |\nabla u|^2 F \cdot \nu]  \\ = \int_{\Sigma} [4 \tilde{K} \nabla u \cdot F +4 e^u ( \nabla K \cdot F + K\  \nabla \cdot F ) + 2 DF(\nabla u, \nabla u)  - \nabla \cdot F |\nabla u|^2]. \end{align*}

\end{lemma}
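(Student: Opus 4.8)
The plan is to obtain the whole identity from a single Pohozaev-type test: multiply the interior equation by $\nabla u \cdot F$ and integrate over $\Sigma$. The asserted formula is then nothing more than the divergence-form rearrangement of the integrations by parts that this produces. Concretely, I would start from
$$ \int_\Sigma \left( -\Delta u + 2\tilde{K} - 2K(x)e^u \right)(\nabla u \cdot F) = 0, $$
and treat the three pieces one at a time, keeping careful track of the boundary fluxes.

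For the Laplacian piece, a first integration by parts gives $-\int_\Sigma \Delta u\,(\nabla u\cdot F) = \int_\Sigma \nabla u\cdot\nabla(\nabla u\cdot F) - \oint_{\partial\Sigma}(\nabla u\cdot\nu)(\nabla u\cdot F)$. The crucial algebraic step is to split the gradient $\nabla(\nabla u\cdot F)$ into the part differentiating $u$ and the part differentiating $F$, which yields $\nabla u\cdot\nabla(\nabla u\cdot F) = \frac12\, F\cdot\nabla|\nabla u|^2 + DF(\nabla u,\nabla u)$; here only the symmetry of the Hessian of $u$ is used, so no commutation of derivatives and hence no curvature term appears. A second integration by parts on the first summand, written as $\frac12\nabla\cdot(|\nabla u|^2 F) - \frac12 |\nabla u|^2\,\nabla\cdot F$, produces the boundary flux $\frac12\oint_{\partial\Sigma}|\nabla u|^2(F\cdot\nu)$ together with the bulk term $-\frac12\int_\Sigma|\nabla u|^2\,\nabla\cdot F$. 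This reproduces exactly the stress-tensor structure that appears on both sides of the claimed identity.

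The nonlinear piece is handled by the observation $e^u\nabla u = \nabla(e^u)$, so that $K e^u(\nabla u\cdot F) = K\,F\cdot\nabla(e^u)$; integrating by parts and expanding $\nabla\cdot(KF)=\nabla K\cdot F + K\,\nabla\cdot F$ gives the boundary term $\oint_{\partial\Sigma}K e^u(F\cdot\nu)$ and the interior term $-\int_\Sigma e^u(\nabla K\cdot F + K\,\nabla\cdot F)$. The constant linear piece $2\tilde{K}(\nabla u\cdot F)$ requires no manipulation. Finally I would collect the three contributions, move every boundary integral to one side and every interior integral to the other, and multiply through by $2$ to match the normalizations, which returns the stated formula verbatim.

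The computation is elementary, and the only genuine difficulty is bookkeeping: correctly ordering the two successive integrations by parts on the Laplacian term, tracking the signs of the three distinct boundary fluxes, and cleanly isolating the tensorial term $DF(\nabla u,\nabla u)$ from the symmetric part $\frac12\,F\cdot\nabla|\nabla u|^2$. Since $\Sigma$ is a surface, one reads $\nabla$, $\nabla\cdot$ and $DF$ with respect to the metric and applies the divergence theorem intrinsically; because all of the above manipulations are covariant and never require commuting second derivatives of $u$, no Gauss-curvature terms are generated and the identity holds precisely as written. In the later applications one may simply work in a local conformal chart, where these operators reduce to the Euclidean ones.
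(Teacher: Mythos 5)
Your proof is correct and takes essentially the same route as the paper: the paper multiplies the equation by $2\,\nabla u\cdot F$ and invokes pointwise the identity $2\Delta u(\nabla u\cdot F)=\nabla\cdot\big(2(\nabla u\cdot F)\nabla u-|\nabla u|^2F\big)-2DF(\nabla u,\nabla u)+|\nabla u|^2\,\nabla\cdot F$, which is precisely what your two successive integrations by parts establish, and it treats the term $K e^u\,\nabla u\cdot F$ by the same integration by parts via $\nabla\cdot(KF)$. Your bookkeeping (including the final multiplication by $2$) checks out and yields the stated identity verbatim.
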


\begin{proof}
We will make use of the following basic identity:

$$ 2 \Delta u ( \nabla u \cdot F) = \nabla  \cdot ( 2 (\nabla u \cdot F) \nabla u - |\nabla u|^2 F ) - 2  DF(\nabla u, \nabla u) + |\nabla u|^2 \nabla \cdot F.$$
With this identity at hand, we just multiply the equation by $2 \nabla u \cdot F$ and integrate, using the divergence theorem. Take into account also that, again by the divergence theorem,
$$ 4 \int_\Sigma K e^u \nabla u \cdot F = 4 \left[ \oint_{\partial \Sigma} K e^u F\cdot \nu  - \int_{\Sigma} e^u ( \nabla K \cdot F + K\  \nabla \cdot F )   \right].$$

\end{proof}

\

\section{Blow-up with unbounded mass}\setcounter{equation}{0}

In this section we  address the question of infinite-mass blow-up. As we shall see later, this is the only possible blow-up scenario if $\chi_n \leq 0$, and it is a completely new phenomenon in this type of problems. 
Let us set:
$$ \rho_n = \oint_{\partial \Sigma} h_n e^{u_n/2} = \int_{\Sigma} |K_n| e^{u_n} + O(1) \to +\infty.$$

Hence, up to a subsequence, we obtain that

\begin{enumerate}

\item $ \rho_n^{-1} h e^{u_n/2} \rightharpoonup \sigma$,

\item $\rho_n^{-1} |K| e^{u_n} \rightharpoonup \xi,$ 

\end{enumerate}
where $\sigma$, $\xi$ are unit positive measures defined on $\partial \S$ and $\S$, respectively, and the above is weak convergence of measures. 

The next proposition implies {\em (3) a)} of Theorem \ref{teo4}.

\begin{proposition} \label{lema3} $\xi|_{int(\Sigma)}=0$ and  $ \xi|_{\partial \Sigma}= \sigma $.
\end{proposition}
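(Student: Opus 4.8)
The plan is to prove the two assertions in turn. For $\xi|_{int(\Sigma)}=0$ I would first note that, by Proposition \ref{blowanal}, the singular set satisfies $S\subset\partial\Sigma$. Hence any interior point $p$ lies outside $S$, so there is a neighbourhood $U\subset int(\Sigma)$ of $p$ and a constant $C$ with $u_n\le C$ on $U$ for $n$ large (otherwise $p\in S$). Then $\rho_n^{-1}|K_n|e^{u_n}\le \rho_n^{-1}\|K_n\|_\infty e^{C}\to 0$ uniformly on $U$, which forces $\xi(U)=0$ by the portmanteau theorem. Covering $int(\Sigma)$ gives $\xi|_{int(\Sigma)}=0$, so $\xi$ is carried by $\partial\Sigma$.

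For $\xi|_{\partial\Sigma}=\sigma$ I would test against an arbitrary $\phi\in C^1(\overline\Sigma)$. Multiplying the interior equation by $\phi$, integrating by parts and inserting the boundary condition yields
\[ 2\int_\Sigma |K_n| e^{u_n}\phi - 2\oint_{\partial\Sigma} h_n e^{u_n/2}\phi = -\int_\Sigma \nabla u_n\cdot\nabla\phi - 2\int_\Sigma \tilde K_n\phi - 2\oint_{\partial\Sigma}\tilde h_n\phi . \]
Since $\tilde K_n,\tilde h_n$ are bounded, the last two terms are $O(1)=o(\rho_n)$. Recalling that $\rho_n^{-1}|K_n|e^{u_n}\rightharpoonup\xi$ and $\rho_n^{-1}h_n e^{u_n/2}\rightharpoonup\sigma$ (which follow from the stated convergences together with $K_n\to K$, $h_n\to h$ in $C^0$), dividing by $\rho_n$ reduces the whole proposition to the single estimate $\rho_n^{-1}\int_\Sigma|\nabla u_n|\to 0$.

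This gradient bound is the crux, and the difficulty is exactly the lack of control on the Dirichlet energy. I would split $u_n=u_n^++u_n^-$; the negative part is harmless since Lemma \ref{u^-} a) gives $\int_\Sigma|\nabla u_n^-|=O(1)$. For $u_n^+$ the key is the weighted estimate
\[ \int_{\{u_n>0\}} e^{-u_n}|\nabla u_n|^2 = O(1), \]
which I would obtain by testing the equation with the bounded function $\Phi(u_n)$, where $\Phi(u)=\tfrac14(1-e^{-u})$ for $u\ge 0$ and $\Phi\equiv 0$ for $u<0$ (smoothed if necessary). Indeed $\int_\Sigma\Phi'(u_n)|\nabla u_n|^2=\tfrac14\int_{\{u_n>0\}}e^{-u_n}|\nabla u_n|^2$ equals $-\int_\Sigma\Phi(u_n)\Delta u_n+\oint_{\partial\Sigma}\Phi(u_n)\partial_\nu u_n$, and here the leading interior contribution $-2\int_\Sigma\Phi(u_n)|K_n|e^{u_n}=-\tfrac12\rho_n+O(1)$ exactly cancels the leading boundary contribution $2\oint_{\partial\Sigma}\Phi(u_n)h_n e^{u_n/2}=\tfrac12\rho_n+O(1)$ produced by the boundary condition, leaving $O(1)$. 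Spotting this interior--boundary cancellation, which reflects the strong competition between the area and boundary masses, is the heart of the argument; the remaining terms are all $O(1)$.

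Once this is established, Cauchy--Schwarz finishes the estimate:
\[ \int_{\{u_n>0\}}|\nabla u_n| \le \Big(\int_{\{u_n>0\}}e^{-u_n}|\nabla u_n|^2\Big)^{1/2}\Big(\int_{\{u_n>0\}}e^{u_n}\Big)^{1/2}=O(1)\cdot O(\rho_n^{1/2})=o(\rho_n), \]
where I used $\int_\Sigma e^{u_n}=O(\rho_n)$, which holds because $|K_n|\ge c>0$ for $n$ large. Combining the two parts gives $\rho_n^{-1}\int_\Sigma|\nabla u_n|\to 0$, whence $\int_\Sigma\phi\,d\xi=\oint_{\partial\Sigma}\phi\,d\sigma$ for every $\phi\in C^1(\overline\Sigma)$; together with $\xi|_{int(\Sigma)}=0$ this yields $\xi|_{\partial\Sigma}=\sigma$. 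I expect the weighted identity and the cancellation to be the only genuinely delicate point, the rest being routine integration by parts and measure theory.
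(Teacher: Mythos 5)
Your proof is correct, but the decisive step takes a genuinely different route from the paper's. For $\xi|_{\partial \Sigma}=\sigma$ the paper integrates by parts \emph{twice}, so that the remainder involves $u_n$ itself rather than $\nabla u_n$: it then suffices to note that $\int_\Sigma u_n^+ + \oint_{\partial\Sigma} u_n^+ = o(\rho_n)$ (from $u^+\le \varepsilon e^u + C_\varepsilon$) and that $u_n^- - \fint_\Sigma u_n^-$ is bounded in $L^1$ via Lemma \ref{u^-} \emph{a)}. You instead integrate by parts only once and must control $\rho_n^{-1}\int_\Sigma|\nabla u_n|$, which you do through the weighted identity $\int_{\{u_n>0\}}e^{-u_n}|\nabla u_n|^2=O(1)$ obtained by testing with $\Phi(u_n)$, $\Phi(u)=\tfrac14(1-e^{-u})^+$. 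I checked this identity: the leading terms $2\oint_{\partial\Sigma}h_ne^{u_n/2}\Phi(u_n)$ and $-2\int_\Sigma|K_n|e^{u_n}\Phi(u_n)$ are each $\pm\tfrac12\rho_n+O(1)$ (the corrections $\oint_{\{u_n>0\}}h_ne^{-u_n/2}$ and $\int_{\{u_n>0\}}|K_n|$ being bounded), so the cancellation is genuine and the Cauchy--Schwarz step then gives $\int_\Sigma|\nabla u_n|=O(1)+O(\rho_n^{1/2})=o(\rho_n)$. Your approach costs an extra idea where the paper needs none, but it buys a global $W^{1,1}$-type bound on $u_n$ that the paper does not record, and it only requires $\phi\in C^1$ rather than $C^2$. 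Two minor remarks: your first paragraph is logically redundant, since the identity $\int_\Sigma\phi\,d\xi=\oint_{\partial\Sigma}\phi\,d\sigma$ for all test functions already forces $\xi|_{int(\Sigma)}=0$; and the implication ``$u_n$ not uniformly bounded above near $p$ implies $p\in S$'' strictly requires passing to a further subsequence with the paper's literal definition \eqref{singularset}, which is harmless here because $\sigma$ and $\xi$ are themselves subsequential limits.
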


\begin{proof}

Fix $\phi \in C^2(\Sigma)$, multiply equation \eqref{ecua} by $\phi$ and use Green's formula, to obtain:
$$ 2 \oint_{\partial \Sigma} [h_n e^{u_n/2} - \tilde{h}_n]  \phi -2 \int_{\Sigma} [\hat{K}_n \phi + |K_n|e^{u_n} \phi] =  \int_{\Sigma} u_n \Delta \phi  + \oint_{\partial \S} \frac{\partial \phi}{\partial \nu} u_n.$$

We now estimate the right hand side taking into account the positive and negative parts of $u_n$, 
where we recall that in our notation $u^+ = \max \{u, 0\}$ and $u^- = \min \{u, 0\}$: 
$$\left | \int_{\Sigma} u_n^+ \Delta \phi   + \oint_{\partial \S} \frac{\partial \phi}{\partial \nu} u_n^+ \right | \leq C \int_\S u_n^+ + \oint_{\partial \S} u_n^+ = o (\rho_n).$$

Moreover,
$$\int_{\Sigma} u_n^- \Delta \phi   + \oint_{\partial \S} \frac{\partial \phi}{\partial \nu} u_n^- = \int_{\Sigma} (u_n^- - \fint_\S u_n^-) \Delta \phi   + \oint_{\partial \S} \frac{\partial \phi}{\partial \nu} (u_n^- - \fint_\S u_n^-),$$

and
$$ \int_\S |u_n^- - \fint_\S u_n^-| + \oint_{\partial \S} |u_n^- - \fint_\S u_n^-| \leq C \int_\S | \nabla u_n^-|^q,$$
for any $q\in (1,2)$, and this last quantity is bounded by Lemma \ref{u^-}, \emph{a)}.

Since $\phi$ is arbitrary, we can conclude.
\end{proof}

Observe that $supp \, \sigma \subset S \subset \{p \in \partial \S: \ \mathfrak{D}(p) \geq 1\}$ by Proposition \ref{blowanal}. In what follows we will show that $\supp \sigma \subset \{ p \in \partial \S: \ \mathfrak{D}_{\tau}(p)=0\}$. This will be accomplished by making use of the Pohozaev-type identity given in Lemma \ref{lema4} on fields that are tangential to $\partial \S$. However, here the question is delicate since the support of $\sigma$ need not be finite. Moreover, and more importantly, we do not have any control of the asymptotic behavior of the Dirichlet energy of the solutions. 

The idea is to apply Lemma \ref{lema4} to holomorphic fields $F$. In this way, the Cauchy-Riemann equations imply that the terms involving the Dirichlet energy vanish. For this, we will need to first pass to an analytic setting via a conformal map.

\begin{proposition}\label{p:int-parts}  $\supp \sigma \subset \{p \in \partial \Sigma:\ \mathfrak{D}_\tau(p)=0 \}$. \end{proposition}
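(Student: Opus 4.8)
The plan is to localize the problem near an arbitrary point $p \in \supp \sigma \subset \partial \S$ with $\mathfrak{D}(p) \geq 1$, and to derive a contradiction unless $\mathfrak{D}_\tau(p)=0$. Via a conformal map I would pass to a half-disk $B_0^+(r) \subset \R^2_+$ sending $p$ to the origin, so that the metric becomes $e^{2\varphi}(ds^2+dt^2)$ for some smooth $\varphi$, the equation for $u_n$ takes its Euclidean form (absorbing the conformal factor into the curvatures, which is harmless since $\mathfrak{D}$ is scale-invariant), and the boundary $\partial \S$ corresponds to $\Gamma_0^+(r)=\{t=0\}$. The key idea, as announced, is to apply the Pohozaev identity of Lemma \ref{lema4} on a \emph{holomorphic} vector field $F$, so that the Cauchy-Riemann equations force $DF$ to be a multiple of the identity and the two Dirichlet terms $2DF(\nabla u_n, \nabla u_n) - (\nabla\cdot F)|\nabla u_n|^2$ cancel pointwise in the interior; likewise on the boundary, for $F$ tangent to $\{t=0\}$, the terms $2(\nabla u_n\cdot \nu)(\nabla u_n \cdot F)-|\nabla u_n|^2 F\cdot\nu$ combine in a way controlled by the Cauchy-Riemann structure. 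This is precisely what sidesteps the lack of control on $\int |\nabla u_n|^2$.

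Concretely, I would fix a holomorphic field $F$ on $B_0^+(r)$ that is tangent to the real axis there (for instance $F(z)=iz$ or, more flexibly, $F=f(z)$ with $f$ holomorphic and real on $\{t=0\}$), multiply the interior equation by $2\nabla u_n \cdot F$ and integrate as in Lemma \ref{lema4}. On the straight part $\Gamma_0^+$ I would substitute the Neumann condition $\partial u_n/\partial\nu = 2h_n e^{u_n/2}-2\tilde h_n$, while on the curved part $\partial^+ B_0(r)$ the boundary integrals are taken over a compact set disjoint from $S$, where Lemma \ref{u^-} \emph{c)} gives $\int |\nabla u_n|^2 = O(1)$ and hence those terms are $O(1)=o(\rho_n)$. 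The surviving leading-order boundary contribution along $\Gamma_0^+$ should be, after using $\rho_n^{-1}h_ne^{u_n/2}\weakto\sigma$ and $\rho_n^{-1}|K_n|e^{u_n}\weakto\sigma$ (Proposition \ref{lema3}), an integral of the tangential derivative of $\mathfrak{D}$ against $\sigma$, of the schematic form $\int \mathfrak{D}_\tau \,(F\cdot\tau)\,d\sigma$.

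The main obstacle, which I expect to absorb most of the work, is the rigorous bookkeeping of the boundary terms after rescaling by $\rho_n$: one must show that every term except the one proportional to $\mathfrak{D}_\tau$ is $o(\rho_n)$. The interior volume terms $4\tilde K_n \nabla u_n \cdot F$ are controlled by $\int|\nabla u_n^-|^q=O(1)$ together with an $o(\rho_n)$ estimate on $u_n^+$, exactly as in Proposition \ref{lema3}; the terms $4e^{u_n}(\nabla K_n\cdot F + K_n\nabla\cdot F)$ carry the mass $\rho_n$ and must be matched against the boundary mass, and it is here that the holomorphicity of $F$ is essential to make the interior area term and the boundary length term conspire. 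The delicate point is that $\supp\sigma$ need not be finite, so I cannot argue pointwise at isolated blow-up points; instead I would test with a \emph{one-parameter family} of holomorphic fields $F$ localized near $p$ (multiplying $F$ by a cutoff and estimating the error using the Cauchy-Riemann cancellation away from the support region). Passing to the limit $n\to\infty$ and then shrinking the localization yields, for $\sigma$-a.e.\ $p$, the identity $\mathfrak{D}_\tau(p)=0$; since $\mathfrak{D}_\tau$ is continuous this forces $\supp\sigma \subset \{\mathfrak{D}_\tau=0\}$, as claimed.
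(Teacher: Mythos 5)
You have correctly identified the central mechanism of the paper's proof: apply the Pohozaev identity of Lemma \ref{lema4} to a \emph{holomorphic} field tangent to the boundary, so that the Cauchy--Riemann equations make $2DF(\nabla u_n,\nabla u_n)-(\nabla\cdot F)|\nabla u_n|^2$ vanish identically and the uncontrolled Dirichlet energy never enters. However, your localization strategy contains a genuine gap. You propose to work near a single point $p\in\supp\sigma$ and to cut off a holomorphic field in a neighbourhood of $p$, "estimating the error using the Cauchy--Riemann cancellation away from the support region.'' This is precisely what cannot be done when $p$ is not isolated in $\supp\sigma$ (and the whole point of the proposition is that $\supp\sigma$ need not be finite): the transition region of any cutoff localized at $p$ must meet $\partial\Sigma$ near $p$, hence may carry positive $\sigma$-mass and intersect $S$. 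There, two things break. First, $\phi F$ is no longer holomorphic, and the error in the Dirichlet terms is of size $|\nabla\phi|\,|F|\,|\nabla u_n|^2$ on a set touching $S$, where you have no bound on $\int|\nabla u_n|^2$. Second, the leading-order cancellation of the $f_\tau$-type terms relies on the identity $\nabla\cdot F=2f_\tau$ on the boundary (the factor $2$ coming from holomorphy), whereas $\nabla\cdot(\phi F)=2\phi f_\tau+f\phi_\tau$ differs from $2(\phi f)_\tau$ by $f\phi_\tau$; after dividing by $\rho_n$ and using Proposition \ref{lema3}, this leaves a residual term proportional to $\int f\phi_\tau\,d\sigma$, which does not vanish if $\sigma$ charges the transition region.

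The paper avoids this entirely by \emph{globalizing} rather than localizing: it takes a whole connected component $\Lambda_0$ of $\partial\Sigma$ charged by $\sigma$, uniformizes an annular neighbourhood of $\Lambda_0$ to a flat annulus $A(0;r,1)$, takes a real-\emph{analytic} $f$ on $\Lambda_0$ (analyticity is needed so that $f\tau$ admits a holomorphic extension -- a point your proposal omits), extends it by analytic continuation, and places the cutoff deep in the interior of the annulus, i.e.\ on a compact set disjoint from $S\subset\partial\Sigma$ where Lemma \ref{u^-}~\emph{c)} controls everything. The Cauchy--Riemann cancellation then holds on a full neighbourhood of $\Lambda_0$, where all the blow-up occurs, and the limit identity $\oint_{\Lambda_0}\bigl(2h_\tau/h-K_\tau/K\bigr)f\,d\sigma=0$ for all analytic $f$ is upgraded to the vanishing of the measure $\bigl(2\mathfrak{D}_\tau/\mathfrak{D}\bigr)\sigma$ by density of analytic functions among continuous ones. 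To repair your argument you would need to adopt this global structure (or an equivalent device that keeps the cutoff's gradient away from $S$); the pointwise localization as written does not close.
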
 

\begin{proof}
	
Let $\Lambda_0$ be a connected component of $\partial \S$ such that $\sigma|_{\Lambda_0} \neq 0$, and 
consider a smooth neighborhood $U$. By the Uniformization Theorem for annuli (see for instance \cite[IV.7]{farkas}) we can pass via a conformal map to a problem in the annulus $A(0; r, 1)$ for some $r>0$. That is, we need to consider:

$$
\left\{\begin{array}{ll}
\displaystyle{-\Delta_g u_n +2 \tilde{K}_n(x)= 2 K_n(x) e^{u_n}},  \qquad & \text{in } A(0; r, 1), \\
\displaystyle{\frac{\partial u_n}{\partial n} +2 \tilde{h}_n(x)  = 2h_n(x)e^{u_n/2}}, \qquad  &\text{on
	$\Lambda_0 $,}
\end{array}\right.
$$
where $g$ is a metric conformal to the standard one $g_0$, that is, $g = e^v g_0$. Here we identify $\Lambda_0 = \{|x|=1\}$. 

As a consequence, the function $\hat{u}_n = u_n + v$ satisfies the following equation with respect to $g_0$:

\begin{equation}\label{anillo}
\left\{\begin{array}{ll}
\displaystyle{-\Delta \hat{u}_n +2 \hat{K}_n(x)= 2 K_n(x) e^{\hat{u}_n}},  \qquad & \text{in } A(0; r, 1),\\
\displaystyle{\frac{\partial u_n}{\partial n} +2 \hat{h}_n(x)  = 2h_n(x)e^{\hat{u}_n/2}}, \qquad  &\text{on
	$\Lambda_0 $,}
\end{array}\right.
\end{equation}
for some smooth functions $\hat{K}_n$, $\hat{h}_n$. In what follows we can consider problem \eqref{anillo} in a flat annulus and prove the statement of Proposition \ref{p:int-parts}.

\

Take any analytic real function $f$ defined in $\Lambda_0$, and consider $F: \Lambda_0 \to \mathbb{C}$ defined by $F(p)= f(p) \tau(p)$, where $\tau(p)$ is the tangent unit vector. By analytic continuation we can extend $F$ to a holomorphic function $F: A(0;r,1) \to \mathbb{C}$, by taking $r$ closer to $1$ if necessary. We define $\tilde{F}(p)= F(p) \phi(p)$, where $\phi$ is a cut-off such that $\phi = 1$ in $A(0; r_0, 1)$ and $\phi=0 $ in $A(0; r, r_1)$ with $r<r_1 <r_0$. We apply Lemma \ref{lema4} to the field $F$ to obtain:
\begin{align*} &  \oint_{\Lambda_0}  4 f (h_n e^{\hat{u}_n/2} - \hat{h}_n)  (\hat{u}_n)_\tau  \\ & = \int_{A(0;r_0,1)}  [4 \hat{K}_n \nabla \hat{u}_n \cdot F + 4 e^{\hat{u}_n} ( \nabla K_n \cdot F + K_n\  \nabla \cdot F)] + O(1). \nonumber  \end{align*}

Observe that here we are using Lemma \ref{u^-}, c). Let us also point out here that $ 2 DF(\nabla \hat{u}_n, \nabla \hat{u}_n) - \nabla \cdot F |\nabla \hat{u}_n|^2=0$ by the Cauchy-Riemann equations.

We now get rid of 
$$ \int_{A(0;r_0,1)}  4 \hat{K}_n \nabla \hat{u}_n \cdot F= \int_{A(0;r_0,1)}  4 \hat{K}_n (\nabla \hat{u}_n^+ + \nabla \hat{u}_n^-) \cdot F.$$

The term $ \int_{A(0;r_0,1)}  4 \hat{K}_n \nabla \hat{u}_n^- \cdot F$ is bounded by Lemma \ref{u^-}, whereas the 
one with the positive part of $\hat{u}_n$ can be estimated via an integration by parts:

\begin{align*} \left | \int_{A(0;r_0,1)}  4 \hat{K}_n \nabla \hat{u}_n^+ \cdot F \right | \leq  \left | \oint_{\Lambda_0} 4 \hat{K}_n \hat{u}_n^+ F \cdot \nu - \int_{A(0;r_0,1)} \hat{u}_n^+ \nabla \cdot (\hat{K}_n F) \right | + O(1) \\ \leq C \left ( \oint_{\Lambda_0} \hat{u}_n^+ + \int_{A(0;r_0,1)} \hat{u}_n^+ \right ) + O(1)=o( \rho_n). \end{align*}

In the same way we can estimate the term:

$$\oint_{\Lambda_0}  f  \hat{h}_n  (\hat{u}_n)_\tau = \oint_{\Lambda_0}  f  \hat{h}_n  (\hat{u}_n^+ + \hat{u}_n^-)_\tau. $$

Indeed,

\begin{align*}
\oint_{\Lambda_0}  f  \hat{h}_n  (\hat{u}_n^-)_\tau = \oint_{\Lambda_0}  f  \hat{h}_n  \left (\hat{u}_n^- - \fint_{\Lambda_0} u_n^- \right )_\tau = \\ - \oint_{\Lambda_0} ( f  \hat{h}_n)_{\tau}  \left (\hat{u}_n^- - \fint_{\Lambda_0} u_n^- \right ) = O(1), 
\end{align*}
by Lemma \ref{u^-}. Moreover,

$$ \oint_{\Lambda_0}  f  \hat{h}_n  (\hat{u}_n^+)_\tau = - \oint_{\Lambda_0}  (f  \hat{h}_n)_{\tau} \hat{u}_n^+ = o(\rho_n).$$

Integrating by parts we find
$$ 4  \oint_{\Lambda_0}  h_n f e^{\hat{u}_n/2} (\nabla \hat{u}_n \cdot F)= -8  \oint_{\Lambda_0}  (h_n f)_{\tau} e^{\hat{u}_n/2} .$$

Then,

$$ -8 \oint_{\Lambda_0}   ((h_n)_\tau f + h_ n f_\tau) e^{\hat{u}_n/2} = \int_{A(0;r_0,1)} (4 \nabla K_n \cdot F + 4 K_n \nabla \cdot F )  e^{\hat{u}_n}  + o(\rho_n).$$

Observe now that on $\Lambda_0$, $\nabla \cdot F= 2 f_{\tau}$. By Proposition \ref{lema3}, we can divide by $\rho_n$ and pass to the limit to obtain:

$$
 -8 \oint_{\Lambda_0}  \left( \frac{h_\tau}{h} f + f_\tau \right) \, d \sigma = - \oint_{\Lambda_0} 
 \left( 4 f \frac{K_\tau}{K} + 8 f_\tau \right)  \, d \sigma.
$$

Recall that in the support of $\sigma$ we have the inequality $\mathfrak{D}(p) \geq 1$, which implies that $h(p)$ is positive: this allows us to write $h$ 
in the denominator. 

The terms in $f_\tau$ cancel and we can rewrite this expression as:

$$
 \oint_{\Lambda_0} \left ( 2  \frac{h_\tau}{h} -   \frac{K_{\tau}}{K}   \right ) f \, d \sigma= 0.
$$

Let us define the measure $\mu =  \left ( 2  \frac{h_\tau}{h} -   \frac{K_{\tau}}{K}   \right ) \sigma$. Then, we have obtained that $ \oint_{\Lambda_0} f \, d\mu=0$ for any analytic real function $f$. Since the analytic functions form a dense subset of the space of continuous functions, we conclude that $\mu=0$. 

Finally, notice that
 
$$ 
 2  \frac{h_\tau}{h} -   \frac{K_{\tau}}{K} = 2 \frac{\mathfrak{D}_\tau}{\mathfrak{D}}.
$$
Hence $\mathfrak{D}_\tau(p)=0$ for any $p \in \supp \sigma$.
\end{proof}

\

\section{Blow-up with bounded mass or  Morse index} \label{s:bu-bdd}
\setcounter{equation}{0}

In this section we consider the case in which the sequence $u_n$ has either bounded mass or bounded Morse index, and we will exploit this information to give a more complete description of the blow-up phenomena, proving {\em (2)} and {\em (4)} in 
Theorem \ref{teo4}. The key ingredient here is that, if $\mathfrak{D}(p)>1$, the only limit profiles with bounded mass or bounded Morse index are the bubbles in the form \eqref{bubble}.

Let us start with the following fact:

\begin{lemma}\label{minimalmass} Assume that $\int_{\S} e^{u_n}$ is bounded in $n$. Then, 
	$$
	S \subset \left\{p\in\partial\Sigma: \mathfrak{D}(p)> 1 \right\}.
	$$
\end{lemma}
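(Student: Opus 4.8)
The plan is to combine the classification of limit profiles recalled in Section \ref{app-A} with the rescaling procedure already set up in the proof of Proposition \ref{blowanal}. By that proposition we know a priori that $S \subset \{p \in \partial \Sigma : \mathfrak{D}(p) \geq 1\}$, so it suffices to exclude points with $\mathfrak{D}(p) = 1$. I would argue by contradiction, assuming there exists $p \in S$ with $\mathfrak{D}(p) = 1$.

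Around such a point I would repeat verbatim the blow-up construction of Proposition \ref{blowanal}: pass conformally to a flat half-ball model, select via Ekeland's principle a sequence $x_n \to p$ with $u_n(x_n) \to +\infty$, and set $\delta_n$ as in \eqref{deltaeps} and $v_n$ as in \eqref{scalesolution}. Since $K(p) < 0$, the interior alternative (Case 2) is ruled out by Liouville's theorem, so we are in Case 1 and, after extracting a subsequence, obtain a limit $v$ in $C^2_{loc}$ solving the half-plane problem \eqref{equaplane0} with $K_0 = K(p) < 0$ and $h_0 = h(p)$, hence $\mathfrak{D}_0 = \mathfrak{D}(p) = 1$; by Remark \ref{bordo} the limit domain is exactly $\R^2_+$. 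By the classification recalled in Section \ref{app-A}, the only solution with $\mathfrak{D}_0 = 1$ is the one-dimensional profile \eqref{profile2}, and such a profile has infinite area, $\int_{\R^2_+} e^v = +\infty$, since it is independent of the tangential variable $s$.

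It remains to contradict this using the hypothesis that $\int_{\Sigma} e^{u_n}$ is bounded. The change of variables $y = \delta_n x + x_n$ gives $\int_{\tilde B_n} e^{v_n}\,dx = \int_{B_n} e^{u_n}\,dy$, which is bounded by $\int_\Sigma e^{u_n} \leq C$ uniformly in $n$ (the bounded conformal factor relating $\Sigma$ to the flat model keeps the metric area and the Euclidean area comparable near $p$). Since $v_n \to v$ in $C^2_{loc}(\R^2_+)$, for every compact $K \subset \R^2_+$ we have $\int_K e^v = \lim_n \int_K e^{v_n} \leq C$; letting $K$ exhaust $\R^2_+$ yields $\int_{\R^2_+} e^v \leq C < +\infty$, contradicting the infinite area of the one-dimensional profile. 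Hence no $p \in S$ can satisfy $\mathfrak{D}(p) = 1$, which is the claim.

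The argument is short once Proposition \ref{blowanal} and the classification are available, so I do not expect a serious obstacle; the only point requiring care is the mass bookkeeping under rescaling, namely verifying that the finite-area hypothesis on $\Sigma$ genuinely transfers to finite area of the limit $v$ and that the conformal factors do not destroy this comparison. Once that is in place, the incompatibility between finite limiting area and the infinite area forced by the $\mathfrak{D}_0 = 1$ classification closes the proof.
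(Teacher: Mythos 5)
Your argument is correct and is essentially the paper's own proof: both rescale as in Proposition \ref{blowanal}, pass to the half-plane limit, invoke the classification to see that $\mathfrak{D}(p)=1$ forces the $s$-independent profile \eqref{profile2} of infinite area, and derive a contradiction from the scale-invariance of the mass $\int e^{u_n}$ (the paper phrases the last step via Fatou's lemma rather than compact exhaustion, which is the same estimate). No gaps.
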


\begin{proof}
	Let $p$ be a point in $S$, and consider the function $v_n$ defined in \eqref{scalesolution}. By Fatou's lemma, the classification of the entire solutions on the upper half-plane and Remark~\ref{bordo}, we find that for some $r>0$
	
	$$
	C \geq \lim_{n\to+\infty} \int_{B^+_{p}(r)} |K_n|e^{u_n} = \lim_{n\to+\infty} \int_{\tilde{B}_n} |K_n|e^{v_n} \geq \int_{\mathbb{R}^2_+} |K(p_0)|e^{v}, 
	$$
	where $\tilde{B}_n=B_{\frac{x_n-p}{\delta_n}}^+(\frac{r}{\delta_n})$ and $v$ is an entire solution of the limit problem \eqref{ecualimit1}. Observe that if $\mathfrak{D}(p)=1$, 
	$v$ is of the form \eqref{profile2} (see Section 4), so the upper bound on the volume 
	is violated. Therefore $\mathfrak{D}(p)>1$. 
	
\end{proof}

The main result of the section is the following proposition, which completes the proof of Theorem \ref{teo4} and includes the a\-sym\-ptotic behavior of the blowing-up sequences of solutions near the singular points.

\begin{proposition}\label{p:6.3}
	Let $u_n$ be a blowing-up sequence  (namely, $ \sup_\Sigma  u_n  \to +\infty$) of solutions to \eqref{ecua-compact}, under the conditions of Theorem \ref{teo4}. Assume also that:
	
\begin{equation} \label{cond} \mbox{ either} \int_{\S} e^{u_n}  \ \mbox{ or } \ ind(u_n) \mbox{ is bounded.} \end{equation}
	
	Then $S= S_0 \cup S_1$, with:
	$$S_0 \subset  \{p \in \partial \Sigma:\ \mathfrak{D}(p) = 1,\ \mathfrak{D}_{\tau}(p) =0\},$$
	$$S_1 =\{p_1,\dots p_m\}  \subset  \{p\in\partial\S : \mathfrak{D}(p) > 1 \}.$$
	
	Moreover, the following asymptotics hold:
	
	\
	
\begin{enumerate}
	\item[a)] 	If $p \in S_0$, there exists $x_n \to p$, $\lambda_n \to 0$, $R_n \to +\infty$ such that $d(x_n, \partial \S) = o (\lambda_n)$ and
	\begin{equation} \label{eq:profile-6}
	u_n(y) = v_{\lambda_n}(0, t) + o(1), \ \ y \in B_{x_n}(R_n \delta_n),
	\end{equation}
	where $v_\lambda$ is the $1-D$ solution given in \eqref{profile2}, and $t= d(y, \partial \S)$.

	\item[b)] If $p \in S_1$, there exists $(s_n,t_n)\to p$ and $\lambda_n\to 0$
	\beq\label{profile}
	u_n(s,t) =  2 \log\left( \frac{2\lambda_n}{(s-s_n)^2+(t+\mathfrak{D}(p)\lambda_n)^2-\lambda^2_n} \right)+O(1),
\eeq
in $B^+_{p}(r)\subset\mathbb{R}^2$ with $d(p,p_j)>2r$ for any $j=1,\ldots,m$, where $(s,t)$ is an isothermal coordinate centered at $p$. Moreover, 
	\beq\label{est1}
	\nabla u_n(x) = -4 \frac{x-p}{|x-p|^2} + o\left(\frac{1}{|x-p|^2}\right), \ \ \mbox{ in $B_{p}^+(r)\setminus B_{p}^+(\lambda_n\log\frac{1}{\lambda_n}).$}
	\eeq

	\item[c)] Finally, if $\chi_n \leq 0$,  then $S_1$ is empty.
\end{enumerate}
	
\end{proposition}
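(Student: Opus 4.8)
The plan is to treat the two alternatives in \eqref{cond} in parallel, splitting the singular set according to the value of $\mathfrak{D}$: since $S \subset \{\mathfrak{D} \geq 1\}$ by Proposition \ref{blowanal}, I set $S_0 = S \cap \{\mathfrak{D} = 1\}$ and $S_1 = S \cap \{\mathfrak{D} > 1\}$. The core of the argument is to identify the limit profile at each point of $S$ and to bound the cardinality of $S_1$; the constraint $\mathfrak{D}_\tau = 0$ on $S_0$ and the emptiness of $S_1$ when $\chi_n \leq 0$ will then follow, respectively, from a localized Pohozaev identity and from a Gauss--Bonnet balance.

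For a point $p \in S_1$ I would rescale exactly as in the proof of Proposition \ref{blowanal} (using the Ekeland selection of $x_n$, so that $p$ need not be isolated), obtaining from \eqref{scalesolution} a half-plane solution $v$ with $\mathfrak{D}_0 = \mathfrak{D}(p) > 1$. The decisive step is to exclude the infinite-mass profile. Under bounded mass this is immediate from Fatou's lemma, as in Lemma \ref{minimalmass}. Under bounded index I would argue by contradiction: if $v$ had infinite mass then $ind(v) = +\infty$ by Theorem \ref{t:app-index} \emph{b)}, so for every $N$ there are $N$ compactly supported functions making $Q$ in \eqref{quadratic0} negative-definite; transplanting these through \eqref{scalesolution} and using the $C^2_{loc}$ convergence yields an $N$-dimensional subspace on which $Q_n$ in \eqref{quadratic} is negative-definite, forcing $ind(u_n) \to +\infty$. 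Hence $v$ is the bubble \eqref{bubble}, giving the profile \eqref{profile} and, by Li--Shafrir type estimates away from the center, the gradient asymptotics \eqref{est1}. Finiteness of $S_1$ follows in two ways: under bounded mass each bubble carries boundary mass $\beta_i + 2\pi > 2\pi$, and since $\oint_{\partial\S} h_n e^{u_n/2} = \int_\S |K_n| e^{u_n} + \chi_n$ stays bounded only finitely many fit; under bounded index each finite-mass bubble has index $1$, so the associated negative test functions, chosen with mutually disjoint supports around the distinct points of $S_1$, give $ind(u_n) \geq |S_1|$, whence $|S_1| \leq m$.

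For a point $p \in S_0$ (so $\mathfrak{D}(p) = 1$), bounded mass gives $S_0 = \emptyset$ by Lemma \ref{minimalmass}, leaving the bounded-index case. Rescaling produces a half-plane solution with $\mathfrak{D}_0 = 1$, which by the classification of Section \ref{app-A} is the $1$-D profile \eqref{profile2}; this is consistent with finite index since such solutions are stable by Theorem \ref{t:app-index} \emph{a)}, and it yields \eqref{eq:profile-6} with $d(x_n,\partial\S) = o(\lambda_n)$ from Remark \ref{bordo}. To get $\mathfrak{D}_\tau(p) = 0$ I would localize Proposition \ref{p:int-parts}: after passing conformally to a flat model near $p$, apply Lemma \ref{lema4} to a holomorphic field $F = f\tau$ extending a tangential field supported near $p$. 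The Cauchy--Riemann equations kill the Dirichlet terms, the $u_n^-$ contributions are controlled by Lemma \ref{u^-}, and normalizing by the \emph{local} mass $\rho_n^{p} = \oint_{\partial\S \cap B_p(r)} h_n e^{u_n/2} \to +\infty$ reproduces the cancellation of the $f_\tau$ terms, leaving $\oint (2 h_\tau/h - K_\tau/K)\, f \, d\sigma_p = 0$ for the local limit measure $\sigma_p$; since $p \in \supp \sigma_p$ and $f$ is arbitrary, this forces $\mathfrak{D}_\tau(p) = 0$. I expect this to be the main obstacle: because $S$ need not be discrete, one cannot isolate $p$, and a global normalization by $\rho_n$ may overlook points carrying sub-dominant mass, so the Pohozaev argument must be run purely locally and one must verify that $p$ genuinely lies in the support of $\sigma_p$.

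Finally, for \emph{c)} I would integrate \eqref{ecua-compact} to obtain $\oint_{\partial\S} h_n e^{u_n/2} - \int_\S |K_n| e^{u_n} = \chi_n$. Each finite-mass bubble at a point of $S_1$ contributes $(\beta_i + 2\pi) - \beta_i = 2\pi$ to the left-hand side, while a direct computation on the $1$-D profile \eqref{profile2} shows that the boundary and area contributions of any $S_0$ concentration cancel to leading order; hence $\chi_n \to 2\pi\,|S_1|$, and $\chi_n \leq 0$ forces $S_1 = \emptyset$. A secondary technical difficulty, already used above, is the faithful transfer of the infinite Morse index of an infinite-mass limit back to $u_n$ and the construction of genuinely independent negative-definite test functions in the counting of $S_1$.
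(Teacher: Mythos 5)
Your skeleton matches the paper's: the same decomposition $S_0=S\cap\{\mathfrak{D}=1\}$, $S_1=S\cap\{\mathfrak{D}>1\}$, the rescaling via the Ekeland selection, the exclusion of infinite-mass profiles at points of $S_1$ either by Fatou or by transplanting the infinitely many negative directions provided by Theorem \ref{t:app-index} \emph{b)}, and the counting of $S_1$ by mass or by disjointly supported negative test functions. However, two steps are genuinely incomplete. First, for $\mathfrak{D}_\tau(p)=0$ at $p\in S_0$ you correctly identify the localized Pohozaev identity with a tangential holomorphic field and you correctly flag the obstacle --- that $p$ need not lie in the support of the local limit measure when $S_0$ accumulates at $p$ --- but you do not resolve it; as stated, your argument only gives $\mathfrak{D}_\tau=0$ on the support of the local measure, which may miss $p$ entirely if the dominant local mass concentrates elsewhere in $B_p(r)$. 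The paper's resolution (Lemma \ref{l:D-final}) is a contradiction argument: if $\mathfrak{D}_\tau(p)\neq 0$, then since $S_0\subset\{\mathfrak{D}=1\}$ and $\mathfrak{D}-1$ has a transversal zero at $p$ along $\partial\S$, the point $p$ is isolated in $S_0$, hence (by finiteness of $S_1$) isolated in $S$; the local measure is then exactly $\delta_p$, the local mass diverges because the profile is \eqref{profile2}, and the localized version of Proposition \ref{p:int-parts} with the constant field $F=(1,0)$ applies. This isolation trick is the missing idea.

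Second, your proof of \emph{c)} via a Gauss--Bonnet balance is only sound in the bounded-mass case, where $S_0=\emptyset$ and it reduces to statement \emph{(2)} of Theorem \ref{teo4}. In the bounded-index case with unbounded mass, the claim that ``the boundary and area contributions of any $S_0$ concentration cancel to leading order'' is an $\infty-\infty$ cancellation: both local masses diverge, and the profile \eqref{eq:profile-6} is only known up to $o(1)$ errors and only on the shrinking scale $R_n\delta_n$, so the finite difference of the two divergent integrals is not controlled by the available information. The paper proves \emph{c)} by a different mechanism: the sharp gradient estimate \eqref{est1} near a point of $S_1$ --- whose derivation requires the cluster analysis of Lemma \ref{key}, the quantization $\tilde m=1$ via a Pohozaev identity with $F(x)=x$, and a Green's representation, all of which your sketch compresses into ``Li--Shafrir type estimates'' --- forces $\int_{B_p^+(r)}|\nabla u_n^-|^2\to+\infty$, contradicting the uniform bound $\int_\S|\nabla u_n^-|^2=O(1)$ of Lemma \ref{u^-} \emph{b)}, which holds precisely when $\chi_n\le 0$. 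You would need either to justify the cancellation at $S_0$ (which appears out of reach) or to switch to an argument of this second type.
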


This result follows from the lemmas below.

\begin{lemma}\label{l:vol-fin-S1}
	Under assumption \eqref{cond}, the set $S_1$ is finite. Moreover, let $p \in S_1$ and consider a blow-up profile $v$ at $p$ constructed as in the 
	proof of Proposition \ref{blowanal}. Then $v$ has finite volume.  
\end{lemma}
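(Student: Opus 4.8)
The plan is to combine the classification of half-plane profiles from Section 4 with the two alternatives in \eqref{cond}. First I would record the basic dichotomy. By Proposition \ref{blowanal} together with Remark \ref{bordo} (which forces the blow-up center onto $\partial \S$), any profile $v$ at a point $p \in S_1$ is a solution of \eqref{ecualimit1} with $\mathfrak{D}(p) > 1$; by the classification result recalled in Section 4 and by Theorem \ref{t:app-index}\,b), such a $v$ is \emph{either} of finite volume with $ind(v) = 1$ \emph{or} of infinite volume with $ind(v) = +\infty$. Thus proving that $v$ has finite volume is the same as excluding the infinite-mass alternative, and each of the two hypotheses in \eqref{cond} is designed to do precisely this.

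If $\int_\S e^{u_n}$ is bounded, I would argue as in Lemma \ref{minimalmass}: passing to the rescaled functions $v_n$ of \eqref{scalesolution}, the $C^2_{loc}$ convergence $v_n \to v$ and Fatou's lemma give $\int_{\R^2_+} |K(p)| e^{v} \le \liminf_n \int_{B_p^+(r)} |K_n| e^{u_n} < +\infty$, so $v$ has finite volume and is a bubble \eqref{bubble}. If instead $ind(u_n) \le m$, I would transfer the Morse index downward to the profile. Assuming $v$ had infinite volume, hence $ind(v) = +\infty$, I fix $N > m$ and choose $\psi_1,\dots,\psi_N \in C_0^\infty(\R^2_+)$ spanning a space on which the limiting quadratic form $Q$ of \eqref{quadratic0} is negative definite. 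Setting $\psi_i^n(x) = \psi_i((x-x_n)/\delta_n)$, which concentrate near $p$, I would use the scale-invariance of the Dirichlet integral in dimension two together with $e^{u_n(x)} = \delta_n^{-2} e^{v_n((x-x_n)/\delta_n)}$ and the $C^2_{loc}$ convergence to show $Q_n(\psi_i^n,\psi_j^n) \to Q(\psi_i,\psi_j)$ for all $i,j$, where $Q_n$ is the form \eqref{quadratic}. Then for $n$ large $Q_n$ is negative definite on $\Span\{\psi_1^n,\dots,\psi_N^n\}$, whence $ind(u_n) \ge N > m$, a contradiction; so $v$ has finite volume in this case too.

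For the finiteness of $S_1$ I would exploit that each profile is now a finite-mass bubble, so every point of $S_1$ carries a quantized contribution. Integrating \eqref{ecua-compact} over $\S$ yields $\rho_n := \oint_{\partial\S} h_n e^{u_n/2} = \int_\S |K_n| e^{u_n} + \chi_n$ with $\chi_n = O(1)$. In the bounded-volume case $\rho_n = O(1)$, while near each $p \in S_1$ the local boundary mass is at least $\beta(p) + 2\pi \ge 2\pi$ by \eqref{eq:beta}; since distinct points of $S_1$ admit disjoint neighborhoods, their number is at most $\rho_n/2\pi + o(1) = O(1)$. In the bounded-index case I would instead repeat the Morse-index transfer: if $S_1$ contained $m+1$ distinct points, each profile has $ind \ge 1$ by Theorem \ref{t:app-index}, hence a compactly supported test function on which the local form is negative; rescaling and concentrating these functions near the $m+1$ distinct points gives an $(m+1)$-dimensional space on which $Q_n$ is negative definite, contradicting $ind(u_n) \le m$. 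Thus $|S_1| \le m$.

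The delicate point throughout is the Morse-index transfer. One must verify that the rescaled boundary integrals converge correctly (the boundary length also rescales by $\delta_n$, so that $\oint h_n e^{u_n/2}(\psi_i^n)^2 \to \oint h(p) e^{v/2}\psi_i^2$), and, for the finiteness argument, that the concentration scales at different points shrink fast enough for the supports $B_{x_n^{(j)}}^+(\rho\,\delta_n^{(j)})$ to become pairwise disjoint, despite $S_1$ not being a priori discrete. This is exactly why it is essential that the $m+1$ chosen centers are genuinely distinct points of $\partial\S$: their positive mutual distances force disjointness of the rescaled supports for $n$ large, and hence the negative directions they produce are linearly independent.
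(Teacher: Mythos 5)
Your argument is correct and follows essentially the same route as the paper: Fatou's lemma rules out infinite-volume profiles under the bounded-mass hypothesis, rescaled compactly supported negative directions transfer the (infinite) Morse index of an infinite-volume profile to $u_n$ under the index bound, and the quantization of mass (via \eqref{eq:beta}) respectively of index (one negative direction per finite-volume bubble, with disjoint supports at distinct points) bounds $\# S_1$. The only refinement worth making is in your mass-counting step: it is cleaner to count the \emph{interior} masses, $\liminf_n \int_{B_p(r)\cap\S}|K_n|e^{u_n}\ge \beta(p)>0$ with $\beta$ uniformly bounded below on $\{\mathfrak{D}>1\}$, whose sum over disjoint neighbourhoods is directly controlled by the bounded nonnegative quantity $\int_\S|K_n|e^{u_n}$, since the boundary density $h_ne^{u_n/2}$ may be negative away from $S$ and your comparison with $\rho_n$ would otherwise need the extra (true, but unstated) observation that this negative contribution is $O(1)$.
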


\begin{proof}
	Recalling \eqref{ecua-compact}, the Morse index bound is related to the 
	quadratic form \eqref{quadratic}. Recall also that the limit profile $v$ is defined as the limit of the sequence 
	$$
	v_n(x) = u_n(\delta_n x + x_n) + 2 \log \delta_n; \qquad \delta_n = e^{-u_n(x_n)/2}. 
	$$
	We claim that \eqref{finitemass} is satisfied for $v$. Otherwise, by Fatou's lemma, $\int_{\S} e^{u_n}$ is unbounded. Moreover, by Theorem  \ref{t:app-index}  {\em b)}   we can find compactly-supported functions 
	(with disjoint supports) $\psi_1, \dots, \psi_{m+1}$  such that 
	$$ a_i := \int_{\R^2_+}  [|\nabla (\psi_i)|^2 + 2 e^v (\psi_i)^2]  - \int_{\partial \R^2_+} \frac{1}{\mathfrak{D}(p)}  e^{v/2} (\psi_i)^2 < 0$$
	for all $i = 1, \dots, m+1$.  
	
	Define then 
	$$
	\psi_{i,n} = \psi_i \left( \frac{x-x_n}{\delta_n} \right). 
	$$
	By scaling variables, recalling the expression in \eqref{quadratic}, it is then easy to see that 
	$$
	Q_n(\psi_{i,n}) = a_i + o_n(1) < 0; \qquad \quad  i = 1, \dots, m+1. 
	$$
	Since also the $\psi_{i,n}$'s have disjoint supports, we deduce that $ind(u_n) \geq m+1 $ for $n$ 
	large. In this way we arrive to a contradiction with \eqref{cond}. 
	
	\
	
We know now that for any point in $S_1$ we can construct a blow-up profile $v$ 
	that satisfies \eqref{finitemass}, and in particular its Morse index is equal to $1$. Moreover, by \eqref{eq:beta},

	$$
	\liminf_{n \to +\infty} \int_{\S} |K_n| e^{u_n} \geq \int_{\mathbb{R}^2_+} |K(p_0)| e^v= 2\pi \left(\frac{ h(p_0)}{\sqrt{h^2(p_0)+K(p_0)}}-1 \right)>\delta,
	$$
	for a suitable $\delta>0$. Hence condition \eqref{cond} implies that $S$ is finite.
	
	\end{proof}

\

\begin{lemma}\label{key}
	Let $p \in S_1$. Then there exists  fixed constants $r, C > 0$ such that 
	$$
	\int_{B_{p}(r)\cap\Sigma} e^{u_n} \leq C; \qquad \oint_{B_{p}(r)\cap\partial\Sigma} e^{u_n/2} \leq C, \quad u_n \to -\infty \mbox{ on } \partial B_p(r) \cap \Sigma.
	$$
	
\end{lemma}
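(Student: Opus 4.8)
The plan is to read this as an \emph{isolated, bounded-mass} boundary blow-up at a point with $\mathfrak{D}(p)>1$, and to deduce the decay from the resulting concentration picture. The first step is to note that $p$ is isolated in $S$: by Theorem \ref{teo4} {\em (1)} we have $S\subset\partial\S$, by Lemma \ref{l:vol-fin-S1} the set $S_1=S\cap\{\mathfrak{D}>1\}$ is finite, and since $\mathfrak{D}$ is continuous with $\mathfrak{D}(p)>1$ there is a neighbourhood of $p$ on which $\mathfrak{D}>1$, so no point of $S_0\subset\{\mathfrak{D}=1\}$ accumulates at $p$. I therefore fix $r>0$ with $\overline{B_p(r)}\cap S=\{p\}$ and $\mathfrak{D}>1$ on $\overline{B_p(r)}\cap\partial\S$. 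On the half-annulus $A(p;r/2,2r)$ there is no blow-up, so $u_n$ is bounded above there and, by Lemma \ref{u^-} {\em c)}, has uniformly bounded oscillation; in particular the third assertion reduces to showing $\sup_{\partial B_p(r)\cap\S}u_n\to-\infty$, equivalently $\fint_{\partial B_p(r)}u_n\to-\infty$.

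Next I would bound the two local masses, distinguishing the two cases in \eqref{cond}. If $\int_\S e^{u_n}$ is bounded, then so is $\int_\S|K_n|e^{u_n}$, and from the (integrated) identity $\oint_{\partial\S}h_ne^{u_n/2}-\int_\S|K_n|e^{u_n}=\chi_n=O(1)$ the boundary mass is bounded too; since $h(p)>0$, both $\int_{B_p(r)\cap\S}e^{u_n}$ and $\oint_{B_p(r)\cap\partial\S}e^{u_n/2}$ are then controlled. If instead only $ind(u_n)\le m$, I argue by contradiction. Away from $p$ one has $u_n\le C$, so the mass outside any $B_p(\varepsilon)$ is bounded; thus a divergence of the local mass forces concentration at $p$. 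Running the selection of Proposition \ref{blowanal} repeatedly at $p$ produces bubbles (possibly a tower at separated scales), each of which, by Theorem \ref{t:app-index}, supplies a compactly supported test function on which the quadratic form \eqref{quadratic} is strictly negative; transplanting and rescaling these with mutually disjoint supports exactly as in Lemma \ref{l:vol-fin-S1} shows that the number of bubbles is at most $m+O(1)$ and that none can carry infinite mass (which would already force $ind(u_n)\to+\infty$). Since any divergent residual mass would itself concentrate and yield a further bubble, the local mass must be bounded, a contradiction.

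With both masses bounded and $p$ the only blow-up point of $B_p(r)$, I would obtain the decay by integrating the equation on $B_p^+(\rho)$: using the divergence theorem and the Neumann condition,
$$
\oint_{\partial^+B_p(\rho)}\partial_\nu u_n=2\int_{B_p^+(\rho)}|K_n|e^{u_n}-2\oint_{\Gamma_p^+(\rho)}h_ne^{u_n/2}+O(\rho).
$$
By the local Gauss--Bonnet quantization, for $\rho$ in the range $[\delta_n\log\tfrac1{\delta_n},\,r]$ the right-hand side equals $-4\pi k+o(1)$, where $k\ge1$ is the number of enclosed bubbles and $\beta$ is the bubble mass \eqref{eq:beta}; in particular it stays $\le-2\pi<0$ throughout the neck. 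Integrating this flux in $\rho$ from the bubble scale out to $r$ then gives $\fint_{\partial B_p(\rho)}u_n\approx u_n(x_n)-4\log(\rho/\delta_n)=2\log\delta_n-4\log\rho\to-\infty$, which together with the bounded oscillation yields $u_n\to-\infty$ on $\partial B_p(r)\cap\S$. Alternatively, once the masses are bounded, the same conclusion follows from the concentration--compactness trichotomy of \cite{breme} in its boundary version (obtained by reflecting the Neumann problem across $\partial\S$, which turns the boundary term into a line density): since $u_n$ blows up at $p$, the bounded alternative is excluded and $u_n\to-\infty$ locally uniformly on $\overline{B_p(r)}\setminus\{p\}$.

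The genuine difficulty is the mass bound in the bounded-index case, i.e. converting a hypothetical divergence of the local mass into arbitrarily many \emph{independent} negative directions of $Q_n$. This requires separating the concentration scales (a bubble-tower analysis) so that the test functions of Theorem \ref{t:app-index} have disjoint supports, and ruling out that mass leaks into the necks without concentrating. Both are delicate precisely because, as stressed throughout the paper, there is no a priori control on the Dirichlet energy of $u_n$, so all integral estimates must pass through Lemma \ref{u^-}, and any Pohozaev balance needed for the neck must be run with the holomorphic dilation field $F(z)=z-p$, which is tangent to $\partial\S$ at $p$ and makes the gradient terms cancel exactly as in Proposition \ref{p:int-parts}. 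A secondary technical point is the boundary adaptation of the concentration theory invoked for the decay.
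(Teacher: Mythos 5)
Your overall architecture matches the paper's: isolate $p$ in $S$ (using the finiteness of $S_1$ from Lemma \ref{l:vol-fin-S1} and the continuity of $\mathfrak{D}$), extract finitely many index-one bubbles by the Li--Shafrir selection process, and then derive the mass bounds and the decay on $\partial B_p(r)$ from a flux balance. The bounded-volume half of your mass bound (via the integrated Gauss--Bonnet identity and the positivity of $h$ near $S$) is a fine shortcut. But the bounded-index half, which you yourself flag as ``the genuine difficulty,'' is not actually carried out, and the argument you substitute for it is circular. After the selection process terminates one only has the paper's bound {\bf (iii)}, i.e. $e^{u_n(x)}\le C\min_i|x-x^i_n|^{-2}$ in the neck; this still permits a logarithmically divergent neck mass, of order $\int_{\delta_n}^{r}\rho^{-2}\,\rho\,d\rho\sim\log(r/\delta_n)$, so it is simply not true that ``any divergent residual mass would itself concentrate and yield a further bubble.'' Likewise, your flux computation asserts that the right-hand side equals $-4\pi k+o(1)$ ``by the local Gauss--Bonnet quantization'' uniformly for $\rho$ in the neck range; but the quantization \eqref{quant} is proved in the paper only \emph{after} Lemma \ref{key} and presupposes exactly the local mass bound you are trying to establish. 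Assuming the neck mass is $o(1)$ in order to compute the flux, and then using the flux to bound the neck mass, is a loop.

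The paper closes this loop with two ingredients absent from your proposal: a Harnack inequality for the rescalings $w^i_n(x)=u_n(x^i_n+s^i_n x)+2\log s^i_n$ at every intermediate scale (Step 3), which converts {\bf (iii)} into control of $u_n$ by its spherical averages, and a ``first radius of failure'' bootstrap (Steps 4--5): one \emph{defines} $t^n_i$ as the first radius at which the improved decay $\overline{u}^i_n(s)\le\overline{u}^i_n(R^i_n\delta^i_n)-(4-\delta)\log\bigl(s/(R^i_n\delta^i_n)\bigr)$ fails; up to that radius the decay itself forces the neck contribution in \eqref{eq:int-parts-bdry} to be negligible, hence the flux at $t^n_i$ is still $-4\pi+o(1)$, contradicting the failure. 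It is precisely the rate $-(4-\delta)\log s$, rather than the $-2\log s$ furnished by {\bf (iii)}, that makes $\int e^{u_n}$ and $\oint e^{u_n/2}$ summable across all scales and yields $u_n\to-\infty$ on $\partial B_p(r)\cap\Sigma$. Your alternative route via a reflected Brezis--Merle trichotomy suffers from the same defect: it requires the $L^1$ mass bound as input, and the nonlinear Neumann datum $2h_ne^{u_n/2}$ does not reflect into a setting covered by \cite{breme}. So the skeleton is right and the difficulty is correctly located, but the decisive neck estimate is missing.
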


\begin{proof} 
	We will follow a modification of  the strategy in \cite{li-sha}, and hence we will be  sketchy in some parts. 
	First, we can  conformally deform a geodesic ball centred at $p$ into a planar half-ball
	$B_0^+(r_0)$, reducing ourselves to the same equation as in \eqref{anillo}. 
	It is then convenient to divide the proof into several steps. 
	
	%
	%
	%
	%
	%
	%
	
	\
	
	\noindent {\bf Step 1: blowing a first bubble.}  We already know from Lemma \ref{l:vol-fin-S1} that 
	the subset $S_1$ in the blow-up set $S$  is finite and therefore, by sub-harmonicity, for $n$ large (and up to a subsequence) $u_n$ has 
	local maxima  $x_n$ on $\Gamma^+_0(r)$, converging to $p$.

	We rescale the solutions $u_n$ as follows 
	$$
	v_n(x) = u_n(x_n + \d_n x) + 2 \log \d_n; \qquad \qquad e^{-2 \d_n} = u_n(x_n). 
	$$
	Reasoning as in the proof of Proposition \ref{blowanal}, 
	we can find $R_n \to + \infty$ (slowly) 
	such that $\|v_n - v\|_{C^2(B_0^+(2 R_n))} \to 0$, with  
	$v$ a solution of \eqref{ecualimit1}. By the classification results given in Section 4, $v$ must be of the form 
	$$
	v(s,t) = 2 \log \left(  \frac{2 \l}{- \l ^2 + (s-s_0)^2 + (t+t_0)^2} \right)
	$$
	for some $\l > 0$, $s_0 \in \R$ and $t_0$ determined by $t_0 = \mathfrak{D}(p) \, \l $. Notice that, at infinity 
	\begin{equation}\label{eq:asy-ovv}
	v(s,t) \simeq - 4 \log \left| (s,t) - (s_0, -t_0) \right|. 
	\end{equation}
	We also recall that this solution has Morse index one, by Theorem \ref{t:app-index}.

	\
	
	\noindent {\bf Step 2: blowing (possibly) other bubbles.} Letting $r$ be as in the previous step and 
	$R_n \to + \infty$ as in Step 1, we consider the  maximization problem 
	$$
	\sup_{R_n \d_n \leq |x-x_n| \leq r/2} \left( u_n + 2 \log |x-x_n| \right). 
	$$
	If this supremum tends to infinity, reasoning as in the proof of Lemma 4 in \cite{li-sha}, 
	one could again rescale $u_n$  
	near a maximum point of the function $u_n + 2 \log |x-x_n|$ to obtain 
	another limiting profile. Since all limiting profiles have the expression 
	of the above function $v$ and have Morse index equal to one and fixed mass, \eqref{cond} implies that continuing the procedure 
	there is a finite number of bubbles. Therefore, we can find an integer $k$ for $1 \leq k \leq m$, 
	sequences of boundary points $x_n^i \in B_0^+(r)$, $i = 1, \dots, k$, sequences $(\d_n^i)_n$, $\d_n^i \to 0$ for $i = 1, \dots, k$
	sequences $(R_n^i)_n$, $R_n^i \to \infty$, $i = 1, \dots, k$ and a fixed constant $C$ with the following properties

	\begin{description}
		\item[(i)] $v_n^i(x) := u_n(x^i_n + \d^i_n x)+ 2 \log \d_n^i \to v$ \quad in $C^{2}(B_0^+(R^i_n))$; 
		
		\medskip
		
		\item[(ii)] $B^+_{x^i_n}(4 R^i_n \d^i_n) \cap B^+_{x^j_n}(4 R^j_n \d^j_n) = \emptyset$ \quad for $i \neq j$; 
		
		\medskip
		
		\item[(iii)] $
		\sup_{B^+_{0}(r/2) \setminus \cup_{i=1}^k B^+_{x^i_n}(R^i_n \d^i_n)} \left( u_n + 2 \log \min_i |x-x^i_n| \right) \leq C$.
	\end{description}

	\
	
	\noindent {\bf Step 3: Harnack inequality.} For $s^i_n \in [2 R^i_n \d ^i_n, 1/8 \min_{j \neq i} 
	|x^i_n - x^j_n|]$, $s^i_n \leq \frac{r}{8}$, one can consider the rescaled function 
	$w^i_n$ given by 
	$$
	w^i_n(x) = u_n(x^i_n+s^i_n x) + 2 \log s^i_n; \qquad \qquad x \in B^+_0(4) \setminus B^+_0(1/2). 
	$$
	By the above bound {\bf (iii)} on $u^i_n$, $w^i_n$ is uniformly controlled from above, and 
	satisfies an equation with uniformly  bounded data in $B^+_0(4) \setminus B^+_0(1/2)$. 
	Reasoning as for \eqref{C2conver}, we can deduce 
    a Harnack inequality for $e^{w^i_n}$ in 
	$B^+_0(2) \setminus B^+_0(1)$, i.e., $w_n$ has uniformly  bounded oscillation there. 
	Define  the (semi-circular) average 
	$$
	\overline{u}^i_n(s) := \frac{1}{|\partial^+ B_{{x}^i_n}(s)|} \oint_{\partial^+ B_{{x}^i_n}(s)} u_n. 
	$$
	The Harnack inequality implies that there exists a fixed $C > 0$ such that 
	$$
	u_n(x) \leq \overline{u}^i_n(s)  + C, \qquad x \in B^+_{{x}^i_n}(2 s) \setminus B^+_{{x}^i_n}(s/2), 
	$$  
	provided that $s \in [2 R^i_n \d ^i_n, 1/8 \min_{j \neq i}   |x^i_n - x^j_n|]$.

	\
	
	\noindent {\bf Step 4: local radial decay.}
	Let us assume first that there exists $x^j_n \neq x^i_n$ as in Step 2 such that $|x^i_n - x^j_n| \to 0$. 
	We will show that the functions $u_n$ (properly rescaled) keep the profile as in 
	\eqref{eq:asy-ovv} for $|x-x^i_n| \leq O (\min_{j \neq i}   |x^i_n - x^j_n|)$. 
	This will imply in particular the finiteness of accumulation of  volume up 
	to that scale.

	If  $\nu_s$ stands for the  outer unit normal to $\partial^+ B_{{x}^i_n}(s)$, one has the formula 

	\begin{equation}\label{eq:der-average-2}
	|\partial^+ B_{{x}^i_n}(s)| \frac{d}{ds} \overline{u}^i_n(s) =  \oint_{\partial^+ B_{{x}^i_n}(s)} 
	\frac{\partial u_n}{\partial \nu_s}. 
	\end{equation}

	Fix now a number $\d$ small and positive: we claim that 
	\begin{equation}\label{eq:claim-blow-up}
	\overline{u}^i_n(s)  \leq  \overline{u}^i_n(R^i_n \d ^i_n)  - (4-\d) \left( \log s - \log (R^i_n \d ^i_n) \right) 
	\end{equation}
	for all $s \in [2 R^i_n \d ^i_n, 1/8 \min_{j \neq i} 
	|x^i_n- x^j_n|]$. Assuming by contradiction that this is false, 
	define  $t_i^n$ to be the infimum of the radii $s$ such that 
	$$
	\overline{u}^i_n(s)  >  \overline{u}^i_n(R^i_n \d ^i_n)  - (4-\d) \left( \log s - \log (R^i_n \d ^i_n) \right). 
	$$
	Notice that at this first value $t^i_n$ we must also have
	$$
	t_i^n \frac{d}{ds}|_{s=t^i_n} \overline{u}^i_n(s) \geq \d - 4. 
	$$
	\eqref{eq:der-average-2},  the fact that $ |\partial^+ B_{{x}^i_n}(s)| = 
	\pi \, s$ and the latter formula imply 
	$$
	\oint_{\partial^+ B_{{x}^i_n}( t_i^n)} 
	\frac{\partial u_n}{\partial \nu_{t_i^n}} \geq (\d - 4) \pi (1 + O((t_i^n)^2)).  
	$$
	On the other hand, from the convergence in Step 1 and the limiting 
	behavior in \eqref{eq:asy-ovv} we find that 
	$$
	\oint_{\partial^+ B_{{x}^i_n }( R^i_n \d ^i_n)} 
	\frac{\partial u_n}{\partial \nu_{R^i_n \d ^i_n}} \to - 4 \pi 
	\quad \hbox{ as } n \to + \infty. 
	$$
	Integrating \eqref{ecua-compact} and using the last two formulas, together with the 
	fact that $t_i \to 0$, we obtain 
	\begin{eqnarray}\label{eq:int-parts-bdry} \nonumber 
	2 \int_{B^+_{{x}^i_n}( t_i^n) \setminus B^+_{{x}^i_n }( R^i_n \d ^i_n)} (K_n e^{u_n} - \tilde{K}_n u_n) 
	+ 2 \oint_{\Gamma^+_{{x}^i_n}( t_i^n) \setminus \Gamma^+_{{x}^i_n }( R^i_n \d ^i_n)} (h_n e^{u_n/2} - \tilde{h}_n u_n) \\ 
	\leq - \d \pi 
	+ o_n(1). 
	\end{eqnarray}
	
	However, by the definition of $t_i$ and by the Harnack inequality,
	in the above regions we have  that 
	\begin{equation}\label{eq:ineq-exp}
	e^{u_n}(x) \leq C (\d ^i_n)^{-2} (R^i_n)^{-4} \left( \frac{R^i_n \d ^i_n}{|x|} \right)^{4-\d} \qquad 
	|x| \in  [ R^i_n \d ^i_n, t_i^n ]
	\end{equation}
	and moreover, by the asymptotics of $u_n$ 
	$$
	|u(x) - u( \d^i_n R^i_n /|x| \, x)| \leq C \log \frac{|x|}{\d^i_n R^i_n}; \qquad 
	|x| \in  [ R^i_n \d ^i_n, t_i^n ]. 
	$$
	The function  $u_n$, where it is positive,  can be estimated from above by its exponential. Where it is negative, 
	the previous formula allows to estimate it in absolute value by $C \log |x|$.  
	The last two formulas and the latter argument imply that the integrals on the
	l.h.s. of \eqref{eq:int-parts-bdry} converge to zero, giving a contradiction. 
	
	Assuming next that there is no $x^j_n \neq x^i_n$ as above, i.e. that there is 
	only one bubble, we choose the upper bound for $t_i$ (in the formula after \eqref{eq:claim-blow-up}) 
	to be a small but fixed number 
	$\tilde{r} > 0$. In this way, all the above arguments hold true, 
	with the exception that we need to replace the r.h.s. of \eqref{eq:int-parts-bdry} 
	by $- \d \pi + O(\tilde{r}^2)  + o_n(1)$. It is then sufficient to choose $\tilde{r}$ 
	small compared to $\d$ to reach again a contradiction.

	\
	
	\noindent {\bf Step 5: global radial decay.} This step can be skipped if in the 
	previous one only one bubble appears. If we have more bubbles instead, 
	we assume for simplicity that there are only two of them: $x^1_n$ and $x^2_n$. The general case 
	can be dealt with in a similar way, properly grouping points in clusters.

	By the previous step, fixing any small $\d >0$, we have the inequality 
	\eqref{eq:claim-blow-up}, and similarly 
	$$
	\overline{u}^2_n(s)  \leq  \overline{u}^2_n(R^2_n \d^2_n)  - (4-\d) \left( \log s - \log (R^2_n \d^2_n) \right), 
	$$
	for all $s \in [2 R^2_n \d^2_n, 1/8  |x^1_n- x^2_n|]$.

	By the Harnack inequality in Step 3 we have that the differences of the radial averages 
	$|\overline{u}^2_n(1/8  |x^1_n- x^2_n|) 
	- \overline{u}^1_n(1/8  |x^1_n- x^2_n|) |$ are uniformly bounded and  that moreover 
	(using also \eqref{eq:ineq-exp}) 
	$$
	|u_n(x)  - \overline{u}^1_n(1/8  |x^1_n- x^2_n|) | \leq C 
	\quad \hbox{ for evey } x \in \Omega_n; 
	$$
	$$
	\int_{\Omega_n} e^{u_n} \to 0; \qquad \oint_{\Xi_n} e^{u_n/2} \to 0, 
	$$
	where 
	$$
	\Omega_n = B^+_{x^1_n}( 8  |x^1_n- x^2_n|) \setminus \left( 
	B^+_{x^1_n }(1/8  |x^1_n-x^2_n|)  \cup B^+_{x^2_n}(1/8  |x^1_n- x^2_n|) \right),
	$$
	$$
		\Xi_n = \G^+_{x^1_n}( 8  |x^1_n- x^2_n|) \setminus \left( 
		\G^+_{x^1_n }(1/8  |x^1_n-x^2_n|)  \cup \G^+_{x^2_n}(1/8  |x^1_n- x^2_n|) \right). 
		$$
Integrating 
	\eqref{ecua-compact} and reasoning as in the previous step it follows that 
	$$
	\oint_{\partial^+ B_{x^1_n}(8 |x^1_n - x^2_n|)} 
	\frac{\partial u_n}{\partial \nu_n} \to - 8 \pi 
	\quad \hbox{ as } n \to + \infty, 
	$$
	since the total flux of the gradient over the boundary of $\Omega_n$ tends to zero. 
	One can then repeat the reasoning of Step 4 to show that 
	$$
	\overline{u}^1_n(s)  \leq  \overline{u}^1_n(8  |x^1_n- x^2_n|)  - 
	2 (4-\d) \left( \log s - \log (8  |x^1_n- x^2_n|) \right) + C 
	$$
	for all $s \in [8  |x^1_n- x^2_n|, \tilde{r}]$, where $C$ is a 
	fixed positive constant and $\tilde{r}$ is a small, but fixed, positive constant. 
	From these facts, one can pass to the next (and last) step. 
	Notice that these latter estimates imply that in the above set $\Omega_n$ 
	the integral of $e^{u_n}$ tends to zero (as well as the boundary integral of 
	$e^{u_n/2}$).

	\
	
	\noindent {\bf Step 6: conclusion.}  From Step 4 and Step 5 it follows that the integral 
	of $e^{u_n}$ (respectively, the boundary integral of $e^{u_n/2}$) are bounded in a fixed neighborhood of $p$, which is the first conclusion of the proposition. It readily follows also that $u_n$ diverges negatively on $\partial^+B_p(r)$.

\end{proof}

\begin{lemma}
	Under the conditions of Proposition \ref{p:6.3}, the asymptotic behavior given in \eqref{eq:profile-6} and \eqref{profile} hold. If moreover $\chi_n \leq 0$, then $S_1$ is empty.
\end{lemma}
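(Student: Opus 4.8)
The plan is to handle the two asymptotic profiles and the statement $S_1=\emptyset$ separately, the latter being the substantial point. For the profile \eqref{profile} at a point $p\in S_1$, I would start from Lemma \ref{key}, which provides a fixed half-ball $B_p^+(r)$ on which $\int e^{u_n}$ and $\oint e^{u_n/2}$ stay bounded and on whose outer arc $u_n\to-\infty$. This is exactly the finite-mass, isolated setting of \cite{li-sha}, so the bubbling procedure of Steps 1--6 in the proof of Lemma \ref{key}, together with the classification \eqref{bubble}, yields a single limiting bubble and the expansion \eqref{profile}. The gradient estimate \eqref{est1} then follows from the far-field behavior \eqref{eq:asy-ovv}: outside the core $B_p^+(\lambda_n\log\frac1{\lambda_n})$ the solution is already in its $-4\log|x-p|$ regime, so differentiating the expansion (equivalently, reading off the flux $-4\pi$ through $\partial^+B_p(s)$) gives the claimed decay. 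For \eqref{eq:profile-6} at $p\in S_0$, the rescaling of Proposition \ref{blowanal} produces $C^2_{loc}$ convergence of $v_n$ to a solution of \eqref{ecualimit1}; since $\mathfrak D(p)=1$ on $S_0$, the classification forces the limit to be the $1$-D profile \eqref{profile2}, Remark \ref{bordo} places the rescaling centers at the boundary faster than the bubble scale (the condition $d(x_n,\partial\Sigma)=o(\lambda_n)$), and propagating the convergence up to the slowly growing scale $R_n\delta_n$ via the Harnack control of Lemma A.2 in \cite{JostWZZ} yields \eqref{eq:profile-6}.

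For $S_1=\emptyset$ when $\chi_n\le 0$, I would work with the global identity obtained by integrating \eqref{ecua-compact},
\begin{equation*}
\chi_n = \oint_{\partial\Sigma}h_n e^{u_n/2} - \int_\Sigma |K_n|e^{u_n}.
\end{equation*}
First I would dispose of the bounded-mass alternative: there $S_0=\emptyset$ by Lemma \ref{minimalmass}, each bubble of $S_1$ carries masses $\int|K_n|e^{u_n}\to\beta_i$ and $\oint h_ne^{u_n/2}\to\beta_i+2\pi$ by \eqref{eq:beta}, while away from $S$ both integrals vanish; hence $\chi_n\to 2\pi|S_1|$, incompatible with $\chi_n\le 0$ unless $S_1$, and then all of $S$, is empty. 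Consequently, under $\chi_n\le 0$ any blow-up must be of unbounded-mass type, and the issue reduces to excluding bubbles that coexist with unbounded-mass concentration.

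To do this I would localize the identity. Around each $p\in S_1$ the half-ball of Lemma \ref{key} contributes $+2\pi$ to the right-hand side (the $\beta_i$ cancel, leaving the topological $2\pi$; equivalently $-\tfrac12\oint_{\partial^+B_p(r)}\partial_n u_n\to 2\pi$, since $u_n\to-\infty$ on the arc isolates the bubble). On the complementary region the only concentration is the $1$-D profile \eqref{eq:profile-6} at $S_0\subset\{\mathfrak D=1\}$, and for that profile the area density $|K_0|e^v$ and the boundary density $h_0e^{v/2}$ have equal total mass, so the diverging parts of $\oint h_ne^{u_n/2}$ and $\int|K_n|e^{u_n}$ should cancel to leading order. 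Granting that the complement contributes $\ge -o(1)$, one gets $\chi_n\ge 2\pi|S_1|+o(1)$, forcing $S_1=\emptyset$.

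The hard part will be precisely this complement estimate. Because the mass diverges and, as emphasized in Lemma \ref{u^-} and Proposition \ref{p:int-parts}, we have no control on the Dirichlet energy, the balance $\oint h_ne^{u_n/2}=\int|K_n|e^{u_n}+o(1)$ over the $1$-D region must be extracted carefully from the uniform profile \eqref{eq:profile-6} on $B_{x_n}(R_n\delta_n)$ together with the decay estimates of Lemma \ref{u^-} and Lemma \ref{key}. In particular one must rule out any residual \emph{negative} contribution coming from the transition annuli between the bubble and the concentration region and from the genuinely regular part of $\Sigma$, where $u_n$ need not tend to $-\infty$; controlling the sign of that remainder, rather than the bubble count itself, is where the assumption $\chi_n\le 0$ has to be used decisively.
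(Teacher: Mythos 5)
Your treatment of the two profiles is broadly in the spirit of the paper, but it glosses over the one genuinely nontrivial step there: Lemma \ref{key} alone does \emph{not} give ``a single limiting bubble'' at $p\in S_1$ --- its Steps 1--6 explicitly allow a cluster of $k\geq 1$ bubbles inside $B_p^+(r)$, and the whole point of the present lemma for $S_1$ is to prove the quantization $\tilde m=1$. The paper does this by first establishing $|K_n|e^{u_n}\weakto \tilde m\beta\delta_p$, $h_ne^{u_n/2}\weakto \tilde m(\beta+2\pi)\delta_p$, then deriving $\nabla u_n\to -4\tilde m\,x/|x|^2+\nabla\phi$ from a local Green representation, and finally feeding this into the Pohozaev identity of Lemma \ref{lema4} with $F(x)=x$ on $B_0^+(r)$ and letting $r\to 0$. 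Without that step \eqref{profile} and \eqref{est1} (whose coefficient $-4$ rather than $-4\tilde m$ is exactly the content of $\tilde m=1$) are not established.

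The more serious problem is your argument for $S_1=\emptyset$ when $\chi_n\le 0$. Your bounded-mass reduction is fine, but in the unbounded-mass case the localized Gauss--Bonnet balance does not close, and you say so yourself: you need the complement of the bubble regions to contribute $\ge -o(1)$ to $\oint_{\partial\Sigma}h_ne^{u_n/2}-\int_\Sigma|K_n|e^{u_n}$, and there is no independent handle on the sign of that remainder --- by the very identity you are using, the complement's contribution \emph{equals} $\chi_n-2\pi|S_1|+o(1)$, so the argument is circular. The leading-order cancellation of the $1$-D profile only controls the region $|s-s_n|\le R_n\delta_n$ where \eqref{eq:profile-6} holds, not the transition annuli or the regular part of $\Sigma$, and the individual masses there diverge while their difference is exactly the $O(1)$ quantity whose sign is at issue. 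The paper takes a completely different and self-contained route: from \eqref{profile} and \eqref{est1} one gets
\begin{equation*}
\int_{B^+_{0}(r)}|\nabla u^-_n|^2 \;\geq\; 16\int_{B^+_{0}(r)\setminus B_{0}^+(\delta_n\log\delta_n)}\frac{1+o(1)}{|x|^2}\;\longrightarrow\;+\infty,
\end{equation*}
which contradicts Lemma \ref{u^-}\,\emph{b)} --- and that lemma, proved by testing the equation against $u_n^-$, is precisely where the hypothesis $\chi_n\le 0$ enters (it forces $-2\chi_n\fint_\Sigma u_n^-\le 0$ and hence $\int_\Sigma|\nabla u_n^-|^2=O(1)$). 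You should replace your mass-balance scheme by this energy argument, or supply the missing sign control, which as stated is an unfilled gap.
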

	
\begin{proof}
	
	If $p \in S_0$, the statement follows from Case 1 in the proof of Theorem \ref{Ekel}. Recall that if $\mathfrak{D}(p)=1$, the only solution of \eqref{ecualimit1} is given by \eqref{profile2} (see Section 4). In this regard, take also into account Remark \ref{bordo}.  
	
	If $p \in S_1$, our main concern is to rule out the blow-up cluster phenomena.  Via a conformal map, we can work 
	in $B^+_0(r) \subset \R^2$, where $p$ is mapped to the origin. One has the distributional convergence 
		\begin{equation}\label{quant}
		(|K_n|e^{u_n})|_{B_p(r) \cap \S} \weakto \tilde m \beta \delta_{p} \quad \mbox{ and } \quad h_ne^{u_n/2}|_{\partial B_p(r) \cap \S} \weakto \tilde m (\beta+2\pi) \delta_{p},
		\end{equation}
where $\tilde m \in \mathbb{N}$ and
		
	\begin{equation}\label{eq:beta_i}
	\beta=2\pi \left (  \frac{h(p)}{\sqrt{h^2(p)+K(p)}} - 1 \right )
	\end{equation}

		In order to derive the result, we use a Pohozaev identity. Take $r>0$ such that $B^+_{0}(r)\cap S=\{0\}$ and apply Lemma~\ref{lema4} on $B_0^+(r)$ with $F(x)=x$ to obtain
		
		\begin{align*}
		\oint_{\partial B_0^+(r)} [4 K_n e^{u_n} ( x\cdot \nu) + 2 (\nabla u_n \cdot x)(\nabla u_n \cdot \nu) - |\nabla u_n|^2 x \cdot \nu] \\ 
		= \int_{B_0^+(r)} [ 4 \tilde{K}_n \nabla u_n \cdot x + 8  K_n e^{u_n} + (\nabla K_n \cdot x)e^{u_n}]. \nonumber
		\end{align*}
		
		Now, we split the boundary integrals into $\Gamma_0^+(r)$ and $\partial^+ B_0(r)$, to find:
		
		\begin{align} \label{quant0}
		\oint_{\Gamma_0^+(r)} \left[ 4 h_n e^{u_n/2} (\nabla u_n \cdot x) - 4 \tilde{h}_n (\nabla u_n \cdot x)   \right] +   \\  r \oint_{\partial^+ B_0(r)} \left[4 K_n e^{u_n}  + 2 \left(\frac{\nabla u_n \cdot x}{r}\right)^2  - |\nabla u_n|^2\right]  \nonumber \\ = \int_{B_0^+(r)} \left[4 \tilde{K}_n \nabla u_n \cdot x + 8 K_n e^{u_n} + (\nabla K_n \cdot x)e^{u_n}\right]. 	\nonumber \end{align}

		Taking into account that $|\nabla K_n|\leq C$ in $B_0^+(r)$, then
		
		\beq\label{quant1}
		\int_{B_0^+(r)} (\nabla K_n \cdot x)e^{u_n} \leq r \int_{B_0^+(r)} |\nabla K_n| e^{u_n} = O(r).
		\eeq
		
		On the other hand, integrating by parts
		
		$$
		\oint_{\Gamma_0^+(r)}4 h_n e^{u_n/2} (\nabla u_n \cdot x) = \Big[ 8h_n e^{u_n/2} x_1 \Big]_{-r}^r -8\oint_{\Gamma_0^+(r)} \nabla h_n \cdot x \, e^{u_n/2} - 8\oint_{\Gamma_0^+(r)} h_n e^{u_n/2}.
		$$
		Again, since $|\nabla h_n | \leq C$, we have
		
		\beq\label{quant2}
		\oint_{\Gamma_0^+(r)} \nabla h_n \cdot x \, e^{u_n/2} = O(r).
		\eeq
		
		Moreover, by Lemma \ref{key} one deduces
		
		\beq\label{quant3}
		\Big[ h_n e^{u_n/2} x_1 \Big]_{-r}^r \to 0, \quad \int_{\partial^+B_0(r)} K_ne^{u_n} \to 0, \quad \mbox{ as $n\to+\infty$}.
		\eeq
		
		Next, it is needed to estimate the gradient terms. In order to do it, we introduce the function $\displaystyle{v_n=u_n-\inf_{\partial^+ B_0(r)}u_n-w_n}$, where $v_n,w_n$ satisfy

		$$
		\left\{\begin{array}{ll}
		\displaystyle{-\Delta v_n + 2 \tilde{K}_n = 2 K_n e^{u_n}},  \qquad & \text{in $B_0^+(r)$,}\\
		\displaystyle{\frac{\partial v_n}{\partial n} +  2 \tilde{h}_n =2h_n e^{u_n/2}}, \qquad  &\text{on $\Gamma_0^+(r)$,} \\
		\displaystyle{v_n=0}, \qquad  &\text{on $\partial^+ B_0(r)$,}
		\end{array}\right.
		$$

		$$
		\left\{\begin{array}{ll}
		\displaystyle{\Delta w_n = 0},  \qquad & \text{in $B_0^+(r)$,}\\
		\displaystyle{\frac{\partial w_n}{\partial n} =0}, \qquad  &\text{on $\Gamma_0^+(r)$,} \\
		\displaystyle{w_n=u_n-\inf_{\partial^+ B_0(r)}u_n}, \qquad  &\text{on $\partial^+ B_0(r)$.}
		\end{array}\right.
		$$
		%

		Recall the Green's representation formula for $v_n$
		
		\begin{eqnarray} \label{eq:Green-local} \nonumber
			v_n(x)&=&\frac{-1}{\pi}\int_{B_0^+(r)} \log|x-y| \, (2 K_n(y) e^{u_n(y)} -2\tilde{K}_n(y)) dy \\ & - & \frac{1}{\pi}\oint_{\Gamma_0^+(r)} \log|x-y| \, \left( 2 h_n(y) e^{u_n(y)/2} - 2\tilde{h}_n(y) \right ) dy  + R_n(x),
		\end{eqnarray}
		where $R_n$ is uniformly bounded.
		
		Let us define the function
		
		$$
		\varphi_n=u_n-\inf_{B_0^+(r)} u_n\geq 0.
		$$
		
		By Lemma \eqref{u^-}, {\em c)},  we know that $\varphi_n\leq C$ on $\partial^+ B_0(r)$, so using the Green's representation formula for $\varphi_n$ we obtain that 
		
		\begin{eqnarray*}
			\varphi_n(x)&=&\frac{-1}{\pi}\int_{B_0^+(r)} \log|x-y| (2 K_n(y) e^{u_n(y)} -2\tilde{K}_n(y)) \, dy \\ & - & \frac{1}{\pi}\oint_{\Gamma_0^+(r)} \log|x-y| (2 h_n(y) e^{u_n(y)/2}- 2\tilde{h}_n(y)) \, dy \,  + \, O(1).
		\end{eqnarray*}

		Therefore
		
		$$
		\varphi_n\to\varphi= -4 \tilde m \log|x| + \phi \qquad \mbox{uniformly in $C^2(B_0^+(r)\setminus\{0\})$},
		$$
		where $\phi$ is a regular function on $B_0^+(r)$. As a consequence, we obtain that
		
		\beq\label{grad1}
		\nabla u_n = \nabla \varphi_n \to -4 \tilde m \frac{x}{|x|^2} + \nabla \phi.
		\eeq
		
		Now, by the boundedness of $\tilde{K}_n$, $\tilde{h}_n$ and \eqref{grad1} one gets
		
		\beq\label{quant4}
		\int_{B_0^+(r)} \tilde K_n \nabla u_n \cdot x = O(r), \qquad \int_{\Gamma_0^+(r)} \tilde h_n \nabla u_n \cdot x = O(r).
		\eeq
		
		By \eqref{quant1}, \eqref{quant2}, \eqref{quant3}, \eqref{quant4} and \eqref{grad1}, we can pass to the limit in \eqref{quant0} as $r\to0$ to conclude that $\tilde m=1$.
		
\		

We are now concerned with pointwise estimates for the blow--up profile, namely \eqref{profile}. For this it suffices to follow the arguments for the proof of Theorem~1.6 in \cite{bao}, which exploits the Green's representation formula  and a Pohozaev type identity, and also to use \eqref{C2conver} and the first bound in Lemma \ref{u^-} {\em c)}. Taking the limit problem \eqref{ecualimit1} into account, for the function $v_n$ defined in \eqref{scalesolution} one gets

$$
v_n(s,t)= 2 \log\left( \frac{2\lambda_0}{s^2+(t+t_0)^2-\lambda^2_0} \right) + O(1), \qquad \mbox{ in $B_0^+(r/\delta_n)$},
$$
where $t_0=\mathfrak{D}(0)\lambda_0$ and, since $v(0)=0$, $\lambda_0=\frac{2}{\mathfrak{D}^2(0)-1}$. In this way, taking into account Remark~\ref{bordo} and that $x_n \in \Gamma_0^+(r)$, the previous profile estimates can be recasted for $u_n$ to obtain \eqref{profile}, where $\lambda_n=\lambda_0\delta_n$. The gradient estimate \eqref{est1} follows from similar arguments.	
	
\		

Finally, as a consequence of \eqref{profile} and \eqref{est1}, we obtain that
	\begin{eqnarray*}
		\int_{B^+_{0}(r)}|\nabla u^-_n|^2 & \geq & 16 \int_{B^+_{0}(r)\setminus B_{0}^+(\delta_n\log\delta_n)} \frac{1}{|x|^2} + o\left(\frac{1}{|x|^2}\right) \\ & = & -4\log(\delta_n\log\delta_n) + o(\log(\delta_n\log\delta_n)).
	\end{eqnarray*}
	where $\delta_n$ is defined in \eqref{deltaeps}, so
	
	\beq\label{negativegradi}
	\int_{B^+_{0}(r)}|\nabla u^-_n|^2 \to +\infty.
	\eeq
	
	Obviously, \eqref{negativegradi} contradicts property {\em b)} in Lemma \ref{u^-} and we conclude the proof.

\end{proof}

We finish the proof of Proposition \ref{p:6.3} with the following two lemmas:

\begin{lemma}\label{l:D-final}
	For every $p \in S_0$, $\mathfrak{D}_\tau(p) = 0$. 
\end{lemma}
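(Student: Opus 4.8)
The plan is to localize the Pohozaev argument of Proposition \ref{p:int-parts} around a fixed point $p \in S_0$. First I would observe that if $\int_\Sigma e^{u_n}$ is bounded, then $S_0 = \emptyset$ by Lemma \ref{minimalmass} and there is nothing to prove; hence I may assume $\int_\Sigma e^{u_n} \to +\infty$. The key preliminary fact is that the mass accumulated near $p$ diverges: with the rescaling \eqref{scalesolution} one has $v_n \to v$ in $C^2_{loc}$, where $v$ is the one-dimensional profile \eqref{profile2} associated with $\mathfrak{D}(p) = 1$, whose boundary trace $e^{v(\cdot,0)/2}$ is a positive constant. Since the boundary integral of $h_n e^{u_n/2}$ over $B_{x_n}(R_n\delta_n) \cap \partial\Sigma$ coincides with that of $h_n e^{v_n/2}$ over the corresponding segment of $\partial\mathbb{R}^2_+$, it is of order $R_n \to +\infty$. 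Setting $\rho_n := \oint_{B_p(r) \cap \partial\Sigma} h_n e^{u_n/2} \to +\infty$, a local version of Proposition \ref{lema3} (with test functions supported in $B_p(r)$, the artificial boundary $\partial^+ B_p(r)$ being controlled by Lemma \ref{u^-} c)) gives $\rho_n^{-1}|K_n| e^{u_n} \weakto \xi^{(p)}$ and $\rho_n^{-1} h_n e^{u_n/2} \weakto \sigma^{(p)}$ with $\xi^{(p)}|_{int(\Sigma)} = 0$, $\xi^{(p)}|_{\partial\Sigma} = \sigma^{(p)}$, and $p \in \supp \sigma^{(p)}$.

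With these ingredients I would repeat the computation of Proposition \ref{p:int-parts}: after passing to isothermal coordinates sending $p$ to the origin and absorbing the conformal factor as in \eqref{anillo}, take an arbitrary real analytic $f$ near $p$, form the holomorphic tangential field $F = f \tau$, and apply Lemma \ref{lema4} on $B_p(r)$. The Dirichlet-energy terms $2 DF(\nabla u_n, \nabla u_n) - (\nabla \cdot F)|\nabla u_n|^2$ vanish by the Cauchy--Riemann equations; the contributions of $u_n^-$ are $O(1)$ by Lemma \ref{u^-}, and those of $u_n^+$ are $o(\rho_n)$ after an integration by parts. Dividing by $\rho_n$ and passing to the limit, the $f_\tau$ terms cancel and one is left, exactly as there, with $\oint (2 h_\tau/h - K_\tau/K)\, f \, d\sigma^{(p)} = 0$ for every analytic $f$. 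Since $2 h_\tau/h - K_\tau/K = 2 \mathfrak{D}_\tau / \mathfrak{D}$ and analytic functions are dense among continuous ones, this forces $\mathfrak{D}_\tau = 0$ on $\supp \sigma^{(p)}$, and in particular $\mathfrak{D}_\tau(p) = 0$.

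The step I expect to be most delicate is justifying that $p \in \supp \sigma^{(p)}$, i.e. that the diverging local mass is genuinely felt at $p$ and not washed out by concentration elsewhere in $B_p(r)$. This is precisely why I normalize by the \emph{local} mass $\rho_n$ rather than by the global one used in Section 6: the global measure $\sigma$ may fail to detect those points of $S_0$ at which mass concentrates more slowly. Once the concentration at $p$ is secured through the profile \eqref{eq:profile-6}, the rest is a routine transcription of Proposition \ref{p:int-parts}.
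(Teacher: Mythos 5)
Your overall strategy (localize the Pohozaev identity of Proposition \ref{p:int-parts}, normalize by the local mass, and read off $\mathfrak{D}_\tau=0$ on the support of the local limit measure) is the same as the paper's, and your reduction to the unbounded-mass case via Lemma \ref{minimalmass} is correct. The gap is exactly at the step you yourself flag as delicate: nothing in your argument actually establishes $p\in\supp\sigma^{(p)}$. The convergence $v_n\to v$ to the one-dimensional profile \eqref{profile2} only shows that the mass on $B_{x_n}(R_n\delta_n)\cap\partial\Sigma$ diverges; it does not show that this mass is a positive fraction of $\rho_n=\oint_{B_p(r)\cap\partial\Sigma}h_ne^{u_n/2}$. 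Since $S_0$ need not be finite and may accumulate at $p$ (recall the examples of Subsection 2.2, where $S$ is a whole boundary component), other points of $S_0\cap B_p(r)$ could carry asymptotically all of the local mass, in which case $\sigma^{(p)}$ charges nothing near $p$ and the limiting identity $\oint\bigl(2h_\tau/h-K_\tau/K\bigr)f\,d\sigma^{(p)}=0$ says nothing about $\mathfrak{D}_\tau(p)$. Shrinking $r$ does not help, precisely because $S_0$ may accumulate at $p$.

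The paper closes this gap by arguing by contradiction: assume $\mathfrak{D}_\tau(p)\neq 0$. Since $S_0\subset\{\mathfrak{D}=1\}$ and $\mathfrak{D}-1$ then has an isolated zero at $p$ along $\partial\Sigma$, the point $p$ is isolated in $S_0$; as $S_1$ is finite, $p$ is isolated in $S$. Choosing $r$ so that $B_p(r)\cap S=\{p\}$, Lemma \ref{u^-} \emph{c)} bounds $u_n$ on $\overline{B_p(r)}\setminus B_p(\e)$, so the locally normalized measure is exactly $\delta_p$ and the localized Pohozaev computation (the paper uses the single cut-off field $F=(1,0)$, which suffices since the limit measure is a Dirac mass, whereas your family $F=f\tau$ is only needed when the support is not a single point) yields $\mathfrak{D}_\tau(p)=0$, the desired contradiction. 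If you prepend this contradiction set-up, your argument becomes complete and essentially coincides with the paper's.
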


\begin{proof} If  $\mathfrak{D}_\tau(p) \neq 0$, with  $p \in S_0$, then $p$ is isolated in $S_0$. Recalling 
	that $S_1$ is finite by the previous lemma, $p$ is also isolated in $S$. 	By a conformal map we can pass to a problem in $B_p^+(r) \subset \R^2$, where $p$ is the unique singular point.
	
	Notice also that, at points in 
	$S_0$, the limit profiles of blowing-up solutions are given by formula \eqref{profile2}, and therefore the local accumulation of local volume must tend to $+ \infty$. This also implies that $\rho_n(u_n) \to + \infty$.

	We choose now $F(s,t)= (1,0)$ in Lemma \ref{lema4} and cut it off in a neighborhood of $p$. We can then continue with the proof of Proposition \ref{p:int-parts}, replacing the {\em global mass} $\rho_n$ with a {\em localized version}, namely $\int_{B^+_{p}(r)} e^{\hat{u}_n}$ for some small but fixed $r$. Take also into account Lemma \ref{u^-}, {\em c)}. In this way we still obtain $\mathfrak{D}_\tau(p) = 0$, as desired.

\end{proof}


\

\section{Appendix: test functions}
\setcounter{equation}{0}

Consider a point $p$  located on $\partial \Sigma$. Let $q=p+q_2 n(p)$, where $n$ is the outward normal vector to $\partial \Sigma$ and $q_2>0$ is small enough such that $q$ belongs to a regular extension of $\Sigma$. Given a parameter $\mu$ such that $\mu d(x,q)>1$ for every $x\in \Sigma$, we define the functions

$$
\varphi_{\mu,p}:\Sigma\to\R\qquad \varphi_{\mu,p}(x)=\log\frac{4\mu^2}{(\mu^2 d^2(x,q)-1)^2},
$$

\begin{equation}\label{testfunction}
\tilde{\varphi}_{\mu,p}:\Sigma\to\R\qquad \tilde{\varphi}_{\mu,p}(x)=\varphi_{\mu,p}-\log|K(x)|. 
\end{equation}

\begin{lemma}\label{l:test}
Let $p\in\partial\Sigma$ such that $\mathfrak{D}(p)>1$, let $I$ be as in \eqref{functional}, and let the function $\tilde{\varphi}_{\mu,p}$ be defined in \eqref{testfunction}. Then

$$
I(\tilde{\varphi}_{\mu,p})\to-\infty, \quad \oint_{\partial \Sigma} e^{\frac{\tilde{\varphi}_{\mu,p}}{2}} \to + \infty \quad \mbox{ as } \quad \mu\to \frac{1}{d(p,q)}=\frac{1}{q_2}.
$$
\end{lemma}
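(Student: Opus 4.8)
The plan is to evaluate $I$ on $\tilde{\varphi}_{\mu,p}$ by a localized expansion and to show that the boundary term, whose coefficient is controlled by $\mathfrak{D}(p)>1$, dominates the area and Dirichlet terms and drives $I\to-\infty$. First I would use the very definition \eqref{testfunction}: since $|K|e^{\tilde{\varphi}_{\mu,p}}=e^{\varphi_{\mu,p}}$ and $h\,e^{\tilde{\varphi}_{\mu,p}/2}=\mathfrak{D}\,e^{\varphi_{\mu,p}/2}$, the functional \eqref{functional} becomes
\[
  I(\tilde{\varphi}_{\mu,p})=\int_\Sigma\Big(\tfrac12|\nabla\tilde{\varphi}_{\mu,p}|^2+2\tilde{K}\,\tilde{\varphi}_{\mu,p}+2e^{\varphi_{\mu,p}}\Big)-4\oint_{\partial\Sigma}\mathfrak{D}\,e^{\varphi_{\mu,p}/2}.
\]
Because $K\in C^1$ with $K<0$, the factor $\log|K|$ is bounded with bounded gradient, so replacing $\nabla\tilde{\varphi}_{\mu,p}$ by $\nabla\varphi_{\mu,p}$ and discarding the (constant) $\tilde{K}$-term changes the energy only by lower-order amounts; the whole problem thus reduces to the asymptotics of three integrals built from $\varphi_{\mu,p}$.

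Next I would pass to isothermal coordinates $(s,t)$ centred at $p$, with $\{t\ge0\}$ corresponding to $\Sigma$, $p=(0,0)$ and $q=(0,-q_2)$, normalising the conformal factor to $1$ at $p$ so that $d(x,q)$ and the volume/length elements agree with their Euclidean values up to factors $1+O(|x-p|)$. Writing $\lambda=1/\mu$, $t_0=q_2$ and $a^2=t_0^2-\lambda^2$, the hypothesis $\mu\to1/q_2$ becomes $\lambda\to t_0^-$, i.e. $a\to0^+$, while $e^{\varphi_{\mu,p}/2}=2\lambda/(d^2(x,q)-\lambda^2)$ and $e^{\varphi_{\mu,p}}=4\lambda^2/(d^2(x,q)-\lambda^2)^2$. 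Away from a fixed ball $B_p(\delta)$ these integrands converge to fixed integrable functions and contribute $O(1)$, so the divergence comes entirely from $B_p(\delta)$. There, using $d^2(x,q)-\lambda^2\simeq s^2+2t_0 t+a^2$ and the anisotropic rescaling $s=a\,u$, $t=\tfrac{a^2}{2t_0}w$ (so $w\ge0$), together with $\mathfrak{D}(x)=\mathfrak{D}(p)+O(|s|)$ on the boundary, I would obtain the leading orders
\[
  \oint_{\partial\Sigma}\mathfrak{D}\,e^{\varphi_{\mu,p}/2}=\mathfrak{D}(p)\,\frac{2\pi\lambda}{a}+O\!\big(\log\tfrac1a\big),\qquad \int_\Sigma e^{\varphi_{\mu,p}}=\frac{2\pi\lambda^2}{t_0\,a}+O\!\big(\log\tfrac1a\big).
\]
Since $|\nabla\varphi_{\mu,p}|^2=16\,d^2/(d^2-\lambda^2)^2$ concentrates at $d\simeq\lambda$, the same scaling gives $\tfrac12\int_\Sigma|\nabla\varphi_{\mu,p}|^2=2\int_\Sigma e^{\varphi_{\mu,p}}+O(\log\tfrac1a)$, the radial tail being only logarithmic.

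Combining these expansions I would reach
\[
  I(\tilde{\varphi}_{\mu,p})=\frac{8\pi\lambda}{a}\Big(\frac{\lambda}{t_0}-\mathfrak{D}(p)\Big)+O\!\big(\log\tfrac1a\big),
\]
and since $\lambda/t_0\to1<\mathfrak{D}(p)$ while $\lambda/a\to+\infty$, the bracket stays negative and bounded away from $0$, so $I(\tilde{\varphi}_{\mu,p})\to-\infty$. The second claim $\oint_{\partial\Sigma}e^{\tilde{\varphi}_{\mu,p}/2}\to+\infty$ is then immediate, since $e^{\tilde{\varphi}_{\mu,p}/2}=e^{\varphi_{\mu,p}/2}/\sqrt{|K|}$ with $1/\sqrt{|K|}\ge c>0$ and $\oint e^{\varphi_{\mu,p}/2}\sim 2\pi\lambda/a\to+\infty$.

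The main obstacle I anticipate is the bookkeeping of the error terms rather than the leading behaviour: one must verify that the conformal factor, the discrepancy between geodesic and Euclidean distance, the cross term $\int_\Sigma\nabla\varphi_{\mu,p}\cdot\nabla\log|K|$, and the Taylor remainders of $\mathfrak{D}$ and $K$ near $p$ all contribute at most $O(\log\frac1a)$, hence are dominated by the $1/a$ divergence that carries the decisive factor $\tfrac{\lambda}{t_0}-\mathfrak{D}(p)\to 1-\mathfrak{D}(p)<0$. This sign, which is exactly where the assumption $\mathfrak{D}(p)>1$ enters, is the heart of the statement and mirrors the mass identities \eqref{eq:beta} for the limit bubbles \eqref{bubble}.
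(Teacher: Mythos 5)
Your proposal is correct and follows essentially the same route as the paper: both expand each term of $I(\tilde{\varphi}_{\mu,p})$ in local coordinates near $p$, extract the common singular rate $\lambda/a=1/\sqrt{\mu^2q_2^2-1}$ for the Dirichlet, area and boundary integrals, and conclude from the fact that the boundary term carries the extra factor $\mathfrak{D}(p)>1$, so that the leading coefficient is a positive multiple of $1-\mathfrak{D}(p)<0$. The only differences are implementational --- the paper uses one-sided bounds (Young's inequality for the $\log|K|$ correction, $\min_{B_p(r)\cap\partial\Sigma}\mathfrak{D}$ instead of a Taylor expansion, polar rather than anisotropic rescaled coordinates) where you compute sharp asymptotics, but the leading constants and the decisive sign agree.
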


\begin{proof}

First of all, notice that

$$
I(\tilde{\varphi}_{\mu,p})=\int_{\Sigma} \left( \frac 1 2 |\nabla \tilde{\varphi}_{\mu,p}|^2 + 2 \tilde{K} \tilde{\varphi}_{\mu,p} + 2 e^{\varphi_{\mu,p}} \right) - 4 \oint_{\partial \Sigma} \mathfrak{D} e^{\varphi_{\mu,p}/2}.
$$

Letting $\varepsilon>0$, using the Young's inequality we can estimate the first term as 

$$
 \frac 1 2 \int_{\Sigma} |\nabla \tilde{\varphi}_{\mu,p}|^2 \leq  \left(\frac 1 2 +\varepsilon \right) \int_{\Sigma} |\nabla \varphi_{\mu,p}|^2 +O(1). 
$$

We consider each term of the functional $I_{\mu}$ separately, and claim that the following estimates hold:
\beq\label{testfunction1}
\int_{\Sigma} |\nabla \varphi_{\mu,p}|^2 \leq  \frac{8\pi }{\sqrt{\mu^2 q_2^2-1}} + o \left ( \frac{1 }{\sqrt{\mu^2 q_2^2-1}} \right) ,
\eeq

\beq\label{testfunction2}
\int_{\Sigma} e^{\varphi_{\mu,p}} \leq \frac{2\pi \mu q_2}{\sqrt{\mu^2 q_2^2-1}} +  o \left ( \frac{1 }{\sqrt{\mu^2 q_2^2-1}} \right) ,
\eeq

\beq\label{testfunction3}
\oint_{\partial \Sigma} \mathfrak{D} e^{\frac{\varphi_{\mu,p}}{2}} \geq \min_{B_p(r)\cap \partial \Sigma} \mathfrak{D} \frac{2\pi}{\sqrt{\mu^2 q_2^2-1}} +  o \left ( \frac{1 }{\sqrt{\mu^2 q_2^2-1}} \right) ,
\eeq

\beq\label{testfunction4}
\int_{\Sigma} \tilde{K} \, \tilde{\varphi}_{\mu,p} \leq O(1).
\eeq

From these, the assertion of Lemma \ref{l:test} follows immediately.

\

\underline{\textit{Proof of \eqref{testfunction1}}}.

\

Letting $r>q_2$, we divide the integral into two parts as follows
$$
\int_{\Sigma} |\nabla \varphi_{\mu,p}|^2 \, dV_{g} = \int_{\Sigma\setminus B_{q}(r)} |\nabla \varphi_{\mu,p}|^2 \, dV_{g} + \int_{B_{q}(r) \cap \Sigma} |\nabla \varphi_{\mu,p}|^2 \, dV_{g}.
$$

Applying the inequality $|\nabla d(x,z)^2|\leq 2 d(x,z)$, we have
$$
|\nabla \varphi_{\mu,p}(x)| = 2\mu^2 \frac{|\nabla d^2(x,q) |}{\mu^2 d^2(x,q)-1} \leq 4\mu^2 \frac{ d(x,q) }{\mu^2 d^2(x,q)-1}  \qquad \mbox{for every $ x\in \Sigma $}.
$$

By the use of the last inequality, normal coordinates centred at $q$ and the estimate

$$
\displaystyle{ dV_{g}=(1+o_{r}(1)) \, dx, \quad d(q,x)= |x-q|, \mbox{ for $x\in B_{q}(r)\cap \Sigma $}},
$$
with $r$ sufficiently small, one finds that

$$
\int_{B_{q}(r) \cap \Sigma} |\nabla \varphi_{\mu,p}|^2 \, dV_{g}  \leq 16\mu^{4} \int_{B^+_{q}(r)} (1+o_r(1)) \frac{|x-q|^{2} \, dx}{(\mu^2 |x-q|^{2}-1)^2},
$$
where $B^+_{q}(r)=\{(x_1,x_2)\in\mathbb{R}^2: x_1^2+(x_2+q_2)^2<r^2 \mbox{ and } x_2>0 \}$.

\

Now, writing $\mu^2|x-q|^2 = 1+o_r(1)$,  we can transform the integral

\beq\label{int0}
(A) :=\int_{B^+_{q}(r)} \frac{\mu^4|x-q|^{2} }{(\mu^2 |x-q|^{2}-1)^2} \, dx=\int_{B^+_{q}(r)} \frac{\mu^2(1+o_r(1)) }{(\mu^2 |x-q|^{2}-1)^2} \, dx.
\eeq

Next, take polar coordinates $(\rho,\theta)$ with center in $(0,-q_2)$ and consider the change of variable $t=\mu^2\rho^2-1$ to get

$$
\int_{B^+_{q}(r)} \frac{\mu^2 \, dx }{(\mu^2 |x-q|^{2}-1)^2}  = 2 \int^{\pi/2}_{\arcsin(\frac{q_2}{r})} \int_{\frac{q_2}{\sin \theta}}^r \frac{\mu^2 \rho }{(\mu^2\rho^2-1)^2}  \, d \rho \, d \theta
$$
$$
= \displaystyle{ \int^{\pi/2}_{\arcsin(\frac{q_2}{r})} \int_{\frac{\mu^2 q_2^2}{\sin^2 \theta}-1}^{\mu^2r^2-1} \frac{d t \, d \theta}{t^2}  =\int^{\pi/2}_{\arcsin(\frac{q_2}{r})} \frac{\sin^2 \theta}{\mu^2 q_2^2 \sin^2 \theta-\sin^2 \theta}  \, d\theta + O(1) }
$$
\beq\label{int1}
= \frac{\mu q_2}{\sqrt{\mu^2 q_2^2-1}} \left[ \arctan \left( \frac{\sqrt{\mu^2 q_2^2-1}}{\mu q_2} \tan \theta \right) \right]^{\pi/2}_{\arcsin(\frac{q_2}{r})} +O(1) =\frac{\mu \pi q_2}{2 \sqrt{\mu^2 q_2^2-1}} + O(1).
\eeq

\

Observe that for any $x\in\Sigma\setminus B_{q}(r)$ one has 

$$
|\nabla \varphi_{\mu,p}(x)| \leq \frac{diam(\Sigma)}{\mu^2r^2-1} \qquad \mbox{ for any } x\in\Sigma\setminus B_{q}(r).
$$

From this inequality, since $\mu r>\mu q_2 > 1$, we obtain that

\beq\label{int2}
\int_{\Sigma\setminus B_{q}(r)} |\nabla \varphi_{\mu,p}(x)|^2 = O(1).
\eeq

So, we have proved \eqref{testfunction1} by \eqref{int1} and \eqref{int2}.

\

\underline{\textit{Proof of \eqref{testfunction2}}}.

\

In order to compute the exponential terms, we divide the integral as

$$
\int_{\Sigma} e^{\varphi_{\mu,p}}=\int_{B_q(r)\cap\Sigma} e^{\varphi_{\mu,p}} + \int_{\Sigma\setminus B_q(r)} e^{\varphi_{\mu,p}}.
$$

We focus on the first integral. Again, using normal coordinates, we should compute

$$
\int_{B^+_q(r)} e^{\varphi_{\mu,p}} \, dx = 4 \int_{B^+_q(r)} \frac{ \mu^2  \, dx }{(\mu^2|x-q|^2-1)^2}=4 \, (A),$$
where $(A)$ is defined in \eqref{int0}.

Using the fact that

$$
e^{\varphi_{\mu,p}}\leq \frac{4 \mu^2}{(\mu^2r^2-1)^2} \qquad \mbox{ for any } x\in\Sigma\setminus B_{q}(r),
$$
then
\beq\label{int5}
\int_{\Sigma\setminus B_{q}(r)} e^{\varphi_{\mu,p}} = O(1).
\eeq

The estimates \eqref{int1} and \eqref{int5} conclude the proof.

\

\underline{\textit{Proof of \eqref{testfunction3}}}.

\

First, we split the integral as

$$
\oint_{\Sigma} \mathfrak{D} e^{\frac{\varphi_{\mu,p}}{2}}=\oint_{B_p(r)\cap\partial \Sigma} \mathfrak{D} e^{\frac{\varphi_{\mu,p}}{2}} + \oint_{\partial \Sigma\setminus B_p(r)} \mathfrak{D} e^{\frac{\varphi_{\mu,p}}{2}},
$$
and  focus on the first term. Next, 

$$
\oint_{B_p(r)\cap\partial \Sigma} \mathfrak{D} e^{\frac{\varphi_{\mu,p}}{2}} \geq \min_{B_p(r)\cap \partial \Sigma} \mathfrak{D}(x) \oint_{B_p(r)\cap\partial \Sigma} e^{\frac{\varphi_{\mu,p}}{2}}.
$$

Taking $r$ small enough and using normal coordinates, we compute

\beq\label{int6}
\oint_{B_p(r)\cap\partial \Sigma} e^{\frac{\varphi_{\mu,p}}{2}}=2 \oint_0^r\frac{\mu \, dx_1 }{\mu^2(x_1^2+q_2^2)-1} = \frac{\pi}{\sqrt{\mu^2q_2^2-1}} + O(1).
\eeq

In addition, we have 

$$
|e^{\frac{\varphi_{\mu,p}}{2}} | \leq O(1) \qquad \mbox{ in } \partial \Sigma\setminus B_{p}(r),
$$
so that

\beq\label{int7}
\left | \oint_{\partial \Sigma\setminus B_{p}(r)} \mathfrak{D} e^{\frac{\varphi_{\mu,p}}{2}} \right|  \leq O(1).
\eeq

The estimates \eqref{int6} and \eqref{int7} complete the proof of \eqref{testfunction3}.

\

\underline{\textit{Proof of \eqref{testfunction4}}}.

\

By the definition of $\tilde{\varphi}_{\mu,p}$, it is direct to check that 
$$
\int_{\Sigma} \tilde{K} \, \tilde{\varphi}_{\mu,p} \leq -C \int_{\Sigma} \varphi_{\mu,p} + O(1).
$$

Since
$$
\displaystyle{\log\frac{4\mu^2}{(\mu^2 q_2^2-1)^2} \leq \varphi_{\mu,p}}  \qquad \mbox{ for any } x\in \Sigma,
$$
the claim is proved.

\end{proof}

\end{document}